\documentclass{article}
\usepackage[utf8]{inputenc}
\usepackage[T1]{fontenc}
\usepackage{amsmath,amssymb}
\usepackage{graphicx}
\usepackage{amsthm, enumerate}
\theoremstyle{remark}
\usepackage[colorlinks=true]{hyperref}
\usepackage{latexsym}
\usepackage{xypic}
\usepackage{pxfonts}
\usepackage{mathrsfs}
\usepackage{fullpage}
\usepackage{wasysym}
\usepackage{enumitem}
\usepackage[affil-it]{authblk}
\usepackage[hang,small,center]{caption}
\usepackage{stmaryrd}


\theoremstyle{definition}
\newtheorem{defi}{Definition}[section]
\newtheorem{const}[defi]{Construction}
\newtheorem{expl}[defi]{Example}

\newtheorem{rmk}[defi]{Remark}
\newtheorem*{conv}{Convention}
\newtheorem*{plan}{Organization of the paper}
\newtheorem*{merci}{Acknowlegments}
\newtheorem{notat}[defi]{Notation}

\theoremstyle{plain}
\newtheorem{pro}[defi]{Proposition}
\newtheorem{thm}[defi]{Theorem}
\newtheorem{cor}[defi]{Corollary}

\newtheorem{lmm}[defi]{Lemma}
\newtheorem*{lmmA}{Lemma \setword{A}{refA}}
\newtheorem*{lmmB}{Lemma \setword{B}{refB}}
\newtheorem*{Ccl}{Conclusion}
\newtheorem*{thm2}{Theorem}

\theoremstyle{remark}

\makeatletter
\newcommand{\setword}[2]{
\phantomsection
#1\def\@currentlabel{\unexpanded{#1}}\label{#2}
}
\makeatother


\title{Delooping derived mapping spaces of bimodules over an operad}
\author{Julien Ducoulombier}
\date{}

\begin{document}
\maketitle 

\abstract \noindent 
From a map of operads $\eta:O\rightarrow O'$, we introduce a cofibrant replacement of $O$ in the category of bimodules over itself such that the corresponding model of the derived mapping space of bimodules $Bimod_{O}^{h}(O\,;\,O')$ is an algebra over the one dimensional little cubes operad $\mathcal{C}_{1}$. In the present work, we also build an explicit weak equivalence of $\mathcal{C}_{1}$-algebras from the loop space $\Omega Operad^{h}(O\,;\,O')$ to $Bimod_{O}^{h}(O\,;\,O')$.

\section*{Introduction}

The little cubes operad $\mathcal{C}_{d}$ has been introduced by May, Boardman and Vogt in order to model iterated loop spaces. Together with Stasheff \cite{Stasheff63}, see also \cite{Boardman68,Boardman73,May72,May74}, they prove the recognition principle asserting that if a space $X$ is a grouplike $\mathcal{C}_{d}$-algebra, then there exists a space $Y$ such that $X\simeq \Omega^{d}Y$. In particular, such a space is endowed with a product more or less commutative up to homotopy depending on the parameter $d$. For $d=1$, the product is only associative up to homotopy while, for $d=2$, the product is commutative up to homotopy but we don't have necessarily  "homotopies between homotopies". In other words, higher is the parameter $d$, better is the commutativity up to homotopy. One of the most important examples is the space of long embeddings compactly supported modulo immersions
\begin{equation}\label{F2}
\overline{Emb}_{c}(\mathbb{R}^{d}\,;\,\mathbb{R}^{n}):=hofib\big(\, Emb_{c}(\mathbb{R}^{d}\,;\,\mathbb{R}^{n})\longrightarrow Imm_{c}(\mathbb{R}^{d}\,;\,\mathbb{R}^{n})\,\big).
\end{equation}
For $d=1$, this space, also called the space of long knots, has been intensively study by many authors and proved to be endowed with an action of the two dimensional little cubes operad $\mathcal{C}_{2}$ by Budney \cite{Budney07}. Roughly speaking, the commutative product up to homotopy is given by the concatenation of knots while the commutativity, illustrated in Figure \ref{F1}, consists in shrinking one of the knots and move it along the line through the other one. Similarly, it is natural to expect that the space of long embeddings in dimension $d$ is equipped with an action of $\mathcal{C}_{d+1}$. 
\begin{figure}[!h]
\begin{center}
\includegraphics[scale=0.25]{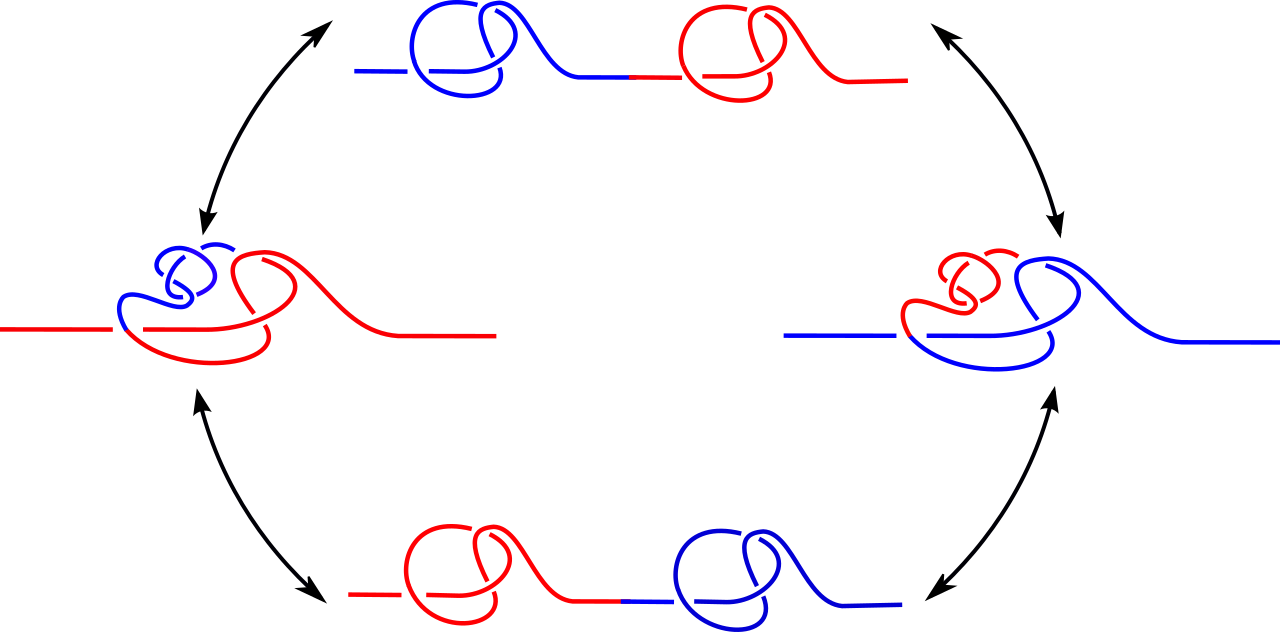}
\caption{The commutative product up to homotopy for the space of long knots.}\label{F1}\vspace{-30pt}
\end{center}
\end{figure}

\hspace{-250cm}\footnote{ETH Zurich, Ramistrasse 101, 809 \mbox{Zurich}, Switzerland}
\footnote{Key words: operads, loop spaces, space of knots, model category}
\footnote{julien.ducoulombier@math.ethz.ch}
\footnote{Homepage: http://ducoulombier-math.esy.es/}
\footnote{The author was supported by the grant ERC-2015-StG 678156 GRAPHCPX}

\newpage

However, it requires a significant amount of work to identify the iterated loop space associated to an algebra over the little cubes operad. For instance, McClure and Smith \cite{McClure04.2,McClure04} give combinatorial conditions on a cosimplicial space such that its homotopy totalization has the homotopy type of a $d$-iterated loop space but we only have a description of the loop space for $d\in\{1\,;\,2\}$ (see \cite{Dwyer12,Tourtchine10}). In the present work, we  use the category of bimodules over an operad $O$, denoted by $Bimod_{O}$, which is stronger than the notion of operad in the sense that any operad is a bimodule over itself. The main result of this paper is the following delooping theorem:

\begin{thm2}[Theorem \ref{B5}]
Let $O$ be a well pointed $\Sigma$-cofibrant operad and let $\eta:O\rightarrow O'$ be a map of operads. If the space $O'(1)$ is contractible, then one has the weak equivalence
\begin{equation}\label{F7}
Bimod^{h}_{O}(O\,;\,O')\simeq \Omega Operad^{h}(O\,;\,O').
\end{equation}
\end{thm2}

All the categories considered are model categories and the symbol $"h"$ above means that we refer to the derived mapping space. This theorem is a generalization of results obtained by Dwyer-Hess \cite{Dwyer12} and independently by Turchin \cite{Tourtchine10} in the context of non-symmetric operads and where the source object is the associative operad $\mathcal{A}s$. Let us mention that the method used to prove the above theorem is also true in the context of coloured operads. However, we focus on the uncoloured case in order to simplify the constructions and the notation.

Furthermore, we also produce a truncated version of the delooping for bimodules. Roughly speaking, $T_{k}Bimod_{O}$ and $T_{k}Operad$ are the restrictions to the operations with at most $k$ inputs (see Section \ref{G1}). The restriction functors $T_{k}Bimod_{O}\rightarrow T_{k-1}Bimod_{O}$ and $T_{k}Operad\rightarrow T_{k-1}Operad$ give rise to towers of fibrations which play an important role in understanding the manifold calculus tower associated to the space of long embeddings. We refer the reader to \cite{Arone14, Weiss99.2, Weiss99} for more details about the Goodwillie-Weiss calculus theory and its connection with the towers of fibrations. Under the conditions of Theorem \ref{B5}, we build an explicit weak equivalence between the following truncated spaces:
$$
T_{k}Bimod^{h}_{O}(T_{k}(O)\,;\,T_{k}(O'))\simeq \Omega \big(\, T_{k}Operad^{h}(T_{k}(O)\,;\,T_{k}(O'))\,\big).
$$

On the other hand, one has a little bit more information than just a weak equivalence. By using an explicit cofibrant replacement of the operad $O$ in the category $Bimod_{O}$, we are able to define a $\mathcal{C}_{1}$-algebra structure on the derived mapping space of "truncated" bimodules. We also build an explicit weak equivalence from the loop space which is also a map of $\mathcal{C}_{1}$-algebras. So, the $\mathcal{C}_{1}$-algebra structures are preserved by the weak equivalence. This kind of property is important in order to check that the structure are preserved along "zig-zag" of weak equivalences. For instance, it is not obvious that the structure introduced by Budney on the space of knots, described at the beginning, coincides with the structure of the double loop space through the identifications (\ref{F4}).\vspace{5pt}

\noindent\textbf{Applications to the Dwyer-Hess' conjecture.} To introduce the   Dwyer-Hess' conjecture, we need the notion of infinitesimal bimodule over an operad $O$, denoted by $Ibimod_{O}$, which is stronger than the notion of bimodule in the sense that any bimodule with a based point in arity $1$ is also an infinitesimal bimodule. So, the conjecture asserts that if $\eta:\mathcal{C}_{d}\rightarrow M$ is a map of bimodules over $\mathcal{C}_{d}$, then the following weak equivalences hold under the assumption $M(0)\simeq \ast$:
$$
\begin{array}{rcl}\vspace{2pt}
Ibimod^{h}_{\mathcal{C}_{d}}(\mathcal{C}_{d}\,;\,M) & \simeq  & \Omega^{d} Bimod^{h}_{\mathcal{C}_{d}}(\mathcal{C}_{d}\,;\,M),  \\ 
T_{k}Ibimod^{h}_{\mathcal{C}_{d}}(T_{k}(\mathcal{C}_{d})\,;\,T_{k}(M)) & \simeq  & \Omega^{d}\big(\, T_{k}Bimod^{h}_{\mathcal{C}_{d}}(T_{k}(\mathcal{C}_{d})\,;\,T_{k}(M))\,\big). 
\end{array} 
$$ 
This statement is proved by Boavida de Brito and Weiss \cite{Weiss15} in the special case $M=\mathcal{C}_{n}$ while the general case is proved by Turchin and the author \cite{Ducoulombier16.2} using combinatorial methods. Together with Theorem \ref{B5}, we are able to identify explicit iterated loop spaces from maps of operads. More precisely, if 
 $\eta:\mathcal{C}_{d}\rightarrow O$ is a map of operads with $O(0)\simeq O(1)\simeq \ast$, then the following weak equivalences hold:
\begin{equation}\label{F3}
\begin{array}{rcl}
Ibimod^{h}_{\mathcal{C}_{d}}(\mathcal{C}_{d}\,;\,O) & \simeq  & \Omega^{d+1} Operad^{h}(\mathcal{C}_{d}\,;\,O),  \\ 
T_{k}Ibimod^{h}_{\mathcal{C}_{d}}(T_{k}(\mathcal{C}_{d})\,;\,T_{k}(O)) & \simeq  & \Omega^{d+1}\big(\, T_{k}Operad^{h}(T_{k}(\mathcal{C}_{d})\,;\,T_{k}(O))\,\big). 
\end{array} 
\end{equation}
As an application, Arone and Turchin \cite{Arone14} develop a machinery in order to identify spaces of embeddings between two smooth manifolds with derived mapping spaces of infinitesimal bimodules. In particular, for $n-d-2>0$, the authors in \cite{Arone14} and simultaneously Turchin in \cite{Turchin13} prove that the space of long embeddings (\ref{F2}) is weakly equivalent to  $Ibimod^{h}_{\mathcal{C}_{d}}(\mathcal{C}_{d}\,;\,\mathcal{C}_{n})$. In particular, if we apply the identifications (\ref{F3}) to the map $\eta:\mathcal{C}_{d}\rightarrow \mathcal{C}_{n}$, then we get an explicit description of the iterated loop space associated to the space of long embeddings and their polynomial approximations:
\begin{equation}\label{F4}
\begin{array}{rcl}\vspace{4pt}
\overline{Emb}_{c}(\mathbb{R}^{d}\,;\,\mathbb{R}^{n}) & \simeq & \Omega^{d+1} Operad^{h}(\mathcal{C}_{d}\,;\,O), \\ 
T_{k}\overline{Emb}_{c}(\mathbb{R}^{d}\,;\,\mathbb{R}^{n}) & \simeq & \Omega^{d+1}\big(\, T_{k}Operad^{h}(T_{k}(\mathcal{C}_{d})\,;\,T_{k}(O))\,\big).
\end{array}     
\end{equation}

\vspace{5pt}

\noindent\textbf{Applications to the Swiss-Cheese operad.}
Theorem \ref{B5} is also used by the author \cite{Ducoulombier16} in order to extend the previous results to the coloured case using the Swiss-Cheese operad $\mathcal{SC}_{d}$ which is a relative version of the little cubes operad $\mathcal{C}_{d}$. In that case, a typical example of $\mathcal{SC}_{d}$-algebra is a pair of topological spaces (since the operad has two colours $S=\{o
,;\,c\}$) of the form
$$
\big( \Omega^{d}X\,\,;\,\, \Omega^{d}(X\,;\,Y):=\Omega^{d-1}(hofib(f:Y\rightarrow X))\,\big),
$$
where $f:Y\rightarrow X$ is a continuous map between pointed spaces. In particular, if $(A\,;\,B)$ is an $\mathcal{SC}_{d}$-algebra, then $A$ is a $\mathcal{C}_{d}$-algebra, $B$ is a $\mathcal{C}_{d-1}$-algebra and there is a map $\tau:A\rightarrow B$ which is more or less central up to homotopy (i.e. $\tau$ preserves the product and $\tau(a)\times b\simeq b\times \tau(a)$ with $a\in A$ and $b\in B$). In \cite{Ducoulombier16}, we give a relative version of the delooping (\ref{F7}) such that, together with Theorem \ref{B5}, we are able to identify explicit $\mathcal{SC}_{d+1}$-algebras. In particular, if $\eta_{1}:\mathcal{C}_{d}\rightarrow O$ is a map of operads and $\eta_{2}:O\rightarrow M$ is a map of bimodules over $O$, then, under technical conditions, the pair of spaces 
$$
(Ibimod^{h}_{\mathcal{C}_{d}}(\mathcal{C}_{d}\,;\,O)\,\,;\,\, Ibimod^{h}_{\mathcal{C}_{d}}(\mathcal{C}_{d}\,;\,M))
$$ 
is proved to be weakly equivalent to a typical $\mathcal{SC}_{d+1}$-algebra using the identifications
\begin{equation}\label{F8}
\begin{array}{rcl}\vspace{4pt}
Ibimod^{h}_{\mathcal{C}_{d}}(\mathcal{C}_{d}\,;\,O) & \simeq & \Omega^{d+1}Operad^{h}(\mathcal{C}_{d}\,;\,O), \\ 
Ibimod^{h}_{\mathcal{C}_{d}}(\mathcal{C}_{d}\,;\,M) & \simeq & \Omega^{d+1}\big( \,Operad^{h}(\mathcal{C}_{d}\,;\,O) \,\,;\,\, Op[\mathcal{C}_{d}\,;\,\emptyset]^{h}(\mathcal{CC}_{d}\,;\,\mathcal{L}(M)) \big), 
\end{array} 
\end{equation}
where $\mathcal{CC}_{d}$ and $\mathcal{L}(M)$ are two coloured operads described in \cite{Ducoulombier16}.

As an application, one can consider the space $(l)\text{-}Imm_{c}(\mathbb{R}^{d}\,;\,\mathbb{R}^{n})$ of $(l)$-immersions compactly supported which is the subspace of immersions $f$ such that for each subset of $l$ distinct elements  $K\subset \mathbb{R}^{d}$, the restriction $f_{|K}$ is non constant. The space of long $(l)$-immersions is defined by the following homotopy fiber:
$$
\overline{Im}m_{c}^{(l)}(\mathbb{R}^{d}\,;\,\mathbb{R}^{n}):=hofib\big(\, (l)\text{-}Imm_{c}(\mathbb{R}^{d}\,;\,\mathbb{R}^{n})\longrightarrow Imm_{c}(\mathbb{R}^{d}\,;\,\mathbb{R}^{n})\,\big).
$$
In the case $d=1$, we can easily observe that the concatenation produces a product which is only associative up to homotopy due to the condition on the cardinality of the preimage of each point. Furthermore, there is an inclusion from the space of long knots compatible with the concatenation and which is central up to homotopy as shown in the picture below. In Figure \ref{F6}, the $(3)$-immersion is represented in blue. Since the knot represented in red is injective, we can shrink the $(3)$-immersion and move it along the line through the knot.
\begin{figure}[!h]
\begin{center}
\includegraphics[scale=0.3]{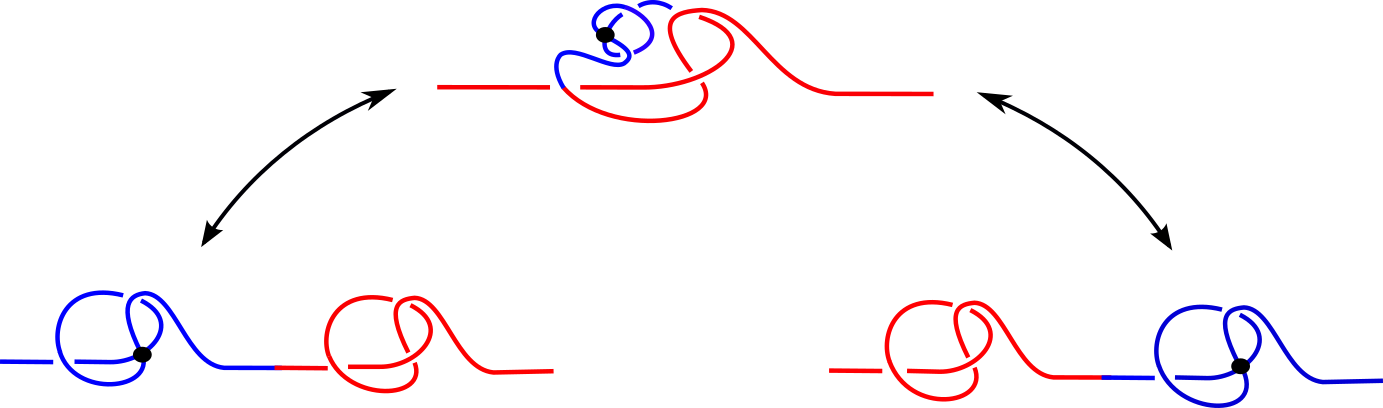}
\caption{Illustration that the inclusion is central up to homotopy.}\label{F6}\vspace{-15pt}
\end{center}
\end{figure}

\noindent It seems natural to expect that the pair of topological spaces formed by the spaces of long embeddings and the space of long $(l)$-immersions from $\mathbb{R}^{d}$ to $\mathbb{R}^{n}$ gives rise to an algebra over the Swiss-Cheese operad $\mathcal{SC}_{d+1}$. 

\noindent Similarly to the space of long embeddings, Dobrinskaya and Turchin \cite{Turchin14} prove that the space of long $(l)$-immersions can be expressed in terms of derived mapping space of infinitesimal bimodules using the non-$(l)$-overlapping little cubes bimodule $\mathcal{C}_{n}^{(l)}$ (see Example \ref{G3}). To be more precise, we only have the following weak equivalence since it is not proved that the polynomial approximation of the space of long $(l)$-immersions converges:
$$
T_{\infty}\overline{Im}m_{c}^{(l)}(\mathbb{R}^{d}\,;\,\mathbb{R}^{n})\simeq Ibimod_{\mathcal{C}_{d}}^{h}(\mathcal{C}_{d}\,;\,\mathcal{C}_{d}^{(l)}).
$$
By definition, there is an inclusion from the little cubes $\mathcal{C}_{d}$ to $\mathcal{C}_{d}^{(l)}$ compatible with the bimodule structures. Consequently, if we apply the identifications (\ref{F8}) to the maps $\eta_{1}:\mathcal{C}_{d}\rightarrow \mathcal{C}_{n}$ and $\eta_{2}:\mathcal{C}_{n}\rightarrow \mathcal{C}_{n}^{(l)}$, then the pair of topological spaces
\begin{equation*}
(\overline{Emb}_{c}(\mathbb{R}^{d},;\,\mathbb{R}^{n})\,;\,T_{\infty}\overline{Im}m_{c}^{(l)}(\mathbb{R}^{d}\,;\,\mathbb{R}^{n}))
\end{equation*}
is proved to be weakly equivalent to an explicit $\mathcal{SC}_{d+1}$-algebra.

\begin{plan}
The paper is divided into $3$ sections. The \textit{first section} introduces the notion of "truncated" operads with the example of the little cubes operad $\mathcal{C}_{d}$.  We also recall the Boardman-Vogt resolution $\mathcal{BV}(O)$ of an operad $O$ producing a functorial way to get cofibrant replacements in the category of operads. This construction is endowed with a filtration $\{\mathcal{BV}_{k}(O)[l]\}_{k\,;\,l}$ which gives rise to a tower of fibrations computing the derived mapping space $\Omega Operad^{h}(O\,;\,O')$:
$$
\xymatrix@C=15pt{
\Omega Operad(\mathcal{BV}_{1}(O)[1]\,;\,O') & \cdots \ar[l] & \Omega Operad(\mathcal{BV}_{k}(O)[l-1]\,;\,O') \ar[l] & \Omega Operad(\mathcal{BV}_{k}(O)[l]\,;\,O')\ar[l] & \cdots \ar[l]
}
$$

The \textit{second section} is devoted to the notion of bimodule over an operad $O$ with the example of the non-$(l)$-overlapping little cubes bimodule and the example of the free bimodule functor. We also introduce a cofibrant replacement $\mathcal{WB}(O)$ of the operad $O$ in the category bimodules over itself. By using the properties of this construction, we are able to define a $\mathcal{C}_{1}$-algebra structure on the mapping space $Bimod_{O}(\mathcal{WB}(O)\,;\,O')$. Furthermore, this cofibrant replacement is endowed with a filtration $\{\mathcal{WB}_{k}(O)[l]\}_{k\,;\,l}$ which gives rise to a tower of fibrations computing the derived mapping space $Bimod_{O}^{h}(O\,;\,O')$:
$$
\xymatrix@C=18pt{
Bimod_{O}(\mathcal{WB}_{1}(O)[1]\,;\,O') & \cdots \ar[l] & Bimod_{O}(\mathcal{WB}_{k}(O)[l-1]\,;\,O') \ar[l] & Bimod_{O}(\mathcal{WB}_{k}(O)[l]\,;\,O')\ar[l] & \cdots \ar[l]
}
$$

The \textit{last section} is devoted to the proof of the main theorem of the paper. First, we define the map (\ref{F7}) using the properties of the cofibrant replacement $\mathcal{WB}(O)$. The map so obtained preserves the $\mathcal{C}_{1}$-algebra structures and is compatible with the above towers of fibrations in the sense that it produces of morphism $\{\xi_{k}[l]\}$ between the two towers. So, we prove Theorem \ref{B5} by induction on the parameters $k$ and $l$. In particular, we show that the map $\{\xi_{k}[l]\}$ is a weak equivalence if an inclusion of sequences is a homotopy equivalence. Then we express the sequences in terms of functors from Reedy categories and we prove the main result using the homotopy theory of diagrams.  
\end{plan}

\begin{conv}
By a space we mean a compactly generated Hausdorff space and by abuse of notation we denote by $Top$ this category (see e.g. \cite[section 2.4]{Hovey99}). If $X$, $Y$ and $Z$ are spaces, then $Top(X;Y)$ is equipped with the compact-open topology in order to have a homeomorphism $Top(X;Top(Y;Z))\cong Top(X\times Y;Z)$. 

By using the Serre fibrations, the category $Top$ is endowed with a cofibrantly generated monoidal model  structure. In the paper the categories considered are enriched over $Top$. By convention, if $\mathcal{C}$ is a model category enriched over $Top$, then the derived mapping space $\mathcal{C}^{h}(A;B)$ is the space $\mathcal{C}(A^{c};B^{f})$ with $A^{c}$ a cofibrant replacement of $A$ and $B^{f}$ a fibrant replacement of $B$.

The category of spaces equipped with a right action of a group $G$, denoted by $G\text{-}Top$, has a model category structure coming from the adjunction $G[-]:Top\leftrightarrows G\text{-}Top:\mathcal{U}$ where $G[-]$ is the free functor sending a space $X$ to $G[X]=\coprod_{G}X$.  By convention a map in $G\text{-}Top$ is called a $G$-equivariant map whereas a cofibration in $G\text{-}Top$ is called a $G$-cofibration. In what follows, we use the following two statements which are particular cases of \cite[Lemma 2.5.3]{Berger06} and \cite[Lemma 2.5.2]{Berger06} respectively.

\begin{lmmA}\label{G6}
Let $1\rightarrow G_{1}\rightarrow G_{1}\rtimes G_{2}\rightarrow G_{2}\rightarrow 1$ be a short exact sequence of groups. Let $A\rightarrow B$ be a $G_{2}$-cofibration and $X\rightarrow Y$ be a $G_{1}\rtimes G_{2}$-equivariant $G_{1}$-cofibration. Then, the pushout product $(A\times Y)\cup_{A\times X}(B\times X)\rightarrow B\times Y$ is a $G_{1}\rtimes G_{2}$-cofibration.
\end{lmmA}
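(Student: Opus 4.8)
The plan is to run the cellular induction of cofibrantly generated model categories in the variable $A\to B$, and to reduce the one-cell computation to Frobenius reciprocity for the induction functor. Fix once and for all a $(G_1\rtimes G_2)$-equivariant $G_1$-cofibration $j\colon X\to Y$, and let $\mathcal{K}$ be the class of maps $i\colon A\to B$ of $G_2$-spaces, regarded as $(G_1\rtimes G_2)$-spaces by inflation along the projection $p\colon G_1\rtimes G_2\to G_2$, for which the pushout-product $i\,\hat{\times}\,j$ is a $(G_1\rtimes G_2)$-cofibration. Since $-\,\hat{\times}\,j$ commutes with coproducts, pushouts and transfinite compositions in its first argument, and $(G_1\rtimes G_2)$-cofibrations are stable under these and under retracts, the class $\mathcal{K}$ is saturated; hence it suffices to prove that $\mathcal{K}$ contains the generating $G_2$-cofibrations $G_2\times(S^{m-1}\hookrightarrow D^m)$. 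No reduction on $j$ is needed.

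For such a generator, write $H=G_1$ and $G=G_1\rtimes G_2$, and observe two things. First, since $G/H\cong G_2$ as $G$-sets, the inflation of a free $G_2$-space $G_2\times P$ is exactly $\mathrm{Ind}_H^G(P)$, with $P$ carrying the trivial $H$-action; applied to $P\in\{S^{m-1},D^m\}$ this identifies $A$ and $B$ with $\mathrm{Ind}_H^G(S^{m-1})$ and $\mathrm{Ind}_H^G(D^m)$. Second, the projection formula $\mathrm{Ind}_H^G(V)\times Y\cong\mathrm{Ind}_H^G\big(V\times\mathrm{Res}_H^G Y\big)$ holds naturally in $V$ and $Y$. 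Combining these for the four corners of the pushout-product square of $i=G_2\times(S^{m-1}\hookrightarrow D^m)$ with the fixed $j$, and using that $\mathrm{Ind}_H^G$ preserves colimits, identifies $i\,\hat{\times}\,j$ with $\mathrm{Ind}_H^G$ applied to the $H$-equivariant pushout-product $(S^{m-1}\hookrightarrow D^m)\,\hat{\times}\,\mathrm{Res}_H^G(j)$.

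Finally, $(S^{m-1}\hookrightarrow D^m)\,\hat{\times}\,\mathrm{Res}_H^G(j)$ is an $H$-cofibration, because $H$-cofibrations are stable under pushout-product with cofibrations of plain spaces (checked on generators, where it reduces to the monoidal model structure on $Top$), and $\mathrm{Ind}_H^G$ is a left Quillen functor: it is the left adjoint of $\mathrm{Res}_H^G$, which is right Quillen since fibrations and weak equivalences in these transferred model structures are created by the forgetful functor to $Top$, with which restriction commutes. Hence $i\,\hat{\times}\,j$ is a $G$-cofibration, completing the induction. The only step that is not pure bookkeeping is the matching of the four corners of the pushout-product square with induced spaces via the projection formula, so that is where I would be most careful; it is also exactly the specialization of \cite[Lemma~2.5.3]{Berger06} to the present situation.
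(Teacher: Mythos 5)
Your argument is correct and complete, but there is nothing in the paper to compare it against: the paper offers no proof of Lemma \ref{refA}, stating it only as a particular case of Berger--Moerdijk, Lemma 2.5.3. What you have written is in effect a self-contained reconstruction of that cited result, and each step checks out. The saturation of the class $\mathcal{K}$ under coproducts, cobase change, transfinite composition and retracts is the standard reduction to the generating free $G_{2}$-cells; the identification of the inflated cell $G_{2}\times P$ with $\mathrm{Ind}_{G_{1}}^{G_{1}\rtimes G_{2}}(P)$ uses exactly the normality of $G_{1}$ (so that $G/G_{1}\cong G_{2}$ as $G$-sets), which the short exact sequence guarantees; the projection formula $\mathrm{Ind}(V)\times Y\cong\mathrm{Ind}(V\times\mathrm{Res}\,Y)$ applied naturally to the four corners, together with the fact that the left adjoint $\mathrm{Ind}$ preserves pushouts, correctly rewrites $i\,\hat{\times}\,j$ as $\mathrm{Ind}$ of a $G_{1}$-equivariant pushout-product; and that last pushout-product is a $G_{1}$-cofibration either by your check on generators or simply by invoking the paper's Lemma \ref{refB}. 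The left Quillen property of $\mathrm{Ind}$ is immediate since fibrations and weak equivalences in both transferred structures are created in $Top$. Two minor observations: your proof never uses the splitting of the extension, only normality of $G_{1}$, so it proves the statement for an arbitrary extension $1\to G_{1}\to G\to G_{2}\to 1$; and since the paper works with right actions, the induction functor should be read as $V\mapsto V\times_{G_{1}}(G_{1}\rtimes G_{2})$, which changes nothing of substance.
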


\begin{lmmB}
Let $G$ be a group. Let $A\rightarrow B$ and $X\rightarrow Y$ be two $G$-equivariant maps which are cofibration in the category of topological spaces. If one of them is a $G$-cofibration, then the pushout product $(A\times Y)\cup_{A\times X}(B\times X)\rightarrow B\times Y$ is a $G$-cofibration. Moreover the latter is acyclic if $A\rightarrow B$ or $X\rightarrow Y$ is.\vspace{-15pt}
\end{lmmB}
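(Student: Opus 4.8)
The plan is to deduce the $G$-cofibration assertion from Lemma~\ref{refA}, and the acyclicity assertion from the pushout-product axiom of the monoidal model category $Top$ together with the fact that weak equivalences in $G\text{-}Top$ are created by the forgetful functor.

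Since the pushout-product is symmetric in its two arguments, up to the canonical $G$-homeomorphisms $A\times Y\cong Y\times A$ and so on, we may assume that $X\rightarrow Y$ is the $G$-cofibration while $A\rightarrow B$ is merely a cofibration of topological spaces. I would then apply Lemma~\ref{refA} to the short exact sequence $1\rightarrow G\rightarrow G\rightarrow 1\rightarrow 1$, that is, with $G_{1}=G$ and $G_{2}=\{1\}$: for this choice a $G_{2}$-cofibration is precisely a cofibration of topological spaces, and a $G_{1}\rtimes G_{2}$-equivariant $G_{1}$-cofibration is precisely a $G$-cofibration. The lemma then gives at once that $(A\times Y)\cup_{A\times X}(B\times X)\rightarrow B\times Y$ is a $G$-cofibration. (The two hypotheses in the statement are mutually consistent because a $G$-cofibration is automatically a cofibration of topological spaces: this is true for the generating $G$-cofibrations $G\times S^{n-1}\rightarrow G\times D^{n}$ and is preserved under pushouts, transfinite compositions and retracts.)

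For the last sentence I would use that the forgetful functor $\mathcal{U}\colon G\text{-}Top\rightarrow Top$ preserves products and all colimits, hence commutes with the formation of the pushout-product; so the map underlying $(A\times Y)\cup_{A\times X}(B\times X)\rightarrow B\times Y$ is exactly the pushout-product, computed in $Top$, of the underlying cofibrations $A\rightarrow B$ and $X\rightarrow Y$. If one of the latter is in addition a weak equivalence, the pushout-product axiom for $Top$ makes this underlying map an acyclic cofibration, in particular a weak equivalence; as weak equivalences in $G\text{-}Top$ are the maps that become weak equivalences after applying $\mathcal{U}$, the pushout-product is simultaneously a $G$-cofibration and a weak equivalence, i.e. an acyclic $G$-cofibration.

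This makes the argument essentially formal once Lemma~\ref{refA} is available, so there is no real obstacle. For the record, a direct argument would run as follows: the true content is the compatibility of the diagonal $G$-action on the relevant products with the cellular structure of a $G$-cofibration, and one handles it by first reducing $X\rightarrow Y$ to a generating $G$-cofibration $G\times S^{n-1}\rightarrow G\times D^{n}$ (legitimate, since the pushout-product with the fixed map $A\rightarrow B$, viewed as a functor of $X\rightarrow Y$, commutes with pushouts and transfinite compositions and preserves retracts, all of which preserve $G$-cofibrations), and then invoking the natural $G$-equivariant shear homeomorphism $G\times Z\cong G[\mathcal{U}(Z)]$, diagonal action on the source and free action on the target, to identify the resulting pushout-product with the image under $G[-]$ of the pushout-product, formed in $Top$, of $\mathcal{U}(A)\rightarrow\mathcal{U}(B)$ with $S^{n-1}\rightarrow D^{n}$; this last map is a cofibration of spaces by the monoidal axiom for $Top$ and $G[-]$ is left Quillen, so its image is a $G$-cofibration. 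The only slightly delicate point of that variant is checking that the shear map is $G$-equivariant and natural in $Z$, which is a routine verification.
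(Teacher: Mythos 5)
The paper does not actually prove this lemma: it is quoted as a particular case of \cite[Lemma 2.5.2]{Berger06}, so your proposal has to be measured against the standard argument given there.

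Your leading argument has a genuine gap. When you specialize Lemma \ref{refA} to $1\rightarrow G\rightarrow G\rightarrow 1\rightarrow 1$, the map $A\rightarrow B$ enters that lemma as a $G_{2}$-cofibration, i.e.\ as a map of $G_{2}$-spaces, and in the conclusion the group $G_{1}\rtimes G_{2}$ acts on the $A$- and $B$-factors of $A\times Y$, $B\times X$ and $B\times Y$ only through the projection onto $G_{2}$. For $G_{2}=\{1\}$ this means Lemma \ref{refA} speaks about the pushout product in which $G$ acts \emph{trivially} on $A$ and $B$. In Lemma \ref{refB}, however, $A\rightarrow B$ is $G$-equivariant with an arbitrary action and all products carry the diagonal action; replacing ``a $G$-equivariant map whose underlying map is a cofibration'' by ``a $\{1\}$-cofibration'' discards precisely the datum (the action on $A$ and $B$) that makes the statement nontrivial, and the map whose cofibrancy Lemma \ref{refA} then asserts is a different $G$-map from the one you must control. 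So the reduction only establishes the lemma when one of the two actions is trivial.

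The ingredient that repairs this is exactly the one you relegate to ``for the record'': reduce the $G$-cofibration to the generating cofibrations $G\times S^{n-1}\rightarrow G\times D^{n}$ (legitimate, since the pushout product against a fixed map commutes with pushouts, transfinite compositions and retracts in the other variable), then use the shear homeomorphism $(a,g)\mapsto(g^{-1}a,g)$ to turn the diagonal action into a free one and identify the resulting pushout product with $G[-]$ applied to the pushout product of the underlying cofibrations in $Top$; that map is a cofibration by the monoidal axiom and $G[-]$ is left Quillen. Promoted to the main argument, this is correct and is essentially the proof of \cite[Lemma 2.5.2]{Berger06} that the paper invokes; as written, though, your proof leads with a reduction that does not go through. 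Your treatment of the acyclicity clause --- the forgetful functor commutes with the pushout product, the underlying map is an acyclic cofibration in $Top$, and weak equivalences in $G\text{-}Top$ are created by $\mathcal{U}$ --- is fine.
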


\end{conv}

\newpage 

\section{Topological operads and the Boardman-Vogt resolution}

In what follows, we cover the notion of "truncated" operad with the example of the little cubes operad. For more details about these objects, we refer the reader to \cite{May72}. We also recall the Boardman-Vogt resolution which produces cofibrant replacements in the category of operads. Introduced by Boardman and Vogt \cite{Boardman68,Boardman73} for topological operad, this resolution has been extended to operad in any model category equipped with a notion of interval by Berger and Moerdijk \cite{Berger06}. This construction is endowed with a filtration used to define a tower of fibrations associated to the derived mapping space $\Omega Operad^{h}(O\,;\,O')$.

\subsection{Topological operad and little cubes operad}\label{G1}

In what follows, we recall the terminology related to the notion of topological operad. By a \textit{sequence} we mean a family of topological spaces $M:=\{M(n)\}$, with $n\in \mathbb{N}$, endowed with a right action of the symmetric group: for each permutation $\sigma \in \Sigma_{n}$, there is a continuous map
\begin{equation}\label{A0}
\begin{array}{rcl}
\sigma^{\ast}:M(n) & \longrightarrow & M(n); \\ 
 x & \longmapsto & x\cdot \sigma,
\end{array} 
\end{equation}
satisfying the relation $(x\cdot\sigma)\cdot \tau=x\cdot(\sigma\tau)$ with $\tau\in \Sigma_{n}$. A map between two sequences is given by a family of continuous maps compatible with the action of the symmetric group. In the rest of the paper, we denote by $Seq$ the category of sequences. 

Given an integer $k\geq 1$, we also consider the category of $k$-truncated sequences $T_{k}Seq$. The objects are family of topological spaces $M:=\{M(n)\}$, with $0\leq n\leq k$, endowed an action of the symmetric group (\ref{A0}) for $n\leq k$. A "truncated" sequence is said to be \textit{pointed} if there is a distinguished element $\ast_{1}\in M(1)$ called \textit{unit}. We denote by $Seq_{\ast}$ and $T_{k}Seq_{\ast}$ the category of pointed sequences and pointed $k$-truncated sequences respectively. One has an obvious functor  
$$
T_{k}(-):Seq \longrightarrow T_{k}Seq.
$$

The category of "truncated" sequences is endowed with a cofibrantly generated model category structure in which a map is a weak equivalence (resp. a fibration) if each of its components is a weak equivalence (resp. a Serre fibration).  Furthermore, the objects in this model category are fibrant \cite{Hirschhorn03}. Similarly, the category of pointed "truncated" sequences inherits a model category structure with the same properties. A pointed "truncated" sequence is said to be \textit{well pointed} if the inclusion from the unit to $M(1)$ is a cofibration in the category of spaces. 

\begin{defi}
An \textit{operad} is a pointed sequence $O$ together with operations called \textit{operadic compositions}
\begin{equation}\label{A1}
\circ_{i}:O(n)\times O(m)\longrightarrow O(n+m-1),\hspace{15pt} \text{with }1\leq i\leq n, 
\end{equation}
satisfying compatibility with the action of the symmetric group, associativity, commutativity and unit axioms. A map between two operads should respect the operadic compositions.  We denote by $Operad$ the categories of operads. An algebra over the operad $O$, or \textit{$O$-algebra}, is given by a topological space $X$ endowed with operations
$$
\alpha_{n}:O(n)\times X^{\times n} \longrightarrow X,
$$ 
compatible with the operadic compositions and the action of the symmetric group.

Given an integer $k\geq 1$, we also consider the category of $k$-truncated operads $T_{k}Operad$. The objects are pointed $k$-truncated sequences endowed with operadic compositions (\ref{A1}) for $n+m-1\leq k$ and $n\leq k$. One has an obvious functor
$$
T_{k}(-):Operad \longrightarrow T_{k}Operad.
$$

The category of "truncated" operads inherits a cofibrantly generated model category structure from the category of pointed "truncated" sequences using the adjunction $\mathcal{F}:Seq_{\ast}\leftrightarrows Operad:\mathcal{U}$ where $\mathcal{U}$ is the forgetful functor while $\mathcal{F}$ is the free operad functor \cite{Berger03}. More precisely, a map of "truncated" operads $f$ is a weak equivalence (resp. a fibration) if the corresponding map $\mathcal{U}(f)$ is a weak equivalence (resp. a fibration) in the category of pointed "truncated" sequences. In particular all the "truncated" operads are fibrant. According to the convention, $f$ is said to be $\Sigma$-cofibrant if $\mathcal{U}(f)$ is cofibrant in the category of sequences. 
\end{defi}

\newpage

\begin{expl}\textbf{The overlapping little cubes operad $\mathcal{C}_{d}^{\infty}$}

\noindent A $d$-dimensional little cube is a continuous map $c:[0\,,\,1]^{d}\rightarrow [0\,,\,1]^{d}$ arising from an affine embedding preserving the direction of the axes. The operad $\mathcal{C}_{d}^{\infty}$ is the sequence $\{\mathcal{C}_{d}^{\infty}(n)\}$ whose $n$-th component is given by $n$ little cubes, that is, $n$-tuples $<c_{1},\ldots,c_{n}>$ with $c_{i}$ a $d$-dimensional little cube. The unit point in $\mathcal{C}_{d}^{\infty}(1)$ is the identity little cube $id:[0\,,\,1]^{d}\rightarrow [0\,,\,1]^{d}$ whereas $\sigma\in \Sigma_{n}$ permutes the indexation as follows:
$$
\sigma^{\ast}:\mathcal{C}_{d}^{\infty}(n)\longrightarrow \mathcal{C}_{d}^{\infty}(n)\,\,\,;\,\,\, <c_{1},\ldots,c_{n}>  \longmapsto  <c_{\sigma(1)},\ldots,c_{\sigma(n)}>.
$$ 
The operadic compositions are given by the formula
$$
\begin{array}{clcl}
\circ_{i}: & \mathcal{C}_{d}^{\infty}(n)\times \mathcal{C}_{d}^{\infty}(m) & \longrightarrow & \mathcal{C}_{d}^{\infty}(n+m-1); \\ 
 &  <c_{1},\ldots,c_{n}>\,;\, <c'_{1},\ldots,c'_{m}> & \longmapsto & <c_{1},\ldots,c_{i-1}, c_{i}\circ c'_{1},\ldots, c_{i}\circ c'_{m},c_{i+1},\ldots,c_{n}>.
\end{array} 
$$
By convention $\mathcal{C}_{d}^{\infty}(0)$ is the one point topological space and the operadic composition $\circ_{i}$ with this point consists in forgetting the $i$-th little cube.
\end{expl}

\begin{expl}\label{a8}\textbf{The little cubes operad $\mathcal{C}_{d}$}\\
\noindent The $d$-dimensional little cubes operad $\mathcal{C}_{d}$ is the 
sub-operad of $\mathcal{C}_{d}^{\infty}$ whose $n$-th component is the configuration space of $n$ little cubes with disjoint interiors. In other words, $\mathcal{C}_{d}(n)$ is the subspace of $\mathcal{C}_{d}^{\infty}(n)$  formed by configurations $<c_{1},\ldots,c_{n}>$ satisfying the relation
\begin{equation}\label{a1}
Int(Im(c_{i}))\cap Int(Im(c_{j}))=\emptyset, \hspace{15pt} \forall i\neq j.
\end{equation}
The operadic compositions and the action of the symmetric group arise from the operad $\mathcal{C}_{d}^{\infty}$. Furthermore, if  $(X\,;\,\ast)$ is a pointed topological space, then the $d$-iterated loop space $\Omega^{d}X$ is an example of $\mathcal{C}_{d}$-algebra.\vspace{-5pt}
\end{expl}

\begin{figure}[!h]
\begin{center}
\includegraphics[scale=0.27]{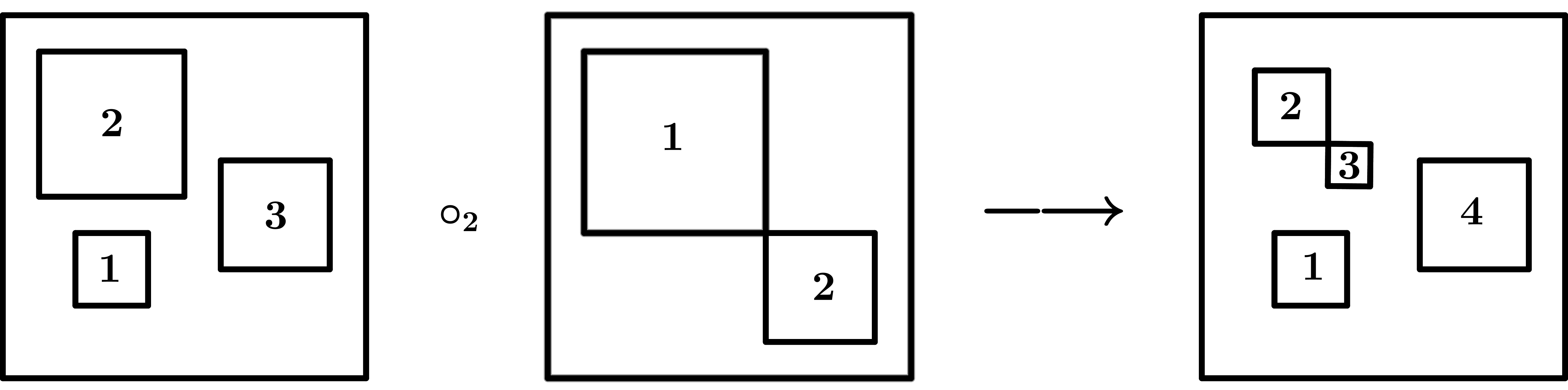}\vspace{-5pt}
\caption{The operadic composition $\circ_{2}:\mathcal{C}_{2}(3)\times \mathcal{C}_{2}(2)\rightarrow \mathcal{C}_{2}(4)$.}\vspace{-25pt}
\end{center}
\end{figure}

\subsection{The Boardman-Vogt resolution and cofibrant replacement}

As explained in the previous subsection, all the "truncated" operads are fibrant. Consequently, in order to compute derived mapping space of "truncated" operads, we only need a cofibrant replacement of the source object. The Boardman-Vogt resolution provides a functorial way to build such cofibrant replacements and has been intensively studied by Boardman-Vogt \cite{Boardman68,Boardman73} and Berger-Moerdijk \cite{Berger06}. This is a classical construction that requires the language of trees. For that reason, one has to fix some notation.

\begin{defi}\label{C9}
A \textit{planar tree} $T$ is a finite planar tree with one output edge on the bottom and inputs edges on the top. The vertex connected to the output edge, denoted by $r$, is called the \textit{root} of $T$. Such an element is endowed with an orientation from top to bottom. According to the orientation of the tree, if $e$ is an edge, then its vertex $t(e)$ towards the trunk is called \textit{the target vertex} whereas the other vertex $s(e)$ is called \textit{the source vertex}. By convention, the input and output edges are half-plan: the inputs edges do not have source vertices while the ouput edge do not have target vertex. Let $T$ be a planar tree:
\begin{itemize}[leftmargin=*]
\item[$\blacktriangleright$] The set of its vertices and the set of its edges are denoted by $V(T)$ and $E(T)$ respectively. The set of its internal edges $E^{int}(T)$ is formed by the edges connecting two vertices. Each edge is joined to the trunk by a unique path composed of edges.   
\item[$\blacktriangleright$] The input edges are called leaves and they are ordered from left to right. Let $in(T):=\{l_{1},\ldots, l_{|T|}\}$ denote the ordered set of leaves with $|T|$ the number of leaves. 
\item[$\blacktriangleright$]  The set of incoming edges of a vertex $v$ is ordered from left to right. This set is denoted by $in(v):=\{e_{1}(v),\ldots, e_{|v|}(v)\}$ with $|v|$ the number of incoming edges. The unique output edge of $v$ is denoted by $e_{0}(v)$.
\item[$\blacktriangleright$] The vertices with no incoming edge are called \textit{univalent vertices} (i.e. whose valence is $1$) while the vertices with only one input are called \textit{bivalent vertices} (i.e. whose valence is $2$).\vspace{-10pt}
\end{itemize}

\newpage

\begin{figure}[!h]
\begin{center}
\includegraphics[scale=0.25]{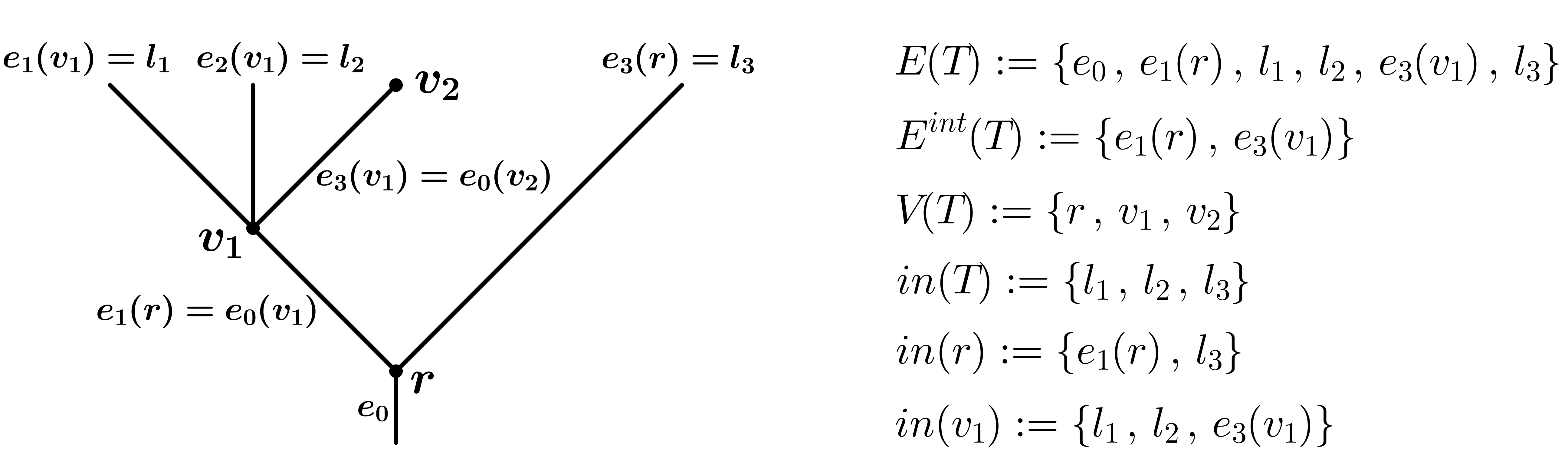}\vspace{-5pt}
\caption{Example of a planar tree.}\vspace{-15pt}
\end{center}
\end{figure}

The \textit{automorphism group} $Aut(T)$ can be described by induction on the number of vertices. If $|V(T)|=1$, then $Aut(T)$ is the group $\Sigma_{|T|}$. Otherwise, up to non-planar isomorphism, $T$ is of the form
\begin{equation}\label{B2}
T=t_{n}(T_{1}^{1},\ldots,T_{n_{1}}^{1},T_{1}^{2},\ldots,T_{n_{2}}^{2},\ldots, T_{1}^{l},\ldots,T_{n_{l}}^{l}), 
\end{equation}
where $t_{n}$ is a $n$-corolla, the trees $T_{1}^{i},\ldots,T_{n_{i}}^{i}$ are the same tree $T^{i}$ and $T^{i}$ is not isomorphic to $T^{j}$ if $i\neq j$. Since $\Sigma_{n_{i}}$ acts on the product $Aut(T^{i})^{\times n_{i}}$ by permuting the factors, the automorphism group of $T$ is the semi-direct product
\begin{equation}\label{B0}
Aut(T)\cong \big(\, Aut(T^{1})^{\times n_{1}}\times \cdots \times Aut(T^{l})^{\times n_{l}}\big)\rtimes \big(\Sigma_{n_{1}}\times \cdots \Sigma_{n_{l}}\big):= \Gamma_{T}\rtimes \Sigma_{T}.
\end{equation}

A tree is a pair $(T\,;\,\sigma)$ where $T$ is a planar tree and $\sigma:\{1,\ldots, |T|\}\rightarrow in(T)$ is a bijection labelling the leaves of $T$. Such an element will be denoted by $T$ if there is no ambiguity about the bijection $\sigma$. We denote by \textbf{tree} the set of trees.  The bijection $\sigma$ can be interpreted as an element in $\Sigma_{|T|}$. By abuse of notation, a tree $T=(T\,;\,\sigma)$ is said to be planar if $\sigma$ is the identity permutation.  
\end{defi}

\begin{const}\label{e7}
From an operad $O$, we build the operad $\mathcal{BV}(O)$. The points are equivalent classes $[T\,;\,\{t_{e}\}\,;\,\{a_{v}\}]$ where $T$ is a tree, $\{a_{v}\}_{v\in V(T)}$ is a family of points in $O$ labelling the vertices of $T$ and $\{t_{e}\}_{e\in E^{int}(T)}$ is a family of real numbers in the interval $[0\,,\,1]$ indexing the inner edges. In other words, $\mathcal{BV}(O)$ is the quotient of the coproduct
$$
\left.
\underset{T\in \,\text{\textbf{tree}}}{\coprod} \,\,\underset{v\in V(T)}{\prod}\,O(|v|) \,\,\times \,\,\underset{e\in\, E^{int}(T)}{\prod}\, [0\,,\,1]\,\,
\right/\!\sim\,\,
$$
where the equivalence relation is generated by the following axioms: 
\begin{itemize}[itemsep=-10pt, topsep=3pt, leftmargin=*]
\item[$i)$] If a vertex is labelled by the unit $\ast_{1}\in O(1)$, then one has locally the identifications

\begin{center}
\includegraphics[scale=0.27]{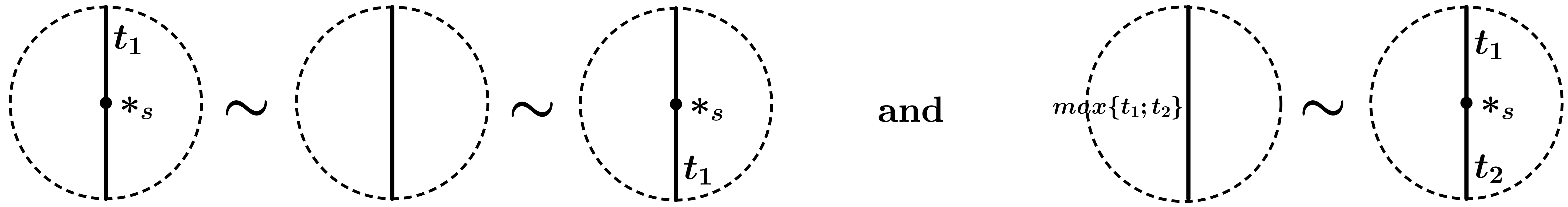}
\end{center}\vspace{11pt}

\item[$ii)$] If a vertex is labelled by $a\cdot \sigma$, with $\sigma\in \Sigma_{|v|}$, then 
\begin{center}
\includegraphics[scale=0.35]{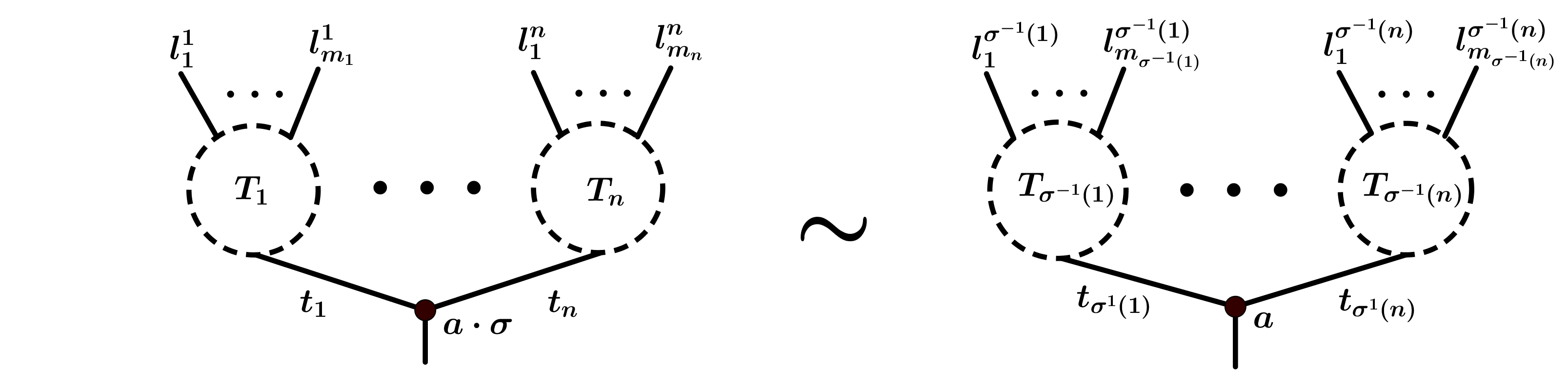}\vspace{10pt}
\end{center}

\item[$iii)$] If an inner edge is indexed by $0$, then we contract it by using the operadic structure of $O$.
\begin{figure}[!h]
\begin{center}
\includegraphics[scale=0.25]{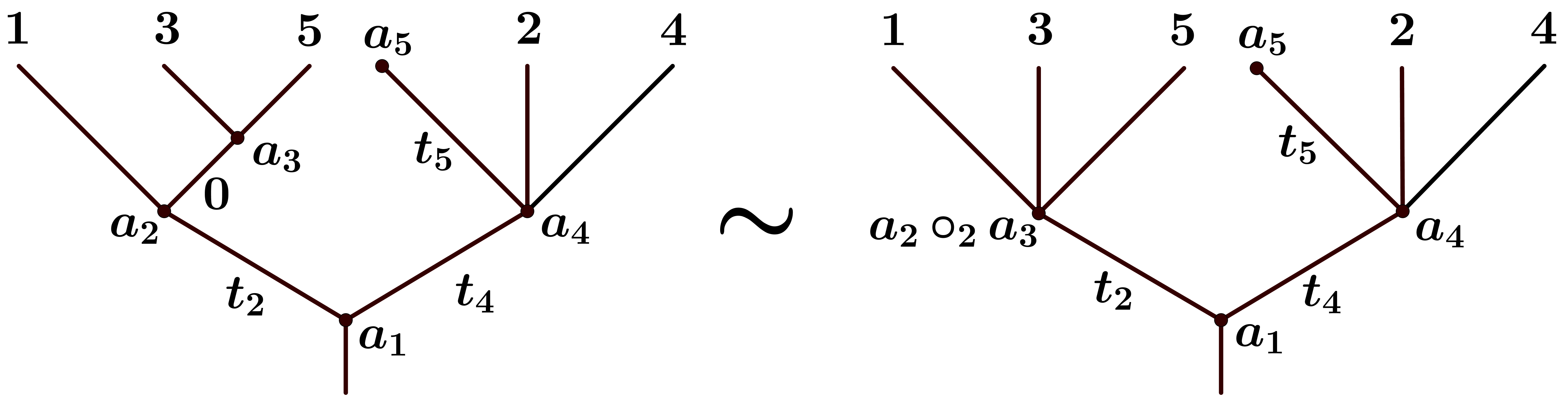}\vspace{-5pt}
\caption{Illustration of the relation $(iii)$.}\vspace{-40pt}
\end{center}
\end{figure}
\end{itemize}

\newpage

\noindent Let $[T\,;\,\{t_{e}\}\,;\,\{a_{v}\}]$ be a point in $\mathcal{BV}(O)(n)$ and $[T'\,;\,\{t'_{e}\}\,;\,\{a'_{v}\}]$ be a point in $\mathcal{BV}(O)(m)$. The operadic composition $[T\,;\,\{t_{e}\}\,;\,\{a_{v}\}]\circ_{i}[T'\,;\,\{t'_{e}\}\,;\,\{a'_{v}\}]$ consists in grafting $T'$ to the $i$-th incoming input of $T$ and indexing the new inner edge by $1$. Furthermore, there is a map of pointed sequences
\begin{equation}\label{B8}
\iota:O\longrightarrow \mathcal{BV}(O)\,\,;\,\,a\longmapsto [t_{|a|}\,;\,\emptyset\,;\,\{a\}],
\end{equation} 
sending a point $a$ to the corolla labelled by $a$. There is also the following map of operads sending the real numbers indexing the inner edges to $0$:
\begin{equation}\label{d8}
\mu:\mathcal{BV}(O)\rightarrow O\,\,;\,\, [T\,;\,\{t_{e}\}\,;\,\{a_{v}\}] \mapsto [T\,;\,\{0_{e}\}\,;\,\{a_{v}\}].
\end{equation}
\end{const}

From now on, we introduce a filtration of the resolution $\mathcal{BV}(O)$ according to the number of \textit{geometrical inputs} which is the number of leaves plus the number of univalent vertices. A point in $\mathcal{BV}(O)$ is said to be \textit{prime} if the real numbers indexing the set of inner edges are strictly smaller than $1$. Besides, a point is said to be \textit{composite} if one of its inner edges is indexed by $1$ and such a point can be decomposed into prime components. More precisely, the prime components of a point indexed by a planar tree are obtained by cutting the edges labelled by $1$ as shown in Figure \ref{G2}. Otherwise, the prime components of a point of the form $[(T\,;\,\sigma)\,;\,\{t_{e}\}\,;\,\{a_{v}\}]$, with $\sigma\neq id$, coincide with the prime components of $[(T\,;\,id)\,;\,\{t_{e}\}\,;\,\{a_{v}\}]$. 

\begin{figure}[!h]
\begin{center}
\includegraphics[scale=0.27]{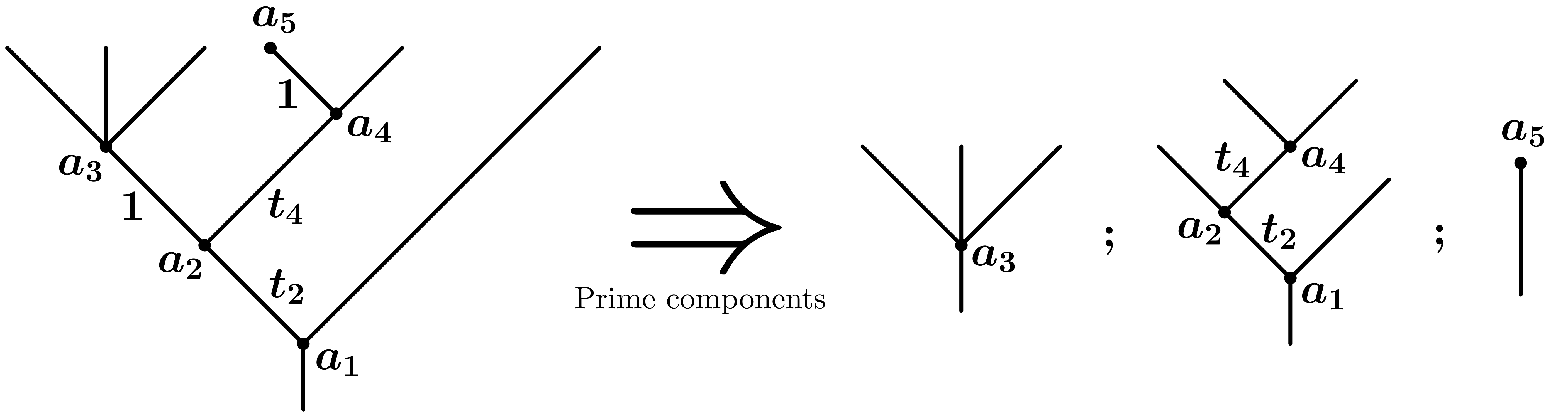}\vspace{-5pt}
\caption{Illustration of a composite point together with its prime components.}\label{G2}\vspace{-10pt}
\end{center}
\end{figure}

\noindent A prime point is in the $k$-th filtration term $\mathcal{BV}_{k}(O)$ if the number of its geometrical inputs is at most $k$. Then, a composite point is in the $k$-th filtration term if its prime components are in $\mathcal{BV}_{k}(O)$. For instance, the composite point in Figure \ref{G2} is an element in the filtration term $\mathcal{BV}_{4}(O)$. By convention, $\mathcal{BV}_{0}(O)$ is the initial object in the category of operads. For each $k\geq 0$, $\mathcal{BV}_{k}(O)$ is an operad and the family $\{\mathcal{BV}_{k}(O)\}$ produces the following filtration of $\mathcal{BV}(O)$: 
\begin{equation}\label{G4}
\xymatrix{
\mathcal{BV}_{0}(O)\ar[r] & \mathcal{BV}_{1}(O) \ar[r] &  \cdots \ar[r] & \mathcal{BV}_{k-1}(O) \ar[r] & \mathcal{BV}_{k}(O) \ar[r] & \cdots \ar[r] & \mathcal{BV}(O).
}
\end{equation}

\begin{thm}\label{I0}{\cite{Berger03,Vogt03}}
Assume that $O$ is a well pointed $\Sigma$-cofibrant operad. The objects $\mathcal{BV}(O)$ and $T_{k}(\mathcal{BV}_{k}(O))$ are cofibrant replacements of $O$ and $T_{k}(O)$ in the categories $Operad$ and $T_{k}Operad$ respectively. In particular, the map (\ref{d8}) is a weak equivalence. 
\end{thm}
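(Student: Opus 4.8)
The plan is to treat the two assertions in turn: that $\mu$ of (\ref{d8}) is a weak equivalence, and that $\mathcal{BV}(O)$ (resp.\ $T_{k}(\mathcal{BV}_{k}(O))$) is cofibrant. Since $\mu\circ\iota=\mathrm{id}_{O}$ holds by construction, for the first point it is enough to exhibit a homotopy from $\mathrm{id}_{\mathcal{BV}(O)}$ to $\iota\circ\mu$, and the natural candidate is the one that uniformly rescales the lengths of all inner edges,
\[
H\colon\mathcal{BV}(O)\times[0\,,\,1]\longrightarrow\mathcal{BV}(O),\qquad H\big([T;\{t_{e}\};\{a_{v}\}],s\big)=[T;\{s\cdot t_{e}\};\{a_{v}\}].
\]
One checks that $H$ is compatible with the relations $(i)$--$(iii)$, so it is well defined and continuous; it is the identity at $s=1$, while at $s=0$ all inner edges are indexed by $0$ and relation $(iii)$ collapses the tree to the corolla labelled by the iterated operadic composition of the $a_{v}$ along $T$, which is precisely $\iota\big(\mu([T;\{t_{e}\};\{a_{v}\}])\big)$. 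Moreover $H$ fixes $\iota(O)$ pointwise, so each $\mathcal{BV}(O)(n)$ strongly deformation retracts onto $\iota(O(n))\cong O(n)$ and $\mu$ is a weak equivalence of pointed sequences, hence of operads.

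For the cofibrancy of $\mathcal{BV}(O)$ I would use the filtration (\ref{G4}). As $\mathcal{BV}_{0}(O)$ is the initial operad and $\mathcal{BV}(O)=\mathrm{colim}_{k}\mathcal{BV}_{k}(O)$, it suffices to show that each $\mathcal{BV}_{k-1}(O)\to\mathcal{BV}_{k}(O)$ is a cofibration in $Operad$. To analyse the $k$-th step, one further stratifies the \emph{new prime points} at stage $k$ --- those indexed by a tree with exactly $k$ geometrical inputs and all inner edges of length $<1$ --- by the number of vertices of the underlying tree. A new prime point on a tree $T$ with $N$ vertices is a point of the labelling space
\[
\mathcal{X}(T):=\prod_{v\in V(T)}O(|v|)\ \times\ \prod_{e\in E^{int}(T)}[0\,,\,1),
\]
carrying the action of $Aut(T)\cong\Gamma_{T}\rtimes\Sigma_{T}$; its identifications with the earlier strata are concentrated on the subspace $\partial\mathcal{X}(T)$ where some inner edge has length $0$ (collapsed via $(iii)$ to a tree with $N-1$ vertices and the same number of geometrical inputs) or some bivalent vertex is labelled by $\ast_{1}$ (absorbed via $(i)$), and composite points need no separate treatment since by definition they are recovered from their lower-complexity prime components through the operad structure. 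Each elementary step is then the pushout in $Operad$ of the free operad functor $\mathcal{F}$ applied to a map of sequences assembled (induced up along $Aut(T)\to\Sigma_{|T|}$) from the inclusions $\partial\mathcal{X}(T)\hookrightarrow\mathcal{X}(T)$, so it remains to see that each of these is an $Aut(T)$-cofibration. This is where the hypotheses enter: $\mathcal{X}(T)$ is an iterated pushout product of the cofibration $\ast_{1}\to O(1)$ (well-pointedness), of the cofibrant $\Sigma_{|v|}$-spaces $O(|v|)$ ($\Sigma$-cofibrancy), and of the cofibration $\{0\}\hookrightarrow[0\,,\,1)$; repeated use of Lemma \ref{refB} produces the successive pushout products as cofibrations and Lemma \ref{refA} upgrades them across the wreath-type extension $\Gamma_{T}\rtimes\Sigma_{T}$, yielding the desired equivariant cofibration. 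The genuinely delicate point --- the main obstacle --- is the combinatorial bookkeeping behind this colimit presentation: one must check that relations $(i)$--$(iii)$ reduce every non-new point of $\mathcal{BV}_{k}(O)$ either to an earlier stratum or to an operadic composite of prime points of strictly smaller complexity, so that at each step the cell attached is exactly the pair $(\mathcal{X}(T),\partial\mathcal{X}(T))$ and the pushout along $\mathcal{F}$ really does reconstruct $\mathcal{BV}_{k}(O)$; granting this, the pushout product verification via Lemmas \ref{refA}--\ref{refB} is routine.

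For the truncated statement one runs the same construction in arities $\leq k$. Here one uses the observation that every tree occurring in $\mathcal{BV}_{k}(O)$ has all its vertices of valence at most $k$ --- each incoming edge of a vertex carries at least one geometrical input --- so that the whole tower $\mathcal{BV}_{0}(O)\to\mathcal{BV}_{1}(O)\to\cdots\to\mathcal{BV}_{k}(O)$ lives inside $T_{k}Operad$ after applying $T_{k}$, exhibiting $T_{k}(\mathcal{BV}_{k}(O))$ as cofibrant there (it is a finite composite of the cofibrations $T_{k}(\mathcal{BV}_{j-1}(O))\to T_{k}(\mathcal{BV}_{j}(O))$ starting from the initial $k$-truncated operad). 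It remains to see that $T_{k}(\mathcal{BV}_{k}(O))\to T_{k}(O)$ is a weak equivalence; for this one compares with the previous point by noting that $T_{k}(\mathcal{BV}_{k}(O))\to T_{k}(\mathcal{BV}(O))$ is a weak equivalence. Indeed, in each arity $n\leq k$ the cells of $\mathcal{BV}(O)$ not already in $\mathcal{BV}_{k}(O)$ are indexed by trees with more than $n$ geometrical inputs, hence carrying at least one univalent vertex and (for $n\geq 1$) at least one inner edge; collapsing these inner edges deformation retracts each such cell onto a piece of its boundary, so the corresponding attachments are weak equivalences and, being a sequential colimit of such, so is $\mathcal{BV}_{k}(O)(n)\to\mathcal{BV}(O)(n)$. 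Composing with the weak equivalence $T_{k}(\mu)$ coming from the first part then finishes the proof.
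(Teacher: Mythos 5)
The paper does not actually prove Theorem \ref{I0}: it quotes it from \cite{Berger03,Vogt03}, and only recalls in Section \ref{F5} the skeleton of the cofibrancy argument (the double filtration by geometrical inputs and by number of vertices, the pushouts of $\mathcal{F}(\partial\tilde{X}_{k}[l])\rightarrow\mathcal{F}(\tilde{X}_{k}[l])$, and the $\Sigma$-cofibrancy of $\partial X_{k}[l]\rightarrow X_{k}[l]$ via Lemmas \ref{refA} and \ref{refB}). Your treatment of the untruncated statement follows exactly this standard route, and the rescaling homotopy $t_{e}\mapsto s\cdot t_{e}$ is indeed the classical proof that the map (\ref{d8}) is a weak equivalence. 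One correction is needed in the cell structure, though: you take the half-open cube $\prod_{e}[0\,,\,1)$ with a boundary consisting only of the loci $t_{e}=0$ and of unit-labelled bivalent vertices. The correct cell is the closed cube, and the boundary must also contain the faces $t_{e}=1$: these are the composite points, and the attaching map sends them to earlier filtration stages by decomposing into prime components. This is why the paper's $\partial X_{k}[l]$ is defined with inner edges indexed by $0$ \emph{or} $1$; with the open cube the generating cofibration does not encode these identifications and the pushout does not reconstruct $\mathcal{BV}_{k}(O)[l]$.

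The genuine gap is in the truncated statement. The rescaling homotopy does not preserve $\mathcal{BV}_{k}(O)$: shrinking a length-$1$ edge merges two prime components into a single prime point whose number of geometrical inputs is in general strictly larger. Already for $k=n=2$, a chain of two binary vertices whose last input is capped by an arity-$0$ vertex, with both inner edges of length $1$, lies in $\mathcal{BV}_{2}(O)(2)$ (its prime components have $2$, $2$ and $1$ geometrical inputs), but its rescaling is a prime point with $3$ geometrical inputs. So the weak equivalence $T_{k}(\mathcal{BV}_{k}(O))\rightarrow T_{k}(O)$ does not follow from the untruncated one by restriction, and your fallback --- that the extra cells of $\mathcal{BV}(O)(n)$, $n\leq k$, ``deformation retract onto a piece of their boundary, so the corresponding attachments are weak equivalences'' --- is not valid reasoning: a cell that retracts onto a proper subspace of its boundary does not make the pushout along the full boundary inclusion acyclic, and these relative cells are genuinely non-acyclic. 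For instance a binary root with two arity-$0$ caps contributes, in arity $0$, the pair $\big(O(2)\times O(0)^{2}\times[0\,,\,1]^{2}\,,\,O(2)\times O(0)^{2}\times\partial([0\,,\,1]^{2})\big)$, whose inclusion is not a weak equivalence. The truncated case therefore requires either a homotopy adapted to the filtration or a different inductive scheme; this is precisely the content being borrowed from \cite{Berger03,Vogt03} (see also the truncated resolutions of \cite{Ducoulombier16.2}), and as written this part of your argument does not go through.
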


From a $k$-truncated operad $O_{k}$, we consider the $k$-free operad $\mathcal{F}_{k}(O_{k})$ whose $k$ first components coincide with $O_{k}$. The functor $\mathcal{F}_{k}$ is left adjoint to the truncated functor $T_{k}$ and it can be expressed as a quotient of the free operad functor in which the equivalence relation is generated by the following axiom: if the sum of the incoming edges of any two consecutive vertices connected by an inner edge $e$ is smaller than $k+1$, then we contract $e$ using $k$-truncated operadic structure of $O_{k}$. In our case, we can easily check that $\mathcal{F}_{k}(T_{k}(\mathcal{BV}_{k}(O)))=\mathcal{BV}_{k}(O)$ since $\mathcal{BV}_{k}(O)$ is the sub-operad of $\mathcal{BV}(O)$ generated by its $k$ first components. Consequently, from this adjunction and Theorem \ref{I0}, we deduce the following identifications:
$$
T_{k}Operad^{h}(T_{k}(O)\,,\,T_{k}(O'))\cong  T_{k}Operad(T_{k}(\mathcal{BV}_{k}(O))\,,\,T_{k}(O')) \cong  Operad(\mathcal{BV}_{k}(O)\,,\,O').
$$  

\subsection{The tower of fibrations associated to the space $\Omega Operad^{h}(O\,;\,O')$}\label{F5}

In the previous subsection, we introduce a filtration of the Boardman-Vogt resolution $\mathcal{BV}(O)$ according to the number of geometrical inputs in order to get a cofibrant replacement of the truncated operad $T_{k}(O)$. Unfortunately, this is not enough to prove the main theorem of the paper and we need a refinement of the filtration (\ref{G4}). Indeed, for each inclusion $\mathcal{BV}_{k-1}(O)\rightarrow \mathcal{BV}_{k}(O)$, there is another filtration according to the number of vertices: 
\begin{equation}\label{G5}
\xymatrix@C=18pt{
\mathcal{BV}_{k-1}(O)\ar[r] & \mathcal{BV}_{k}(O)[1] \ar[r] &  \cdots \ar[r] & \mathcal{BV}_{k}(O)[l-1] \ar[r] & \mathcal{BV}_{k}(O)[l] \ar[r] & \cdots \ar[r] & \mathcal{BV}_{k}(O).
}\vspace{-20pt}
\end{equation}
\newpage

\noindent More precisely, a prime point in $\mathcal{BV}(O)$ is said to be in the filtration term $\mathcal{BV}_{k}(O)[l]$ if it has at most $k-1$ geometrical inputs or it has exactly $k$ geometrical inputs and at most $l$ vertices. Similarly, a composite point is in the filtration term $\mathcal{BV}_{k}(O)[l]$ if its prime components are in $\mathcal{BV}_{k}(O)[l]$. For instance, the composite point in Figure \ref{G2} is in $\mathcal{BV}_{4}(O)[3]$. By construction, each sequence $\mathcal{BV}_{k}(O)[l]$ is an operad.

In what follows, we recall the proof that the inclusion from $\mathcal{BV}_{k}(O)[l-1]$ to $\mathcal{BV}_{k}(O)[l]$ is a cofibration and we introduce a tower of fibrations associated to the loop space $\Omega Operad^{h}(O\,;\,O')$. For this purpose, let \textbf{tree[k\,;\,l]} be the set of trees having exactly $k$ geometrical inputs and $l$ vertices. The sequence $X_{k}[l]$ is defined by the quotient of the coproduct 
$$
\left.
\underset{T\in \,\textbf{tree[k\,;\,l]}}{\coprod} \,\,\underset{v\in V(T)}{\prod}\,O(|v|) \,\,\times \,\,\underset{e\in\, E^{int}(T)}{\prod}\, [0\,,\,1]\,\,
\right/\!\sim
$$ 
where the equivalence relation is generated by the axiom $(ii)$ of Construction \ref{e7}. The boundary sequence $\partial X_{k}[l]$ is formed by points in $X_{k}[l]$ having a bivalent vertex labelled by the unit of the operad $O$ or having an inner edge indexed by $0$ or $1$. For $(k\,;\,l)\neq (1\,;\,1)$,  $X_{k}[l]$ and $\partial X_{k}[l]$ are not pointed sequences. In order to use the free operad functor, we consider the pointed sequences $\tilde{X}_{k}[l]$ and $\partial \tilde{X}_{k}[l]$ obtained by adding a based point in arity $1$:
$$
\tilde{X}_{k}[l](n):=
\left\{
\begin{array}{ll}\vspace{4pt}
X_{k}[l](1)\sqcup \ast_{1} & \text{if } n=1, \\ 
X_{k}[l](n) & \text{otherwise},
\end{array} 
\right.
\hspace{15pt}\text{and}\hspace{15pt}
\partial\tilde{X}_{k}[l](n):=
\left\{
\begin{array}{ll}\vspace{4pt}
\partial X_{k}[l](1)\sqcup \ast_{1} & \text{if } n=1, \\ 
\partial X_{k}[l](n) & \text{otherwise}.
\end{array} 
\right.
$$ 

\noindent Then, one has the following pushout diagrams where $\mathcal{F}$ is the free operad functor from pointed sequences to operads and the left vertical maps consists in forgetting the vertices labelled by the unit, contracting the inner edges indexed by $0$ and taking the inclusion for elements having an edge indexed by $1$:
$$
\xymatrix@R=17pt{
\mathcal{F}(\mathcal{BV}_{0}(O)) \ar[r] \ar@{=}[d] & \mathcal{F}(X_{1}[1]) \ar@{=}[d] \\
\mathcal{BV}_{0}(O) \ar[r] & \mathcal{BV}_{1}(O)[1]
}
\hspace{15pt}\text{and}\hspace{15pt}
\xymatrix@R=17pt{
\mathcal{F}(\partial \tilde{X}_{k}[l]) \ar[r] \ar[d] & \mathcal{F}(\tilde{X}_{k}[l]) \ar[d] \\
\mathcal{BV}_{k}(O)[l-1] \ar[r] & \mathcal{BV}_{k}(O)[l]
}
$$

Similarly to the proof of \cite[Theorem 2.12]{Ducoulombier16} and \cite[Theorem 5.1]{Berger06}, we can check that the inclusion from $\partial X_{k}[l]$ to $X_{k}[l]$ is a $\Sigma$-cofibration. As a consequence, the horizontal maps of the above diagrams as well as the inclusion from $\mathcal{BV}_{k-1}(O)$ to $\mathcal{BV}_{k}(O)$ are cofibrations in the category of operads. Furthermore, the filtration (\ref{G5}) gives rise to a tower of fibrations computing the loop space $\Omega Operad(\mathcal{BV}(O)\,;\, O')$. Let $\partial' X_{k}[l]$ be the pushout product 
$$
\partial X_{k}[l]\times [0\,,\,1] \underset{\partial X_{k}[l]\times \{0\,,\,1\}}{\coprod} X_{k}[l] \times \{0\,,\,1\}.
$$ 
Due to Lemma \ref{refA}, the map from the pushout product  to $X_{k}[l]\times [0\,,\,1]$ is a $\Sigma$-cofibration and the vertical maps of the following pullback diagram are fibrations:
$$
\xymatrix@R=17pt{
\Omega Operad(\mathcal{BV}_{k}(O)[l]\,;\, O') \ar[r] \ar[d] & Seq(X_{k}[l]\times [0\,,\,1]\,;\, O')\ar[d] \\
\Omega Operad(\mathcal{BV}_{k}(O)[l-1]\,;\, O') \ar[r] & Seq( \partial' X_{k}[l] \,;\, O')
}
$$
Furthermore, if $g\in \Omega Operad(\mathcal{BV}_{k}(O)[l-1]\,;\, O')$, then the fiber over $g$ is homeomorphic to the mapping space of sequences from $X_{k}[l]\times [0\,,\,1]$ to $O'$ such that the restriction to the sub-sequence $\partial'X_{k}[l]$ coincides with the map induced by $g$:
\begin{equation}\label{C6}
Seq^{g}\big(\,(X_{k}[l]\times [0\,,\,1]\,,\, \partial'X_{k}[l])\,;\, O'\,\big).
\end{equation}

Finally, we give a description of the space $\Omega Operad(\mathcal{BV}_{1}(O)[1]\,;\, O')$. By using the equality $\mathcal{BV}_{1}(O)[1]=\mathcal{F}(X_{1}[1])$ together with the adjunction between operads and pointed sequences, we deduce that a point in the loop space is given by a pair of continuous maps
\begin{equation}\label{D7}
f_{0}:O(0)\times [0\,,\,1]\longrightarrow O'(0) 
\hspace{15pt}\text{and}\hspace{15pt}
f_{1}:O(1)\times [0\,,\,1]\longrightarrow O'(1),
\end{equation}
satisfying the relations 
$$f_{1}(\ast_{1}\,;\,t)=\ast_{1}',\hspace{15pt} f_{1}(x\,;\,0)=f_{1}(x\,;\,1)= \eta(x)\hspace{15pt} \text{and}\hspace{15pt} f_{0}(x\,;\,0)=f_{0}(x\,;\,1)= \eta(x)$$ where $\ast_{1}$ and $\ast_{1}'$ are the units of the operads $O$ and $O'$ respectively.

\section{Bimodule and cofibrant replacements}

This section is devoted to the category of bimodules over an operad with the example of the non-$(l)$-overlapping little cubes bimodule as well as a short presentation of the free bimodule functor $\mathcal{F}_{B}$. Then, we introduce an explicit cofibrant replacement $\mathcal{WB}(O)$ of the operad $O$ in the category $Bimod_{O}$ endowed with a filtration compatible with the tower of fibrations introduced in Section \ref{F5}. By using the properties of this cofibrant replacement, we define a $\mathcal{C}_{1}$-algebra structure on the space $Bimod_{O}(\mathcal{WB}(O)\,;\,O')$ and  its truncated versions.

\subsection{Bimodules over an operad and the free bimodule functor}

From now on, $O$ is a topological operad. A bimodule over $O$, also called $O$-\textit{bimodule}, is given by a sequence $M\in Seq$ endowed with operations 
\begin{equation}\label{A2}
\begin{array}{llr}\vspace{7pt}
\gamma_{r}: & M(n)\times O(m_{1})\times \cdots\times O(m_{n})  \longrightarrow  M(m_{1}+\cdots + m_{n}), & \text{right operations},\\ \vspace{7pt}
\gamma_{l}: & O(n)\times M(m_{1})\times \cdots\times M(m_{n})  \longrightarrow  M(m_{1}+\cdots + m_{n}),& \text{left operations},
\end{array}
\end{equation}
satisfying compatibility with the action of the symmetric group, associativity and unity axioms (see \cite{Arone14}). In particular, there is a continuous map $\gamma_{0}:O(0)\rightarrow M(0)$ in arity $0$. A map between $O$-bimodules should respect the operations. We denote by $Bimod_{O}$ the category of $O$-bimodules. Thanks to the unit in $O(1)$, the right operations $\gamma_{r}$ can equivalently be defined as a family of continuous maps
$$
\circ^{i}:M(n)\times O(m)\longrightarrow M(n+m-1),\hspace{15pt}\text{with }1\leq i\leq n.
$$

Given an integer $k\geq 0$, we also consider the category of $k$-truncated bimodules $T_{k}Bimod_{O}$. An object is a $k$-truncated sequence endowed with a bimodule structure (\ref{A2}) for $m_{1}+\cdots + m_{n}\leq k$ (and $n\leq k$ for $\gamma_{r}$). One has an obvious functor 
$$
T_{k}(-):Bimod_{O}\longrightarrow T_{k}Bimod_{O}.
$$
For the rest of the paper, we use the following notation: 
$$
\begin{array}{ll}\vspace{7pt}
x\circ^{i}y=\circ^{i}(x\,;\,y) & \text{for } x\in M(n) \text{ and } y\in O(m),  \\ 
x(y_{1},\ldots,y_{n})=\gamma_{l}(x\,;\,y_{1}\,;\ldots;\,y_{n}) & \text{for } x\in O(n) \,\,\text{ and } y_{i}\in M(m_{i}).
\end{array} 
$$

\begin{expl}\label{G3}\textbf{The non-$(l)$-overlapping little cubes bimodule $\mathcal{C}_{d}^{(l)}$}\\
The $d$-dimensional non-$(l)$-overlapping little cubes bimodule $\mathcal{C}_{d}^{(l)}$ has been introduced by Dobrinskaya and Turchin in \cite{Turchin14}. The space $\mathcal{C}_{d}^{(l)}(n)$ is the subspace of $ \mathcal{C}_{d}^{\infty}(n)$ formed by configurations of $n$ little cubes $<c_{1},\ldots,c_{n}>$ satisfying the following relation:
\begin{equation}\label{i3}
\forall\, i_{1}< \cdots < i_{l} \in \{1,\ldots, n\}, \hspace{15pt} \underset{1\leq j\leq l}{\bigcap}\, Int(Im(c_{i_{j}}))=\emptyset.
\end{equation}
In particular, $\mathcal{C}_{d}^{(2)}$ coincides with the little cubes operad $\mathcal{C}_{d}$. The action of the symmetric group and the bimodule structure over the little cubes operad $\mathcal{C}_{d}$ arise from the operadic structure of $\mathcal{C}_{d}^{\infty}$. Let us notice that the non-$(l)$-overlapping little cubes bimodule, with $l>2$, is not an operad since the operadic compositions of  $\mathcal{C}_{d}^{\infty}$ don't necessarily preserve the condition (\ref{i3}).
\begin{figure}[!h]
\begin{center}
\includegraphics[scale=0.27]{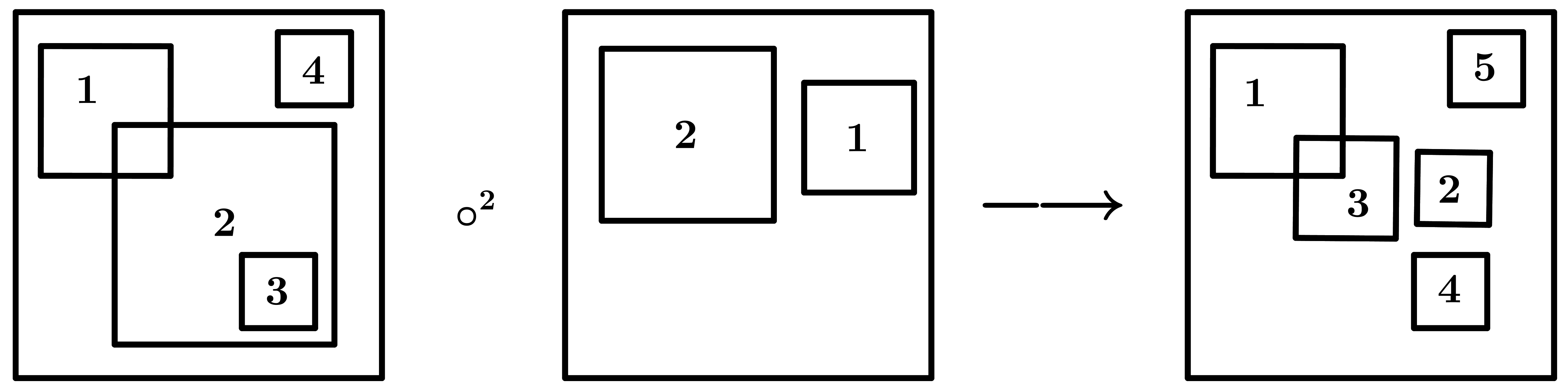}
\caption{The right operation $\circ^{2}:\mathcal{C}_{2}^{(3)}(4) \times \mathcal{C}_{2}(2)\rightarrow \mathcal{C}_{2}^{(3)}(5)$.}\vspace{-30pt}
\end{center}
\end{figure}
\end{expl}

\newpage

\begin{expl}\label{d1}
Let $\eta:O\rightarrow O'$ be a  map of operads. In that case, the map $\eta$ is also  a bimodule map over $O$ and the bimodule structure on $O'$ is defined as follows: 
$$
\begin{array}{lrll}\vspace{4pt}
\gamma_{r}: & O'(n)\times O(m_{1})\times \cdots\times O(m_{n})  & \longrightarrow &  O'(m_{1}+\cdots + m_{n}); \\\vspace{7pt}
& (x\,;\,y_{1},\ldots,y_{n})&\longmapsto & (\cdots(x\circ_{n}\eta(y_{n}))\cdots)\circ_{1}\eta(y_{1})),\\ \vspace{4pt}
\gamma_{l}: &  O(n)\times O'(m_{1})\times \cdots\times O'(m_{n})  & \longrightarrow &  O'(m_{1}+\cdots +m_{n});\\
& (x\,;\,y_{1},\ldots,y_{n})&\longmapsto & (\cdots(\eta(x)\circ_{n}y_{n})\cdots)\circ_{1}y_{1}.
\end{array} 
$$
\end{expl}

\begin{expl}\label{G8}\textbf{The free bimodule functor $\mathcal{F}_{B}$}\\
We recall quickly a presentation of the free bimodule functor introduced by the author in \cite{Ducoulombier16, Ducoulombier14}. Since we have to deal with the symmetric group action and the map $\gamma :O(0)\rightarrow M(0)$, we define an adjunction between the category of bimodules over $O$ and the under category $O_{0}\downarrow Seq$ where $O_{0}$ is the sequence given by $O_{0}(0)=O(0)$ and the empty set otherwise:
\begin{equation}\label{F9}
\mathcal{F}_{B}:O_{0}\downarrow Seq \leftrightarrows Bimod_{O}:\mathcal{U}.
\end{equation}

The functor $\mathcal{F}_{B}$ is defined using the set of reduced trees with section $rstree$ whose elements are pairs $(T\,;\,V^{p}(T))$ with $T$ a tree and $V^{p}(T)$ a subset of vertices, called the \textit{set of pearls}, usually represented by a white vertex (see Figure \ref{G7}). Such a pair has to satisfy two conditions: $i)$ each path connecting a leaf to the trunk passes through a unique pearl $ii)$ two consecutive vertices cannot belong both to the set $V(T)\setminus V^{p}(T)$. From a sequence $M\in O_{0}\downarrow Seq$, the bimodule $\mathcal{F}_{B}(M)$ consists in  labelling the pearl by points in $M$ and the other vertices by points in the operad $O$ as illustrated in Figure \ref{G7}. More precisely, $\mathcal{F}_{B}(M)$ is the quotient of the coproduct
$$
\left.\coprod\limits_{T\in rstree} \,\prod\limits_{v\in V^{p}(T)}\, M(|v|) \times \prod\limits_{v\in V(T)\setminus V^{p}(T)}\, O(|v|)\right/\sim
$$
where the equivalence relation is generated by the compatibility with the symmetric group action, the compatibility with the unit of the operad $O$ and the compatibility with the map $\gamma:O(0)\rightarrow M(0)$ (we refer the reader to \cite{Ducoulombier16}). 

If $a\in O(n)$ and $\{x_{i}\}$ is a family of points in $\mathcal{F}_{B}(M)(m_{i})$, then the left module operations is defined as follows: each tree of the family is grafted to a leaf of the $n$-corolla indexed by $a$ from left to right. The inner edges obtained are contracted if their source are not pearl by using the operadic structure of $O$. Similarly, if $x\in\mathcal{F}_{B}(M)(m)$, then the composition $x\circ^{i}a$ consists in grafting the corolla labelled by $a$ to the $i$-th incoming edge of $T$. Then, we contract the inner edge so obtained if its target is not a pearl by using the operadic structure of $O$.
\begin{figure}[!h]
\begin{center}
\includegraphics[scale=0.4]{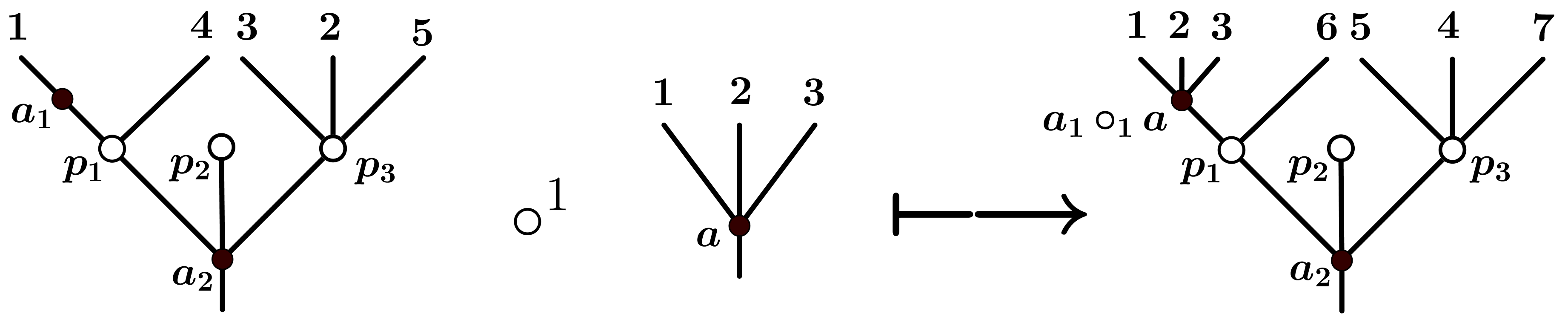}
\caption{Illustration of the right operation $\circ^{1}:\mathcal{F}_{B}(M)(5)\times O(3)\rightarrow \mathcal{F}_{B}(M)(7)$.}\label{G7}\vspace{-10pt}
\end{center}
\end{figure}
\end{expl}

The category $O_{0}\downarrow Seq$ inherits a cofibrantly generated model category structure from the category of sequences. By using the adjunction (\ref{F9}), we deduce that the category of "truncated" bimodule over $O$ is equipped with a cofibrantly generated model category structure in which a map $f$ is a weak equivalence (resp. a fibration) if the corresponding map $\mathcal{U}(f)$ is a weak equivalence (resp. a fibration) in the category of "truncated" sequences. Similarly to the operadic case, all the objects are fibrant and we only need a cofibrant replacement of $O$ in the category of $O$-bimodule in order to compute the derived mapping space $Bimod^{h}_{O}(O\,;\,O')$.

\begin{rmk}
The model category structure so defined in the category of "truncated" bimodules coincides with the usually model category structure considered by Fresse in \cite{Fresse09} that one gets from the adjunction between "truncated" sequences and "truncated" bimodules. Essentially because the classical adjunction can be factorized as follows:
$$
Seq\leftrightarrows O_{0}\downarrow Seq \leftrightarrows Bimod_{O}.
$$ 
\end{rmk}

\subsection{An alternative cofibrant replacement for bimodules coming from operads}\label{C5}

In \cite{Ducoulombier16}, we introduce a functorial way to build a cofibrant replacement in the category of bimodules over an operad $O$. Unfortunately, in the special case where the bimodule is the operad $O$ itself, this resolution forgets all the information coming from the operadic structure and we cannot expect to get an explicit $\mathcal{C}_{1}$-algebra structure on the derived mapping space of bimodules. For this reason, we consider an alternative cofibrant resolution $\mathcal{WB}(O)$ using the operadic structure of $O$ and such that the space $Bimod_{O}(\mathcal{WB}(O)\,;\,O')$ is an algebra over $\mathcal{C}_{1}$. Let us mention that this construction is inspired by  constructions in \cite{Tourtchine10} in the context of non-symmetric operad and in the particular case $O=\mathcal{A}s$.

\begin{const}\label{C0}
From an operad $O$, we build an $O$-bimodule $\mathcal{WB}(O)$. The points are equivalent classes $[T\,;\,\{t_{v}\}\,;\,\{x_{v}\}]$ where $T$ is a tree, $\{t_{v}\}$ is a family of real numbers in the interval $[0\,,\,1]$ indexing the vertices and $\{x_{v}\}$ is a family of points in $\mathcal{BV}(O)$ labelling the vertices. Furthermore, if $e$ is an inner edge of $T$, then one has $t_{s(e)}\geq t_{t(e)}$. In other words, $\mathcal{WB}(O)$ is the quotient of the sub-sequence
\begin{equation}\label{B9}
\left.
\underset{T\in \textbf{tree}}{\coprod}\,\,\,\underset{v\in V(T)}{\prod}\, \mathcal{BV}(O)(|v|)\times [0\,,\,1]\,
\right/ \sim
\end{equation}
coming from the restriction on the families of real numbers $\{t_{v}\}$. The equivalence relation is generated by the following axioms where $\iota:O\rightarrow \mathcal{BV}(O)$ and $\mu:\mathcal{BV}(O) \rightarrow O$ are the maps (\ref{B8}) and (\ref{d8}) respectively:

\begin{itemize}[itemsep=-10pt, topsep=3pt, leftmargin=*]
\item[$i)$] If a vertex is labelled by the unit $\iota(\ast_{1})$, with $\ast_{1}\in O(1)$, then  one has locally the identification

\begin{center}
\includegraphics[scale=0.27]{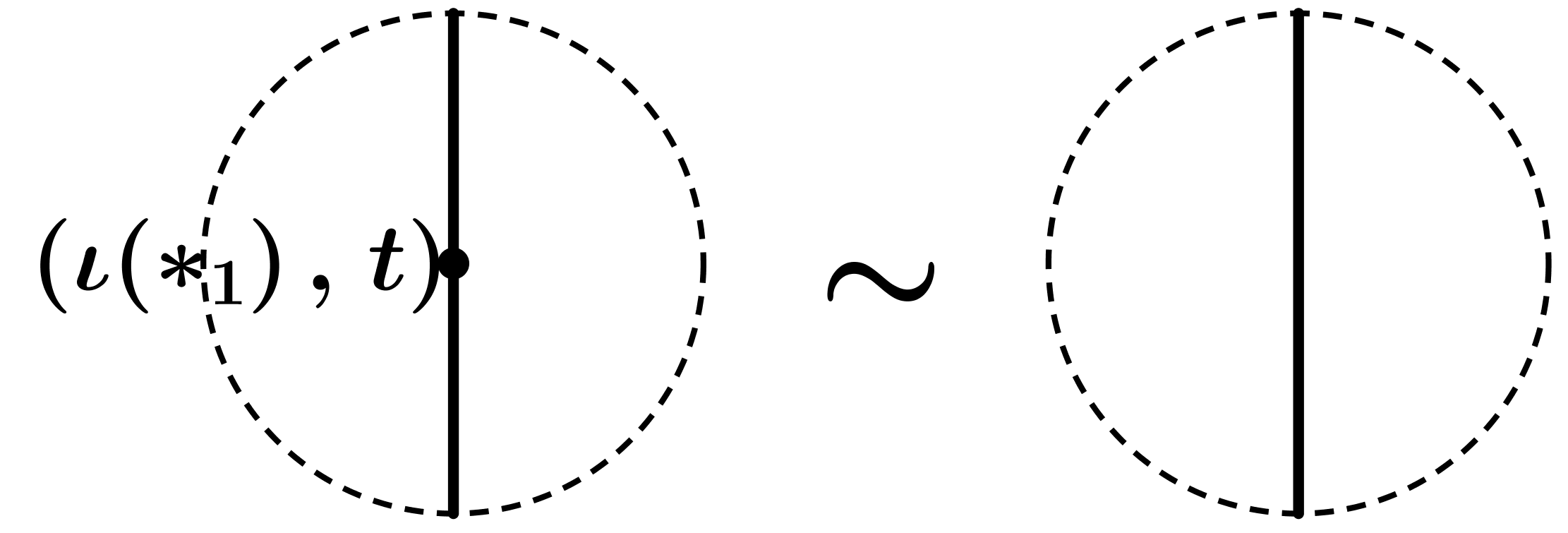}
\end{center}\vspace{7pt}

\item[$ii)$] If a vertex is labelled by $x\cdot \sigma$, with $x\in\mathcal{BV}(O)$ and $\sigma\in \Sigma_{|v|}$, then 
\begin{center}
\includegraphics[scale=0.35]{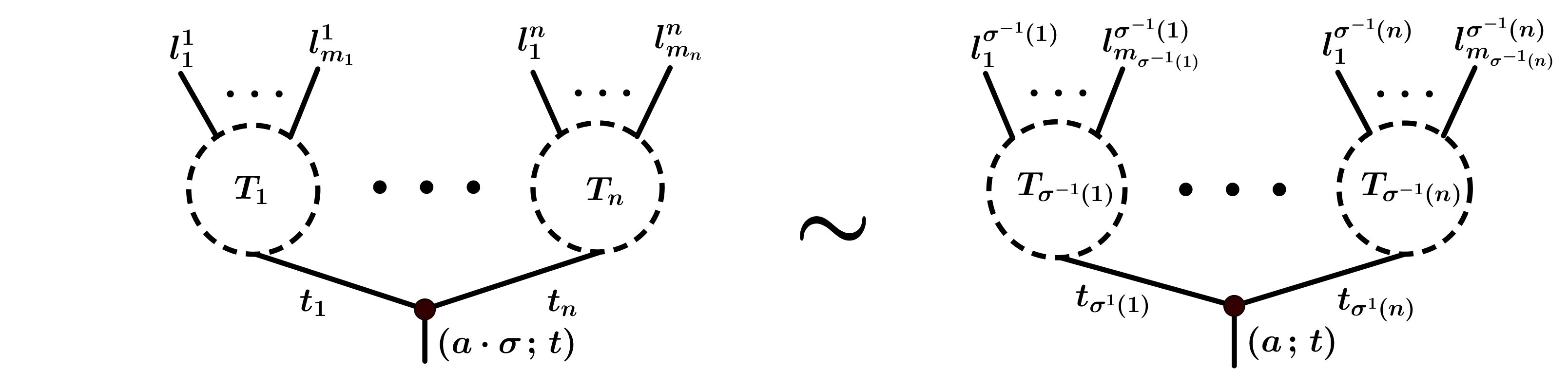}
\end{center}\vspace{7pt}

\item[$iii)$] If two consecutive vertices, connected by an edge $e$, are indexed by the same real number $t\in [0\,,\,1]$, then the two vertices are identified by contracting $e$ using the operadic structure of $\mathcal{BV}(O)$. The vertex so obtained is indexed by $t$.
\begin{figure}[!h]
\begin{center}
\includegraphics[scale=0.34]{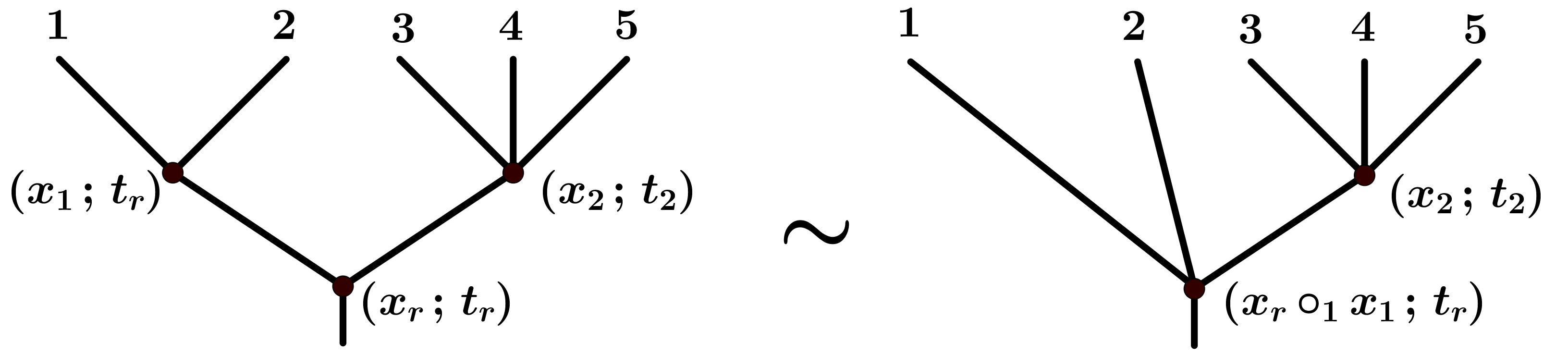}
\caption{Illustration of the relation $(iii)$.}\label{C3}
\end{center}
\end{figure}

\item[$iv)$] If a vertex is labelled by $(x\,;\,\xi)$, with $x\in \mathcal{BV}(O)$ and $\xi\in \{0\,,\,1\}$, then $(x\,;\,\xi)$ is identified with $(\iota\circ\mu(x)\,;\,\xi)$.
\begin{figure}[!h]
\begin{center}
\includegraphics[scale=0.08]{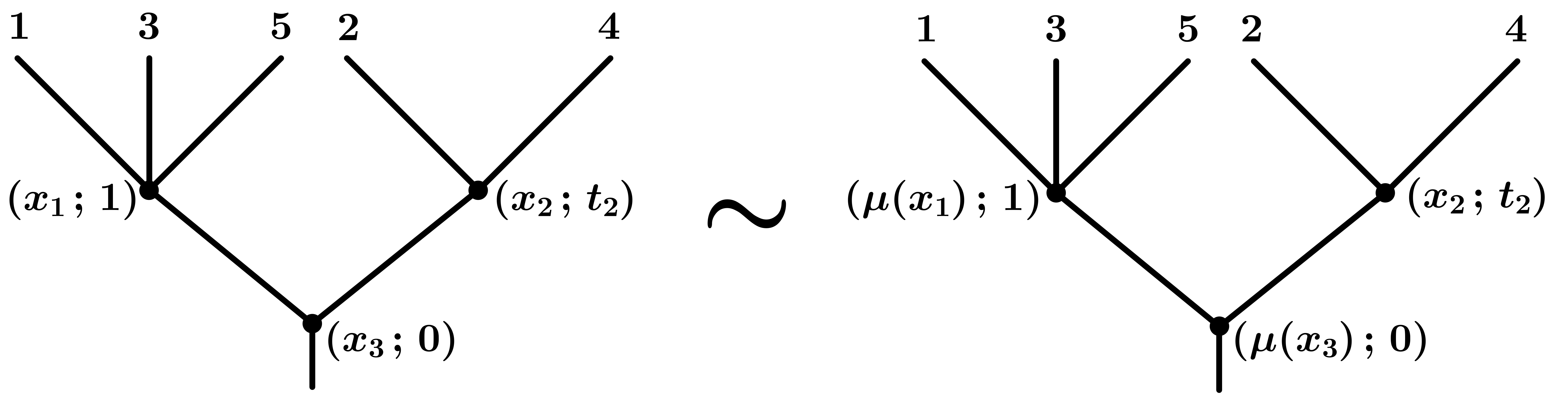}
\caption{Illustration of the relation $(iv)$.}\vspace{-25pt}
\end{center}
\end{figure}
\end{itemize}

\newpage

The map $\gamma:O(0)\rightarrow \mathcal{WB}(O)$ sends a point $a_0$ to the corolla indexed by the pair $(\iota(a_0)\,;\,0)$. Furthermore, if $[T\,;\,\{t_{v}\}\,;\,\{x_{v}\}]\in \mathcal{WB}(O)(n)$ and $a\in O(m)$, then the right operation $[T\,;\,\{t_{v}\}\,;\,\{x_{v}\}]\circ^i a$ consists in grafting the $m$-corolla indexed by the pair $(\iota(a)\,;\,1)$ to the $i$-th incoming edge of $T$. Similarly, if $[T^i\,;\,\{t^i_{v}\}\,;\,\{x^i_{v}\}]$ is a family of points in $\mathcal{WB}(O)$, then left operation $a([T^1\,;\,\{t^1_{v}\}\,;\,\{x^1_{v}\}],\ldots, [T^m\,;\,\{t^m_{v}\}\,;\,\{x^m_{v}\}])$ is defined as follows: each tree of the family is grafted from the left to right to a leaf of the $m$-corolla indexed by the pair $(\iota(a)\,;\,0)$.
\begin{figure}[!h]
\begin{center}
\includegraphics[scale=0.3]{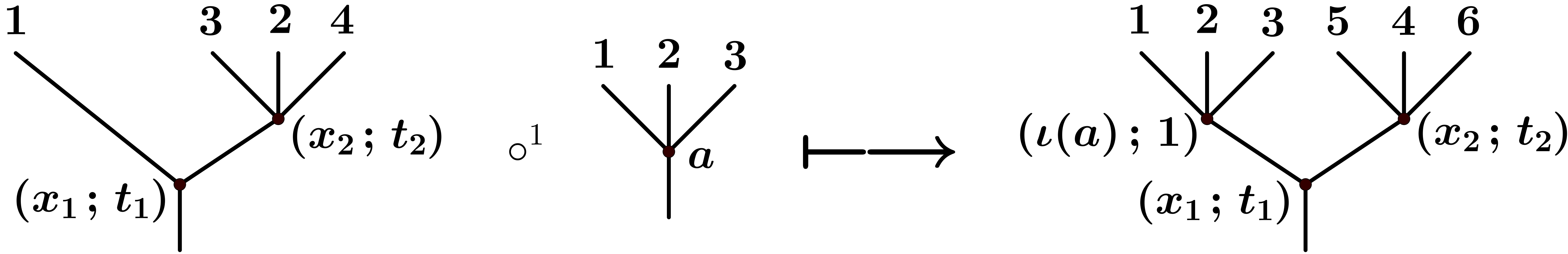}
\caption{Illustration of the right operations.}\vspace{-20pt}
\end{center}
\end{figure}
\end{const}  

From now on, we introduce a filtration of the resolution $\mathcal{WB}(O)$ according to the number of geometrical inputs of the trees labelling the vertices. For this purpose, we give an equivalent definition of the coproduct (\ref{B9}) by using the set $\Upsilon$. An element $[T\,;\,\{T_{v}\}]\in \Upsilon$ is given by a tree $T$, called the \textit{main tree}, and to each $v \in V(T)$ we associate another tree $T_{v}$ with $|v|$ leaves, called an \textit{auxiliary tree}.  So, the coproduct (\ref{B9}) is equivalent to the quotient of the sub-sequence
\begin{equation*}
\left.
\underset{[T\,;\,\{T_{v}\}]\in \Upsilon}{\coprod}\,\,\,\underset{v\in V(T)}{\prod}\, [0\,,\,1]\times \big( \, \underset{v'\in V(T_{v})}{\prod}\, O(|v'|)\times \underset{e\in E^{int}(T_{v})}{\prod}\, [0\,,\,1]\,\big)\,
\right/ \sim
\end{equation*}

\noindent coming from the restriction $t_{s(e)}\geq t_{t(e)}$ on the real numbers $\{t_{v}\}$ indexing the vertices of the main tree. The equivalence relation is generated by the axioms of Construction \ref{e7} and \ref{C0}. Let us remark that, due to the compatibility with the symmetric group axiom, we can suppose that the auxiliary trees are planar.
\begin{figure}[!h]
\begin{center}
\includegraphics[scale=0.4]{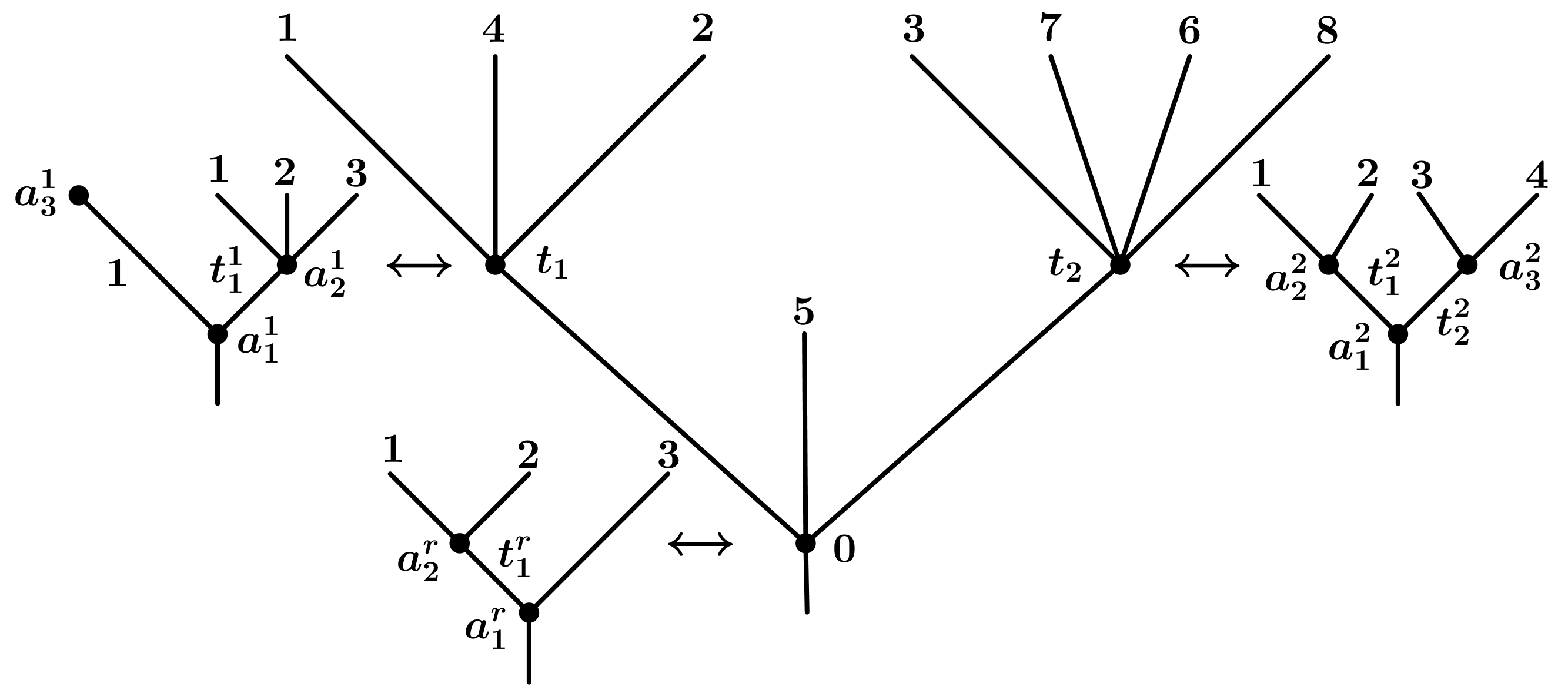}
\caption{Example of a point in $\mathcal{WB}(O)(8)$.}\label{C2}\vspace{-15pt}
\end{center}
\end{figure}

Similarly to the operadic case, we introduce a filtration of the resolution $\mathcal{WB}(O)$ according to the number of geometrical inputs which is the number of leaves of the main tree plus the number of univalent vertices of the auxiliary trees. A point in $\mathcal{WB}(O)$ is said to be prime if the real numbers indexing the vertices of the main tree are in the interval $]0\,,\,1[$. Besides, a point is said to be composite if one vertex of the main tree is indexed by $0$ or $1$ and such a point can be decomposed into prime components. More precisely, the prime components of a point, the main tree of which is planar, are obtained by removing the vertices of the main tree indexed by $0$ or $1$. Otherwise, the prime components of a point of the form $[(T\,;\,\sigma)\,;\,\{t_{v}\}\,;\,\{x_{v}\}]$, with $\sigma\neq id$, coincides with the prime components of $[(T\,;\,id)\,;\,\{t_{v}\}\,;\,\{x_{v}\}]$. For instance, the two prime components associated to the composite point in Figure \ref{C2} are the following ones: 
\begin{center}
\includegraphics[scale=0.22]{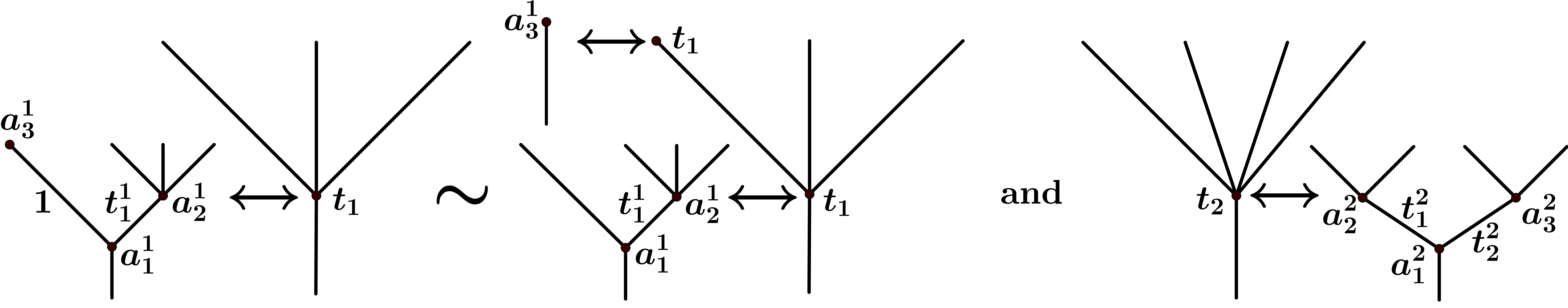}\vspace{-20pt}
\end{center}

\newpage

\noindent A prime point is in the $k$-th filtration term $\mathcal{WB}_{k}(O)$ if the number of geometrical inputs is at most $k$. Similarly, a composite point is in the $k$-filtration if its prime components are in $\mathcal{WB}_{k}(O)$. For instance, the composite point in Figure \ref{C2} is an element in the filtration term $\mathcal{WB}_{4}(O)$. By convention $\mathcal{WB}_{0}(O)=O_{0}$ (see Example \ref{G8}) and the family $\{\mathcal{WB}_{k}(O)\}$ produces the following filtration of $\mathcal{WB}(O)$:
\begin{equation}\label{C1}
\xymatrix{
\mathcal{WB}_{0}(O)\ar[r] & \mathcal{WB}_{1}(O) \ar[r] &  \cdots \ar[r] & \mathcal{WB}_{k-1}(O) \ar[r] & \mathcal{WB}_{k}(O) \ar[r] & \cdots \ar[r] & \mathcal{WB}(O).
}
\end{equation}

\begin{thm}\label{D8}
Assume that $O$ is a well pointed $\Sigma$-cofibrant operad. The objects $\mathcal{WB}(O)$ and $T_{k}(\mathcal{WB}_{k}(O))$ are cofibrant replacements of $O$ and $T_{k}(O)$ in the categories $Bimod_{O}$ and $T_{k}Bimod_{O}$ respectively.
\end{thm}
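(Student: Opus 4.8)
The plan is to mimic the strategy already used for the Boardman--Vogt resolution in Theorem \ref{I0}, namely to realize the filtration (\ref{C1}) as a sequence of cofibrations obtained from pushouts along the free bimodule functor $\mathcal{F}_{B}$ of (\ref{F9}), and then to check that $\mu$ (or rather the analogous projection $\mathcal{WB}(O)\to O$ that sends every vertex-parameter $t_v$ to $0$ and applies $\mu:\mathcal{BV}(O)\to O$ vertex-wise) is a weak equivalence. First I would make precise the map $\mathcal{WB}(O)\to O$: on a point $[T\,;\,\{t_v\}\,;\,\{x_v\}]$ it contracts the main tree $T$ using the operadic structure after replacing each label $x_v\in\mathcal{BV}(O)$ by $\mu(x_v)\in O$; relations $(iii)$ and $(iv)$ of Construction \ref{C0} guarantee this is well defined, and it is evidently a map of $O$-bimodules. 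Using Theorem \ref{I0} (so that $\mu:\mathcal{BV}(O)\to O$ is a weak equivalence) together with the restriction $t_{s(e)}\ge t_{t(e)}$, I would exhibit a fiberwise deformation retraction: scaling all $t_v$ to $1$ collapses every main tree to a point and each auxiliary tree label into $O$ via $\mu$, and relation $(iv)$ makes the endpoint of this scaling land in the image of $\iota\circ\mu$; combined with a contraction of the $[0,1]$-coordinates this shows $\mathcal{WB}(O)\to O$ (and its $k$-truncation) is a weak equivalence. Arities are untouched by the scaling, so the truncated statement follows by the same argument restricted to trees with at most $k$ geometrical inputs.

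The cofibrancy half is the technical core. I would introduce, in parallel with the sequences $X_k[l]$ and $\partial X_k[l]$ of Section \ref{F5}, sequences $Y_k$ and $\partial Y_k$ built from the set $\Upsilon$ of pairs (main tree, auxiliary trees) with exactly $k$ geometrical inputs whose main tree is \emph{prime} (all $t_v\in\,]0,1[$), with $Y_k$ the space of such labelled configurations and $\partial Y_k$ the subspace of degenerate ones: those with a bivalent vertex in an auxiliary tree labelled by a unit, an auxiliary inner edge indexed by $0$ or $1$, or a main-tree vertex whose label lies in the boundary of the appropriate $\mathcal{BV}$-cell. After adding a disjoint basepoint in arity $1$ to form $\tilde Y_k,\partial\tilde Y_k$, I expect a pushout square
$$
\xymatrix@R=17pt{
\mathcal{F}_{B}(\partial\tilde Y_k) \ar[r]\ar[d] & \mathcal{F}_{B}(\tilde Y_k)\ar[d]\\
\mathcal{WB}_{k-1}(O)\ar[r] & \mathcal{WB}_{k}(O)
}
$$
exactly as in the operadic case, and similarly a pushout presenting $\mathcal{WB}_0(O)=O_0$ as the initial object of $O_0\downarrow Seq$ adjoined freely. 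The key input is that $\partial Y_k\to Y_k$ is a $\Sigma$-cofibration; here I would factor $Y_k$ as an iterated pushout product of the cells of $\mathcal{BV}(O)$ (which we may take cofibrant with cofibrant boundary, since $O$ is well pointed and $\Sigma$-cofibrant, by the analysis underlying Theorem \ref{I0}) with the prism $[0,1]^{|V(T)|}$ cut out by the inequalities $t_{s(e)}\ge t_{t(e)}$, and apply Lemma \ref{refA} and Lemma \ref{refB} vertex by vertex to control the equivariance under $Aut(T)\cong\Gamma_T\rtimes\Sigma_T$ as in (\ref{B0}). Since $\mathcal{F}_{B}$ is a left adjoint it preserves these pushouts and cofibrations, so each $\mathcal{WB}_{k-1}(O)\to\mathcal{WB}_k(O)$ is a cofibration of $O$-bimodules, the colimit $\mathcal{WB}(O)$ is cofibrant, and $\mathcal{F}_k(T_k(\mathcal{WB}_k(O)))=\mathcal{WB}_k(O)$ gives the truncated cofibrancy via the adjunction $\mathcal{F}_k\dashv T_k$.

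The main obstacle I anticipate is the boundary analysis: unlike in $\mathcal{BV}(O)$, a vertex of the main tree here carries a \emph{whole} point of $\mathcal{BV}(O)$, which itself is a cell complex with its own prime/composite decomposition, so ``being in the boundary'' must be defined so that it is simultaneously compatible with (a) the simplicial-prism structure in the $t_v$'s coming from $t_{s(e)}\ge t_{t(e)}$, (b) the relation $(iii)$ that contracts equal-parameter edges, and (c) the relation $(iv)$ at parameters $0,1$ that forces the $\mathcal{BV}$-label into $\iota\circ\mu$. Getting $\partial Y_k$ right so that the pushout square is genuinely a pushout — i.e. so that no cells are double-counted across the relations $(iii)$ and $(iv)$ and across the auxiliary-tree contractions inherited from Construction \ref{e7} — and then verifying the $\Sigma$-cofibrancy of $\partial Y_k\hookrightarrow Y_k$ against the semidirect-product symmetry is where the real work lies; this is exactly the point where one invokes ``similarly to \cite[Theorem 2.12]{Ducoulombier16} and \cite[Theorem 5.1]{Berger06}'' but must supply the extra bookkeeping for the two nested tree layers.
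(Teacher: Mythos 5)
Your overall strategy --- prove that the projection $\mathcal{WB}(O)\to O$ is a weak equivalence, then realize the filtration (\ref{C1}) by pushouts along $\mathcal{F}_{B}$ of $\Sigma$-cofibrant boundary inclusions --- is the paper's, but two of your steps fail as stated. First, the ``scaling'' homotopy is not well defined on $\mathcal{WB}(O)$. By relation $(iv)$ of Construction \ref{C0}, a main-tree vertex sitting at parameter $0$ or $1$ remembers its $\mathcal{BV}(O)$-label only through $\mu$; the moment your homotopy moves such a parameter into the open interval, the output depends on the chosen representative of the equivalence class, so the straight-line contraction of the $t_{v}$'s does not descend to the quotient (\ref{B9}). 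The paper's proof is organized around exactly this obstruction: it factors $\tilde{\mu}$ as $\mathcal{WB}'(O)\xrightarrow{f}\mathcal{WB}(O)\xrightarrow{\tilde{\mu}}O$, where $\mathcal{WB}'(O)$ imposes relation $(iv)$ only at $0$ so that the contraction of the parameters to $0$ is well defined there, shows that $\tilde{\mu}\circ f$ is a homotopy equivalence, shows separately that the quotient map $f$ has contractible fibers (contract the $\mathcal{BV}(O)$-labels over the vertices at $1$ by sending their edge parameters to $0$), and concludes by two-out-of-three. You need this intermediate object, or an equivalent device, to make the first half rigorous.

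Second, your single pushout square over $\mathcal{WB}_{k-1}(O)$ is not a pushout, because the attaching map $\mathcal{F}_{B}(\partial\tilde{Y}_{k})\to\mathcal{WB}_{k-1}(O)$ does not exist. When a main-tree vertex reaches parameter $0$ or $1$ the point becomes composite, but its prime components need not lose geometrical inputs: for a two-vertex main tree whose root carries a $1$-ary corolla $\iota(a)$ and goes to $0$, the remaining prime component still has all $k$ geometrical inputs, so this boundary point lies in the image of $\mathcal{F}_{B}(Y_{k})$ itself rather than in $\mathcal{WB}_{k-1}(O)$. This is why the paper refines the induction by a second index, exactly as in the operadic case (\ref{G5}), and the correct choice of that index is itself delicate: filtering by the number of main-tree vertices fails because relation $(iii)$ merges two main-tree vertices while preserving the total number of auxiliary-tree vertices, so the paper filters by $l=\sum_{v}|V(T_{v})|$ and absorbs the symmetric-group axiom and relation $(iii)$ into the internal equivalence relation defining the cell $Y_{k}[l]$; every remaining identification strictly decreases $k$ or $l$, so $\partial Y_{k}[l]$ genuinely attaches to $\mathcal{WB}_{k}(O)[l-1]$. (The base case also needs the extra step $O_{0}\to O_{0}^{+}$ to account for the interval of points $(\iota(\ast_{1})\,;\,t)$.) Your reduction to a $\Sigma$-cofibration of boundary pairs via Lemmas \ref{refA} and \ref{refB} is the right endgame, but it only becomes available after this refinement.
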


\begin{proof}
We start by showing that the map $\tilde{\mu}:\mathcal{WB}(O)\rightarrow O$, which sends the real numbers indexing the vertices of the main tree to $0$, is a weak equivalence in the category of $O$-bimodules. It is sufficient to prove that $\tilde{\mu}$ is a weak equivalence in the category of sequences. For this purpose, we consider the sequence $\mathcal{WB}'(O)$ obtained from the coproduct (\ref{B9}) by taking the equivalence relation generated by the axioms $(i)$, $(ii)$ and $(iii)$ of Construction \ref{C0} as well as the relation defined as follows: if a vertex is labelled by $(x\,;\,0)$, with $x\in \mathcal{BV}(O)$, then $(x\,;\,0)$ is identified with $(\iota\circ\mu(x)\,;\,0)$. By construction, $\mathcal{WB}'(O)$ is just the a left module over $O$ and one has the following maps of sequences:
$$
\xymatrix{
\mathcal{WB}'(O)\ar[r]^{f} & \mathcal{WB}(O)\ar[r]^{\tilde{\mu}} & O,
}
$$
where $f$ is the quotient map coming from the equivalence relation defined as follows: if a vertex is labelled by $(x\,;\,1)$, with $x\in \mathcal{BV}(O)$, then $(x\,;\,1)$ is identified with $(\iota\circ\mu(x)\,;\,1)$. The composite map $\tilde{\mu}\circ f$ is a homotopy equivalence in which the homotopy consists in bringing the real numbers indexing the vertices of the main tree to $0$. Let $x=[T\,;\,\{t_{v}\}\,;\,\{x_{v}\}]$ be a point in $\mathcal{WB}(O)$. Due to the axiom $(iii)$ of Construction \ref{C0}, we can assume that the representative point doesn't have two consecutive vertices indexed by $1$. Then, we denote by $Max(x)$ the set of vertices of the main tree indexed by $1$. Thus, the fiber of  $f$ over $x$ is the space
$$
Fib(f\,;\,x):=\left\{
\{x'_{v}\}\in \underset{v\in Max(x)}{\prod} \mathcal{BV}(O)(|v|) \,\big| \, \mu(x'_{v})=\mu(x_{v})
\right\} .
$$
The fiber is contractible and the homotopy consists in bringing the real numbers indexing the inner edges to $0$. As a consequence, the map $f:\mathcal{WB}'(O)\rightarrow \mathcal{WB}(O)$ is a weak equivalence. By using the \textbf{2to3} axiom in model category theory, we conclude that the map $\tilde{\mu}$ is also a weak equivalence in the category of sequences. Similarly, we can show that the map from $T_{k}(\mathcal{WB}_{k}(O))$ to $T_{k}(O)$ is a weak equivalence.

From now on, we prove that the filtration (\ref{C1}) is composed of cofibrations in the category of $O$-bimodules. For this purpose, we consider another filtration according to the number of vertices which is compatible with the filtration (\ref{G5}) introduced for the derived mapping space of operads. Indeed, for each inclusion $\mathcal{WB}_{k-1}(O)\rightarrow \mathcal{WB}_{k}(O)$, there is another filtration:
$$
\xymatrix@C=20pt{
\mathcal{WB}_{k-1}(O)\ar[r] & \mathcal{WB}_{k}(O)[1]\ar[r] & \cdots\ar[r] & \mathcal{WB}_{k}(O)[l-1]\ar[r] & \mathcal{WB}_{k}(O)[l]\ar[r] & \cdots \ar[r] & \mathcal{WB}_{k}(O) 
}
$$
More precisely, let $\Upsilon_{k}[l]$ be the set of elements $[T\,;\,\{T_{v}\}]\in\Upsilon$ having exactly $k$ geometrical inputs and such that the sum of the vertices of auxiliary trees $\sum_{v}|V(T_{v})|$ is equal to $l$. For instance, the element indexing the point in Figure \ref{C2} is in $\Upsilon_{9}[8]$. Then, the sequence $Y_{k}[l]$ is the quotient of the sub-sequence
\begin{equation}\label{D4}
\left.
\underset{[T\,;\,\{T_{v}\}]\in \Upsilon_{k}[l]}{\coprod}\,\,\,\underset{v\in V(T)}{\prod}\, [0\,,\,1]\times \big( \, \underset{v'\in V(T_{v})}{\prod}\, O(|v'|)\times \underset{e\in E^{int}(T_{v})}{\prod}\, [0\,,\,1]\,\big)\,
\right/ \sim
\end{equation}
coming from the restriction on the real numbers indexing the vertices of the main tree. The equivalent relation is generated by the compatibility with the symmetric group axioms of Construction \ref{e7} and \ref{C0} as well as the axiom $(iii)$ of Construction \ref{C0}. Indeed, these two relations are the only ones which don't necessarily decrease the sum of the vertices of the auxiliary trees. The sequence $\partial Y_{k}[l]$ is formed by points in $Y_{k}[l]$ satisfying one of the following conditions:
\begin{itemize}
\item[$\blacktriangleright$] there is a vertex of the main tree indexed by $0$ or $1$,
\item[$\blacktriangleright$] there is an inner edge of an auxiliary tree  indexed by $0$,
\item[$\blacktriangleright$] there is a bivalent vertex labelled by the unit of the operad $O$.
\end{itemize}
First, we define $\mathcal{WB}_{1}(O)[1]$ from $O_{0}$ (see Example \ref{G8}) by using a sequence of pushout diagrams. Indeed, let $Y_{0}^{\ast}$ be the sequence given by $Y_{0}^{\ast}(0)=O(0)$, $Y_{0}^{\ast}(1)=\ast$ and the empty set otherwise. The inclusion, $O_{0}\rightarrow Y_{0}^{\ast}$ is obviously a cofibration and we consider the following pushout diagrams:
\begin{equation}\label{D5}
 \xymatrix{
\mathcal{F}_{B}(O_{0}) \ar[r] \ar@{=}[d] & \mathcal{F}_{B}(Y_{0}^{\ast}) \ar@{=}[d] \\
O_{0} \ar[r] & O_{0}^{+}
} 
\hspace{15pt}\text{and}\hspace{15pt}
\xymatrix{
\mathcal{F}_{B}(\partial Y_{1}[1]) \ar[r] \ar[d] & \mathcal{F}_{B}(Y_{1}[1]) \ar[d] \\
O_{0}^{+} \ar[r] & \mathcal{WB}_{1}(O)[1]
} 
\end{equation}
Roughly speaking, the based point in $Y_{0}^{\ast}(1)$ corresponds with the class of points in $\mathcal{WB}(O)$ indexed by the corolla labelled by the pairs $(\iota(\ast_{1})\,;\, t)$, with $t\in [0\,,\,1]$ and $\ast_{1}$ the unit of the operad $O$. For $(k\,;\,l)\neq (1\,;\,1)$, the sequences $Y_{k}[l]$ and $\partial Y_{k}[l]$ are not objects in the under category $O_{0}\downarrow Seq$. In order to use the free bimodule functor (see Example \ref{G8}), we consider the sequences $\tilde{Y}_{k}[l]$ and $\partial \tilde{Y}_{k}[l]$ obtained as follows:
$$
\tilde{Y}_{k}[l](n):=
\left\{
\begin{array}{ll}\vspace{4pt}
Y_{k}[l](0)\sqcup O(0) & \text{if } n=0, \\ 
Y_{k}[l](n) & \text{otherwise},
\end{array} 
\right.
\hspace{15pt}\text{and}\hspace{15pt}
\partial\tilde{Y}_{k}[l](n):=
\left\{
\begin{array}{ll}\vspace{4pt}
\partial Y_{k}[l](0)\sqcup O(0) & \text{if } n=0, \\ 
\partial Y_{k}[l](n) & \text{otherwise}.
\end{array} 
\right.
$$ 
Then, we introduce the following pushout diagram: 
$$
\xymatrix{
\mathcal{F}_{B}(\partial \tilde{Y}_{k}[l]) \ar[r] \ar[d] & \mathcal{F}_{B}(\tilde{Y}_{k}[l]) \ar[d] \\
\mathcal{WB}_{k}(O)[l-1] \ar[r] & \mathcal{WB}_{k}(O)[l]
}
$$ 
Similarly to the proof \cite[Theorem 2.12]{Ducoulombier16}, we can check by induction on the number of vertices of the main tree that the inclusion from $\partial Y_{k}[l]$ to $Y_{k}[l]$ is a $\Sigma$-cofibration. Since the functor $\mathcal{F}_{B}$ and the pushout diagrams preserve the cofibrations, the map $\mathcal{WB}_{k}(O)[l-1]\rightarrow \mathcal{WB}_{k}(O)[l]$ as well as the inclusion $\mathcal{WB}_{k-1}(O)\rightarrow\mathcal{WB}_{k}(O)$ are  cofibrations in the category of $O$-bimodules.
\end{proof}

According to the notation introduced in the previous proof, there is a filtration of the resolution $\mathcal{WB}(O)$, denoted by $\{\mathcal{WB}_{k}(O)[l]\}_{k\,;\,l}$, depending on the number $k$ of geometrical inputs and the sum $l$ of the vertices of the auxiliary trees. Since the maps from $\mathcal{WB}_{k}(O)[l-1]$ to $\mathcal{WB}_{k}(O)[l]$ are cofibrations in the category of bimodules over $O$, this filtration produces a tower of fibrations computing the mapping space $Bimod_{O}(\mathcal{WB}(O)\,;\,O')$. More precisely, the vertical maps of the following diagrams are fibrations:
$$
\xymatrix{
Bimod_{O}(\mathcal{WB}_{1}(O)[1]\,;\, O') \ar[r] \ar[d] & Seq(Y_{1}[1]\,;\, O')\ar[d] \\
Bimod_{O}(O_{0}^{+}\,;\, O') \ar[r] & Seq( \partial Y_{1}[1] \,;\, O')
}
\hspace{7pt}\text{and}\hspace{7pt}
\xymatrix{
Bimod_{O}(\mathcal{WB}_{k}(O)[l]\,;\, O') \ar[r] \ar[d] & Seq(Y_{k}[l]\,;\, O')\ar[d] \\
Bimod_{O}(\mathcal{WB}_{k}(O)[l-1]\,;\, O') \ar[r] & Seq( \partial Y_{k}[l] \,;\, O')
}
$$
Furthermore, if $g\in Bimod_{O}(\mathcal{WB}_{k}(O)[l-1]\,;\, O')$, then the fiber over $g$ is homeomorphic to the mapping space of sequences from $Y_{k}[l]$ to $O'$ such that the restriction to sub-sequence $\partial Y_{k}[l]$ coincides with the map induced by $g$:
\begin{equation}\label{C7}
Seq^{g}\big(\,(Y_{k}[l]\,,\, \partial Y_{k}[l])\,;\, O'\,\big).
\end{equation}

Finally, we give a description of the space $Bimod_{O}(\mathcal{WB}_{1}(O)[1]\,;\, O')$. Let us remark that the points in $Y_{1}[1](0)$ are corollas indexed by pairs $(\iota(x)\,;\,t)$, with $x\in O(0)$ and $t\in [0\,,\,1]$. Similarly, the points in $Y_{1}[1](1)$ are corollas indexed by pairs $(\iota(x)\,;\,t)$, with $x\in O(1)$ and $t\in [0\,,\,1]$. Then, by using the pushout diagrams (\ref{D5}), we deduce that a point in the mapping space of bimodules is given by a pair of continuous maps:
\begin{equation}\label{D6}
h_{0}:O(0)\times [0\,,\,1]\longrightarrow O'(0) 
\hspace{15pt}\text{and}\hspace{15pt}
h_{1}:O(1)\times [0\,,\,1]\longrightarrow O'(1),
\end{equation}
such that, $\forall t\in [0\,,\,1]$, $h_{1}(\ast_{1}\,;\,t)$ is equal to a constant denoted by $h_{1}(\ast)$ (which is not necessarily the unit of the operad $O'$). Moreover, the pair $(h_{0}\,;\,h_{1})$ has to satisfy the following relations:
\begin{equation}\label{H0}
\left\{\begin{array}{l}\vspace{4pt}
h_{0}(x\,;\,0)=\eta(x), \\ 
h_{0}(x\,;\,1)=h_{1}(\ast)\circ^{1}\eta(x),
\end{array}\right. 
\hspace{15pt} \text{and}\hspace{15pt} 
\left\{\begin{array}{l}\vspace{4pt}
h_{1}(x\,;\,0)=\eta(x)\circ_{1}h_{1}(\ast), \\ 
h_{1}(x\,;\,1)=h_{1}(\ast)\circ^{1}\eta(x).
\end{array} \right.
\end{equation}  

Analogously to the operadic case, from a $k$-truncated bimodule $M_{k}$, we consider the $k$-free bimodule $\mathcal{F}_{B}^{k}(M_{k})$ whose $k$ first components coincide with $M_{k}$. According to the notation introduced in Example \ref{G8}, the functor $\mathcal{F}_{B}^{k}$ can be expressed as a quotient of the free bimodule functor in which the equivalence relation is generated by the following axiom: if the sum of the incoming edges of any two consecutive vertices connected by an inner edge $e$ is smaller than $k+1$, then we contract $e$ using $k$-truncated bimodule structure of $M_{k}$. In particular, $\mathcal{F}_{B}^{k}$ is left adjoint to the truncated functor $T_{k}$ and one has $\mathcal{F}_{B}^{k}(T_{k}(\mathcal{WB}_{k}(O)))=\mathcal{WB}_{k}(O)$ because $\mathcal{WB}_{k}(O)$ is defined as the sub-bimodule of $\mathcal{WB}(O)$ generated by its $k$ first components. Consequently, there are the following identifications:
$$
T_{k}Bimod_{O}^{h}(T_{k}(O)\,;\,T_{k}(O'))\cong T_{k}Bimod_{O}(T_{k}(\mathcal{WB}_{k}(O))\,;\,T_{k}(O')) \cong Bimod_{O}(\mathcal{WB}_{k}(O)\,;\,O').
$$

\subsection{The $\mathcal{C}_{1}$-algebra structure on the space $Bimod_{O}(\mathcal{WB}(O)\,;\,O')$}

In what follows, $\eta:O\rightarrow O'$ is a map of operads and we introduce a $\mathcal{C}_{1}$-algebra structure on the space $Bimod_{O}(\mathcal{WB}(O)\,;\,O')$. Let us mention that the operad $O$ is not supposed to be well pointed and $\Sigma$-cofibrant in this section. These assumptions are only needed to get a model for the derived mapping space $Bimod_{\mathcal{C}_{d}}^{h}(O\,;\,O')$. So, the purpose is to define a family of continuous maps
$$
\alpha_{n}:\mathcal{C}_{1}(n)\times Bimod_{O}(\mathcal{WB}(O)\,;\,O')^{\times n}\longrightarrow Bimod_{O}(\mathcal{WB}(O)\,;\,O')
$$ 
compatible with the operadic structure of $\mathcal{C}_{1}$. From now on, we fix a configuration $c=<c_{1},\ldots, c_{n}>\in \mathcal{C}_{1}(n)$ as well as a family of bimodule maps $f_{i}:\mathcal{WB}(O)\rightarrow O'$. Since the little cubes arise from an affine embedding, $c_{i}$ is determined by the image of $0$ and $1$. In a similar way, we define the linear embeddings $h_{i}:[0\,,\,1]\rightarrow [0\,,\,1]$, with $0\leq i \leq n$, representing the gaps between the cubes:
$$
h_{i}(0)=
\left\{
\begin{array}{cc}
0 & \text{if } i=0, \\ 
c_{i}(1) & \text{if } i\neq 0,
\end{array} 
\right.
\hspace{15pt}\text{and}\hspace{15pt}
h_{i}(1)=
\left\{
\begin{array}{cc}
1 & \text{if } i=n, \\ 
c_{i+1}(0) & \text{if } i\neq n.
\end{array} 
\right.
$$

The bimodule map $\alpha_{n}(c\,;\,f_{1},\cdots,f_{n})$ is defined by using a decomposition of the points $y=[T\,;\, \{t_{v}\}\,;\,\{x_{v}\}]\in \mathcal{WB}(O)$ according to the parameters indexing the vertices. Roughly speaking, the little cubes $<c_{1},\ldots,c_{n}>$ subdivides the tree $T$ into sub-trees as shown in Figure \ref{G0}. Then, we apply the bimodule map $f_{i}$ to the sub-trees associated to the little cube $c_{i}$ and the composite map $\eta\circ\mu:\mathcal{WB}(O)\rightarrow O\rightarrow O'$ to the sub-trees associated to gaps. Finally, we put together the pieces by using the operadic structure of $O'$. Due to the axioms of Construction \ref{C0}, we can suppose that the representative point $y$ doesn't have two consecutive vertices indexing by the same real. For the moment, we also assume that the tree $T$ is planar. \vspace{10pt}

\begin{figure}[!h]
\begin{center}
\includegraphics[scale=0.3]{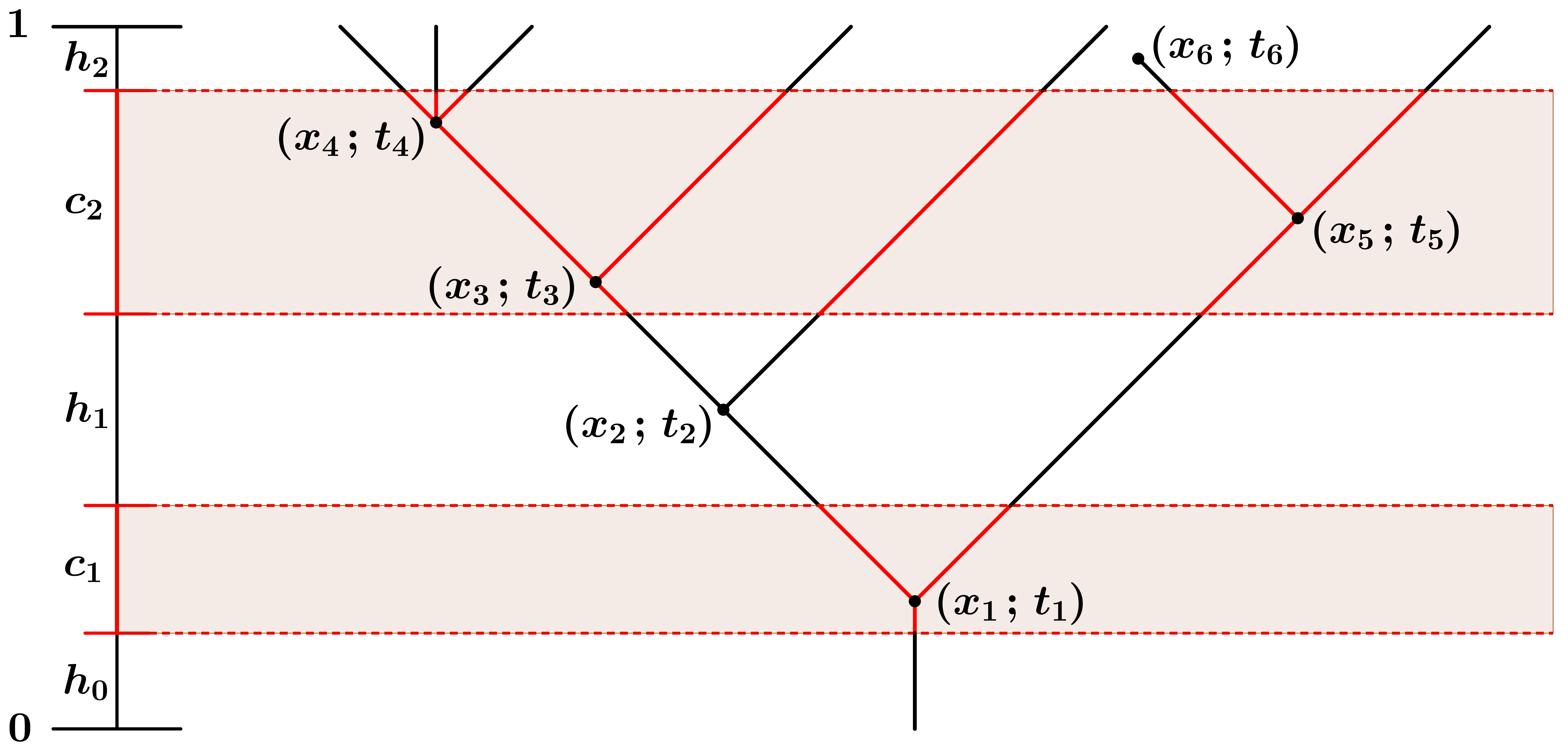}
\caption{Illustration of the subdivision of a point in $\mathcal{WB}(O)$ with the conditions\\  $c_{1}(0)<t_{1}<c_{1}(1)<t_{2}<c_{2}(0)<t_{3},t_{4},t_{5}<c_{2}(1)<t_{6}$.}\label{G0}\vspace{-25pt}
\end{center}
\end{figure}

\newpage

More precisely, a sub-point of $y=[T\,;\,\{t_{v}\}\,;\, \{x_{v}\}]$ is an element in $\mathcal{WB}(O)$ obtained from $y$ by taking a sub-tree of $T$ having the same indexation. A sub-point $w$ is said to be associated to the gap $h_{i}$ if the vertices below $w$ (seen as a sub-point of $y$) are strictly smaller than $h_{i}(0)$ whereas the vertices above $w$ have to be strictly bigger than $h_{i}(1)$. Furthermore, the parameters indexing the vertices of the main tree of $w$ are in the interval $[h_{i}(0)\,,\,h_{i}(1)]$. The set $\mathcal{T}[h_{i}\,;\,y]=\{w_{1}^{i},\ldots,w_{p_{i}}^{i}\}$ of sub-points associated the gap $h_{i}$ is ordered using the planar structure of the tree $T$. For instance, the sets $\mathcal{T}[h_{0}\,;\,y]$, $\mathcal{T}[h_{1}\,;\,y]$ and $\mathcal{T}[h_{2}\,;\,y]$ associated to the point in Figure \ref{G0} are the following ones:
\begin{figure}[!h]
\begin{center}
\includegraphics[scale=0.175]{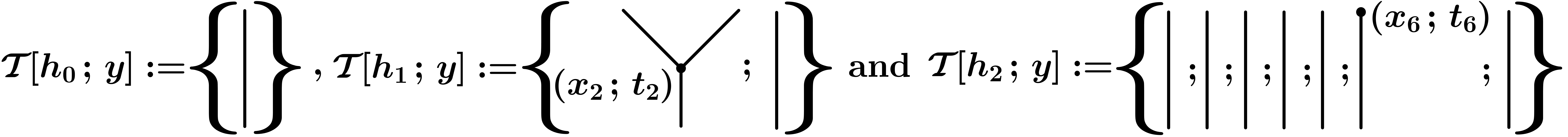}\vspace{-10pt}
\end{center}
\end{figure}

\noindent Similarly, a sub-point $z$ is said to be associated to the little cube $c_{i}$ if the vertices below $z$ (seen as a sub-point in $y$) are smaller than $c_{i}(0)$ whereas the vertices above $z$ have to be bigger than $c_{i}(1)$. Furthermore, the parameters indexing the vertices of the main tree of $z$ are in the interval $]c_{i}(0)\,,\,c_{i}(1)[$. The set $\mathcal{T}[c_{i}\,;\,y]=\{z_{1}^{i},\ldots,z_{q_{i}}^{i}\}$ of sub-points associated the little cube $c_{i}$ is ordered using the planar structure of the tree $T$. For instance, the sets $\mathcal{T}[c_{1}\,;\,y]$ and $\mathcal{T}[c_{2}\,;\,y]$ associated to the point $y$ in Figure \ref{G0} are the following ones:
\begin{figure}[!h]
\begin{center}
\includegraphics[scale=0.175]{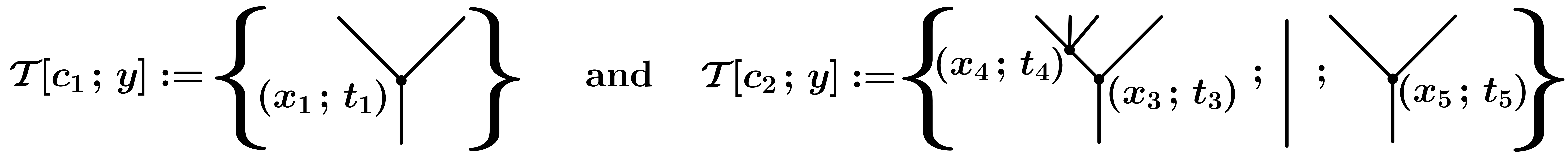}\vspace{-10pt}
\end{center}
\end{figure}

\noindent Let us remark that we really need the trivial trees (which represent the corolla indexed by the unit of the unit of the operad $O$) in the above definition since the bimodule maps $\{f_{i}\}$ don't necessarily map the trivial tree to the unit of the operad $O'$. Furthermore, we need an application rescaling the parameters of the sub-points:
\begin{equation}\label{G9}
c_{i}^{\ast}:\mathcal{T}[c_{i}\,;\,y]\longrightarrow \mathcal{WB}(O)\,\,;\,\, [T'\,;\,\{t'_{v}\}\,;\, \{x'_{v}\}]\longmapsto [T'\,;\,\{c_{i}^{-1}(t'_{v})\}\,;\, \{x'_{v}\}].
\end{equation}
The map is well defined since the parameters indexing the vertices of the elements in $\mathcal{T}[c_{i}\,;\,y]$ are in the interval $]c_{i}(0)\,,\,c_{i}(1)[$. From the operadic structure of $O'$, we build the map $\alpha_{n}(c\,;\,f_{1},\cdots,f_{n})$ by induction as follows: 
$$
\begin{array}{lcl}\vspace{4pt}
\alpha_{n}(c\,;\,f_{1},\cdots,f_{n})_{0}(y) & = & \eta\circ\mu(w_{1}^{0}), \\ 
\alpha_{n}(c\,;\,f_{1},\cdots,f_{n})_{1}(y) & = & \alpha_{n}(c\,;\,f_{1},\cdots,f_{n})_{0}(y)\big(\,f_{1}(c_{1}^{\ast}(z_{1}^{1})),\ldots,f_{1}(c_{1}^{\ast}(z_{q_{1}}^{1}))\,\big),  \\ 
 & \vdots & \\ \vspace{4pt}
\alpha_{n}(c\,;\,f_{1},\cdots,f_{n})_{2k}(y) & = & \alpha_{n}(c\,;\,f_{1},\cdots,f_{n})_{2k-1}(y)\big(\,\eta\circ\mu(w_{1}^{k}) ,\ldots,\eta\circ\mu(w_{p_{k}}^{k}) \,\big), \\ 
\alpha_{n}(c\,;\,f_{1},\cdots,f_{n})_{2k+1}(y) & = & \alpha_{n}(c\,;\,f_{1},\cdots,f_{n})_{2k}(y)\big(\,f_{k}(c_{k}^{\ast}(z_{1}^{k})),\ldots,f_{k}(c_{k}^{\ast}(z_{q_{k}}^{k}))\,\big), \\ 
 & \vdots &  \\ 
\alpha_{n}(c\,;\,f_{1},\cdots,f_{n})(y) & = & \alpha_{n}(c\,;\,f_{1},\cdots,f_{n})_{2n-1}(y)\big(\,\eta\circ\mu(w_{1}^{n}) ,\ldots,\eta\circ\mu(w_{p_{n}}^{n}) \,\big).
\end{array} 
$$
We don't need to rescale  the sub-points associated to gaps since the map $\mu:\mathcal{WB}(O)\rightarrow O$ sends all the parameters indexing the vertices to $0$. If the tree $(T\,;\,\sigma)$ indexing the element $y$ is not planar (in other words $\sigma\neq id$), then the application $\alpha_{n}$ is given by  the formula
$$
\alpha_{n}(c\,;\,f_{1},\cdots,f_{n})([(T\,;\,\sigma)\,;\,\{t_{v}\}\,;\,\{x_{v}\}])=\alpha_{n}(c\,;\,f_{1},\cdots,f_{n})([(T\,;\,id)\,;\,\{t_{v}\}\,;\,\{x_{v}\}])\cdot\sigma.
$$

The reader can check that the family of maps $\{\alpha_{n}\}$ is well defined and compatible with the operadic structure of $\mathcal{C}_{1}$. Furthermore, this construction produces also a $\mathcal{C}_{1}$-algebra structure on the mapping space of truncated bimodules $T_{k}Bimod_{O}(T_{k}(\mathcal{WB}_{k}(O))\,;\,T_{k}(O'))$ because the sub-points of an element in $T_{k}(\mathcal{WB}_{k}(O))$ are still elements in $T_{k}(\mathcal{WB}_{k}(O))$ and the rescaling maps (\ref{G9}) decrease the number of geometrical inputs. As an example, if we denote by $z_{1}^{2}$ the sub-point of the element in Figure \ref{G0} generated by the vertices indexed by $(x_{3}\,;\,t_{3})$ and $(x_{4}\,;\,t_{4})$, then the image is given by\vspace{5pt}

$$
\big(\big(\underset{c_{1}}{\underbrace{f_{1}(x_{1}\,;\,t_{1})}}(\underset{h_{1}}{\underbrace{\eta(x_{2})\,,\,\ast_{1}'}})\big)\big( \underset{c_{2}}{\underbrace{f_{2}(c_{2}^{\ast}(z_{1}^{2}))\,,\,f_{2}(\iota(\ast_{1}))\,,\, f_{2}(x_{5}\,;\,t_{5})}}\big)\big)\big( \underset{h_{2}}{\underbrace{\ast_{1}'\,,\,\ast_{1}'\,,\,\ast_{1}'\,,\,\ast_{1}'\,,\,\ast_{1}'\,,\,\eta(x_{6})\,,\,\ast_{1}'}}\big).
$$

\newpage

\section{Delooping derived mapping space of bimodules}

In the previous section, we introduce a cofibrant replacement $\mathcal{WB}(O)$ of an operad $O$ in the category of bimodules over itself. By using its properties, we have been able to define a $\mathcal{C}_{1}$-algebra structure on the space $Bimod_{O}(\mathcal{WB}(O)\,;\,O')$ from a map of operads $\eta:O\rightarrow O'$. In what follows, we give an explicit description of the loop space associated to the mapping space of bimodules. More precisely, one has the following theorem where both standard and truncated versions are considered.

\begin{thm}\label{B5}
Let $O$ be a well pointed $\Sigma$-cofibrant operad and $\eta:O\rightarrow O'$ be a map of operads. If the space $O'(1)$ is contractible, then there are explicit weak equivalences of $\mathcal{C}_{1}$-algebras:
$$
\begin{array}{rcl}\vspace{4pt}
\xi: \Omega Operad^{h}(O\,;\,O') & \longrightarrow & Bimod_{O}^{h}(O\,;\,O') , \\ 
\xi_{k}:\Omega \big( T_{k}Operad^{h}(T_{k}(O)\,;\,T_{k}(O'))\big) & \longrightarrow & T_{k}Bimod_{O}^{h}(T_{k}(O)\,;\,T_{k}(O')) .
\end{array} 
$$
\end{thm}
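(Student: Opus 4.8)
The plan is to build the map $\xi$ (and its truncated versions $\xi_k$) and verify it is a weak equivalence of $\mathcal{C}_1$-algebras by comparing the two towers of fibrations that were set up in Sections \ref{F5} and \ref{C5}. First I would use the explicit description of points in $\Omega Operad(\mathcal{BV}_1(O)[1]\,;\,O')$ from (\ref{D7}) and of points in $Bimod_O(\mathcal{WB}_1(O)[1]\,;\,O')$ from (\ref{D6})--(\ref{H0}) to define $\xi$ on the bottom of the towers: given a loop $(f_0,f_1)$, set $h_1(\ast)=\ast_1'$ and essentially let $(h_0,h_1)$ be built from $(f_0,f_1)$ by reparametrizing the interval, using that the relations (\ref{H0}) degenerate (when $h_1(\ast)=\ast_1'$) precisely to $h_0(x;0)=h_0(x;1)=\eta(x)$ and $h_1(x;0)=h_1(x;1)=\eta(x)$, which are the loop conditions. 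More conceptually, the map $\xi$ is induced by a map of bimodules $\mathcal{WB}(O)\rightarrow \mathcal{BV}(O)$ (or rather an appropriate zig-zag) coming from the fact that $\mathcal{WB}(O)$ is built out of $\mathcal{BV}(O)$-labelled trees: collapsing the main tree sends a $\mathcal{WB}(O)$-point to a $\mathcal{BV}(O)$-point, and the extra interval coordinates $\{t_v\}$ on the main tree are exactly the loop parameter. I would check directly from the construction of the $\mathcal{C}_1$-action in Section 2.4 (the subdivision of the main tree by little $1$-cubes) that $\xi$ intertwines the $\mathcal{C}_1$-actions — on the loop space side the $\mathcal{C}_1$-action is the standard one on $\Omega(-)$, and the subdivision picture is manifestly compatible with concatenation of loops.

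Next I would show $\xi$ is compatible with the filtrations, i.e. it restricts to maps $\xi_k[l]$ fitting into a morphism of towers
$$
\xymatrix@C=16pt{
\Omega Operad(\mathcal{BV}_k(O)[l]\,;\,O') \ar[r] \ar[d]_{\xi_k[l]} & \Omega Operad(\mathcal{BV}_k(O)[l-1]\,;\,O') \ar[d]^{\xi_k[l-1]} \\
Bimod_O(\mathcal{WB}_k(O)[l]\,;\,O') \ar[r] & Bimod_O(\mathcal{WB}_k(O)[l-1]\,;\,O')
}
$$
so that it suffices, by induction on $(k,l)$ in lexicographic order and the long exact sequences of the fibrations, to prove that $\xi_k[l]$ induces a weak equivalence on fibers. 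By (\ref{C6}) and (\ref{C7}) the fibers are mapping spaces of sequences $Seq^g\big((X_k[l]\times[0,1],\,\partial'X_k[l])\,;\,O'\big)$ and $Seq^g\big((Y_k[l],\,\partial Y_k[l])\,;\,O'\big)$ respectively, and the base case $\Omega Operad(\mathcal{BV}_1(O)[1]\,;\,O')\to Bimod_O(\mathcal{WB}_1(O)[1]\,;\,O')$ is handled by hand using (\ref{D7}) and (\ref{H0}) together with the contractibility of $O'(1)$ (the extra slack in (\ref{H0}) involving $h_1(\ast)\in O'(1)$ is contractible away). The key geometric input is an identification of the relevant pairs of sequences: I would exhibit, for each tree type, a homeomorphism (or at least a homotopy equivalence of pairs compatible with the boundary maps) between $(Y_k[l],\partial Y_k[l])$ and $(X_k[l]\times[0,1],\partial' X_k[l])$ — the auxiliary-tree coordinates of $Y_k[l]$ match the edge-length coordinates of $X_k[l]$, while the interval coordinates $\{t_v\}$ on the main tree of a $\mathcal{WB}(O)$-point, subject to $t_{s(e)}\ge t_{t(e)}$ and the boundary identifications at $0,1$, assemble (after the standard "order-preserving tuples in a simplex vs. cube" reparametrization) into the extra $[0,1]$ factor with its pushout-product boundary $\partial'X_k[l]$.

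The plan then is to recast this as a statement about homotopy limits over Reedy categories, as the paper's organization announces: express the sequences $X_k[l]$, $\partial' X_k[l]$, $Y_k[l]$, $\partial Y_k[l]$ as (co)ends / realizations of diagrams indexed by the appropriate categories of trees with section, observe that the inclusion $\partial'X_k[l]\hookrightarrow X_k[l]\times[0,1]$ corresponds under this identification to $\partial Y_k[l]\hookrightarrow Y_k[l]$, and invoke the homotopy theory of diagrams (Reedy model structures, and the cofibrancy statements already proved — that these are $\Sigma$-cofibrations) to conclude that $\xi_k[l]$ is a weak equivalence on fibers. Combined with the induction and the five lemma for the towers, this yields that $\xi$ and $\xi_k$ are weak equivalences; compatibility with the $\mathcal{C}_1$-structure was checked in the first step and is preserved throughout since the filtration and the $\mathcal{C}_1$-action are compatible (the subdivision of the main tree respects the number of geometrical inputs and the vertex count of auxiliary trees).

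The main obstacle I anticipate is precisely the identification of the pair $(Y_k[l],\partial Y_k[l])$ with $(X_k[l]\times[0,1],\partial'X_k[l])$ up to the right notion of equivalence: one must match the monotone-parameter simplex of main-tree vertex labels with a single interval in a way that is natural in the tree, carries the three boundary strata of $\partial Y_k[l]$ (a main-tree vertex at $0$ or $1$; an auxiliary inner edge at $0$; a bivalent unit vertex) onto the boundary strata of $X_k[l]\times[0,1]$ and the pushout-product boundary $\partial'X_k[l]$, and is compatible with the bimodule/operad structure maps. Handling the unit vertices and the collapsing relations $(iii)$ of Constructions \ref{e7} and \ref{C0} carefully — so that the Reedy-diagram reformulation is literally correct rather than merely morally so — is where the real work lies; the rest is bookkeeping with fibration sequences and the standard homotopy-invariance of homotopy limits of Reedy-cofibrant diagrams.
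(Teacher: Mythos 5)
Your overall architecture --- explicit construction of $\xi$, a morphism between the two towers of fibrations, induction on $(k\,;\,l)$, a base case from the contractibility of $O'(1)$, reduction to a comparison of fibers, and a final appeal to the homotopy theory of diagrams --- is the same as the paper's. The genuine gap is in the fiber comparison, which is where essentially all the content of the proof lives. The fiber on the loop-space side is $Seq^{g}\big((X_{k}[l]\times[0\,,\,1]\,,\,\partial'X_{k}[l])\,;\,O'\big)$, and when it is transported to the bimodule side it becomes the relative mapping space out of the pair $(Y_{k}[l]\,,\,\partial'Y_{k}[l])$, where $\partial'Y_{k}[l]$ is the pushout of $Y_{k}^{(1)}[l]$ (\emph{all} points whose main tree is not a corolla) and $Y_{k}^{(2)}[l]$ (corolla points with parameter in $\{0\,,\,1\}$). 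This is \emph{not} the pair $(Y_{k}[l]\,,\,\partial Y_{k}[l])$ computing the fiber of the bimodule tower: $\partial Y_{k}[l]$ contains only those non-corolla points that additionally carry a degeneracy (a main-tree vertex at $0$ or $1$, an auxiliary inner edge at $0$, or a unit bivalent vertex). So the two fibers are relative mapping spaces out of the \emph{same} total sequence but with two different boundary subsequences $\partial Y_{k}[l]\subsetneq\partial'Y_{k}[l]$, and the theorem reduces to showing that this inclusion --- equivalently $Y_{k}^{(1)}[l]\cap\partial Y_{k}[l]\hookrightarrow Y_{k}^{(1)}[l]$ --- is a homotopy equivalence. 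Your proposed identification of $(Y_{k}[l]\,,\,\partial Y_{k}[l])$ with $(X_{k}[l]\times[0\,,\,1]\,,\,\partial'X_{k}[l])$ by an ``order-preserving tuples in a simplex versus cube'' reparametrization cannot work as stated: it would have to collapse the whole stratum of non-degenerate non-corolla points onto its degenerate locus, and that collapse is a homotopy equivalence rather than a homeomorphism --- proving it is the heart of the matter, not a change of coordinates.

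Second, the tool you invoke for that step --- homotopy invariance of colimits of Reedy-cofibrant diagrams together with ``the cofibrancy statements already proved'' --- is not available here. The cellular decompositions of $Y_{k}^{(1)}[l]$ and of $Y_{k}^{(1)}[l]\cap\partial Y_{k}[l]$ are colimits over tree-indexed Reedy categories whose structure maps $d_{0}$ (regluing an auxiliary tree along an edge indexed by $1$) are not injective, hence not cofibrations, so the diagrams are not Reedy cofibrant. The paper circumvents this with the pointwise contractibility of the parameter polytopes $H(\mathcal{T})$ and $H^{-}(\mathcal{T})$ (Proposition \ref{B1}) and with the special pushout criterion of Proposition \ref{D1} and Corollary \ref{A8}, which only requires the maps $d_{1}$ to be cofibrations; it moreover runs an auxiliary induction over the categories $\mathcal{D}_{k}^{i}[l]$ precisely because a naive induction on the number of vertices of the main tree fails. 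Your sketch would need to supply substitutes for both of these ingredients before the ``bookkeeping'' can begin.
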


By using the resolutions $\mathcal{BV}(O)$ and $T_{k}(\mathcal{BV}_{k}(O))$ for "truncated" operads as well as the resolutions $\mathcal{WB}(O)$ and $T_{k}(\mathcal{WB}_{k}(O))$ for "truncated" bimodules, we can easily define the map $\xi$ and $\xi_{k}$ by induction on the number of vertices of the main tree. First of all, we recall that a point in the loop space $\Omega Operad(\mathcal{BV}(O)\,;\,O')$, based on $\eta\circ\mu:\mathcal{BV}(O)\rightarrow O\rightarrow O'$, is given by a family of $\Sigma$-invariant continuous maps
$$
g_{n}:\mathcal{BV}(O)(n)\times [0\,,\,1]\longrightarrow O'(n),\hspace{15pt}\forall n\geq 0,
$$
satisfying the following conditions:
\begin{itemize}
\item[$\blacktriangleright$] $g_{n}(\iota(\ast_{1})\,;\, t)=\ast_{1}'$\hspace{93pt} $\forall t\in [0\,,\,1]$,
\item[$\blacktriangleright$] $g_{n}(x\circ_{i}y\,;\, t)=g_{l}(x\,;\, t)\circ_{i}g_{n-l+1}(y\,;\, t)$\hspace{15pt} $\forall t\in [0\,,\,1]$, $x\in \mathcal{BV}(O)(l)$ and $y\in \mathcal{BV}(O)(n-l+1)$,
\item[$\blacktriangleright$] $g_{n}(x\,;\, t)=\eta\circ \mu(x)$\hspace{83pt} $t\in \{0\,;\,1\}$ and $x\in \mathcal{BV}(O)(n)$.\vspace{3pt}
\end{itemize}

Let $g=\{g_{n}\}$ be a point in the loop space and let $[(T\,;\,\sigma)\,;\,\{t_{v}\}\,;\,\{x_{v}\}]$ be a point in $\mathcal{WB}(O)$ where $\sigma$ is the permutation labelling the leaves of $T$. If the tree $T$ has only one vertex indexed by the pair $(x_{r}\,;\,t_{r})$, with $x_{r}\in \mathcal{BV}(O)$ and $t_{r}\in [0\,,\,1]$, then $\xi(g)$ is given by the formula
$$
\xi(g)([(T\,;\,\sigma)\,;\,\{t_{v}\}\,;\,\{x_{v}\}])=g_{|T|}(x_{r}\,;\,t_{r})\cdot \sigma.
$$
From now on, we assume that the map $\xi(g)$ is well defined  points indexed by main trees having at most $k$ vertices. Let $[(T\,;\,\sigma)\,;\,\{t_{v}\}\,;\,\{x_{v}\}]$ be a point in $\mathcal{WB}(O)$ with $|V(T)|=k+1$. In that case, the planar tree $(T\,;\,id)$ has a decomposition of the form $T_{1}\circ_{i}T_{2}$ where $T_{1}$ and $T_{2}$ are two planar trees having at most $k$ vertices. If we denote by $[(T_{1}\,;\,id)\,;\,\{t^{1}_{v}\}\,;\,\{x^{1}_{v}\}]$ and $[(T_{2}\,;\,id)\,;\,\{t^{2}_{v}\}\,;\,\{x^{2}_{v}\}]$ the points obtained from $[(T\,;\,\sigma)\,;\,\{t_{v}\}\,;\,\{x_{v}\}]$ by taking the induced indexations, then $\xi(g)$ is given by the formula
\begin{equation}\label{H1}
\xi(g)([(T\,,\,\sigma)\,;\,\{t_{v}\}\,;\,\{x_{v}\}])=\big( \xi(g)([(T_{1}\,,\,id)\,;\,\{t^{1}_{v}\}\,;\,\{x^{1}_{v}\}])\circ_{i}\xi(g)([(T_{2}\,,\,id)\,;\,\{t^{2}_{v}\}\,;\,\{x^{2}_{v}\}])\big)\cdot\sigma.
\end{equation}
For instance, the image of the point $[T\,;\,\{t_{v}\}\,;\,\{x_{v}\}]$ in Figure \ref{C3} is the following one:
$$
\begin{array}{cl}\vspace{4pt}
\xi(g)([T\,;\,\{t_{v}\}\,;\,\{x_{v}\}]) & = g_{2}(x_{r}\,;\,t_{r})\circ(g_{2}(x_{1}\,;\, t_{r})\,;\,g_{3}(x_{2}\,;\,t_{2})),  \\ 
 & =g_{3}(x_{r}\circ_{1}x_{1}\,;\,t_{r})\circ_{3}g_{3}(x_{2}\,;\, t_{2}).
\end{array} 
$$

In the same way, we define the map $\xi_{k}$ in the context of truncated operads and bimodules. By construction, the map $\xi$ and $\xi_{k}$ are morphisms of $\mathcal{C}_{1}$-algebras and they produce a morphism between the two towers of fibrations introduced in Sections \ref{F5} and \ref{C5}:
$$
\xymatrix@C=15pt{
\Omega Operad(\mathcal{BV}_{1}(O)[1]\,;\,O') \ar[d]_{\xi_{1}[1]} & \cdots \ar[l] & \Omega Operad(\mathcal{BV}_{k}(O)[l-1]\,;\,O') \ar[l] \ar[d]_{\xi_{k}[l-1]} & \Omega Operad(\mathcal{BV}_{k}(O)[l]\,;\,O') \ar[l] \ar[d]_{\xi_{k}[l]} & \cdots \ar[l]\\
Bimod_{O}(\mathcal{WB}_{1}(O)[1]\,;\,O') & \cdots \ar[l] & Bimod_{O}(\mathcal{WB}_{k}(O)[l-1]\,;\,O') \ar[l] & Bimod_{O}(\mathcal{WB}_{k}(O)[l]\,;\,O') \ar[l]& \cdots \ar[l]
}
$$
Since the horizontal maps are fibrations, the map $\xi$ is a weak equivalence if the induced maps $\xi_{k}[l]$ are weak equivalences. Similarly, the map $\xi_{k}$ is a weak equivalence if the maps $\xi_{i}[l]$, with $i\leq k$, are weak equivalences. So, the rest of this section is devoted to prove by induction that the vertical maps of the above diagram are weak equivalences. 

\subsection{Initialization: The map $\xi_{1}[1]$ is a weak equivalence}

According to the notation introduced in Section \ref{C5}, we recall that the mapping space $Bimod_{O}(\mathcal{WB}_{1}(O)[1]\,;\,O')$ is formed by pairs $(h_{0}\,;\,h_{1})$ of the form
$$
h_{0}:O(0)\times [0\,,\,1]\longrightarrow O'(0)
\hspace{15pt}\text{and}\hspace{15pt}
h_{1}:O(1)\times [0\,,\,1]\longrightarrow O'(1),
$$
satisfying the relations (\ref{H0}). In particular, the map $h_{1}(\ast_{1}\,;\,-):[0\,,\,1]\rightarrow O'(1)$ is a constant map which is not necessarily equal to the unit $\ast_{1}'$ of the operad $O'$. Nevertheless, the image of the inclusion $\xi_{1}[1]$ is formed by pairs satisfying the relation $h_{1}(\ast_{1}\,;\,t)=\ast_{1}'$ with $t\in [0\,,\,1]$. In other words, the image can be expressed as the fiber
$$
Fib\big( \, \alpha : Bimod_{O}(\mathcal{WB}_{1}(O)[1]\,;\,O') \longrightarrow Bimod_{O}(O_{0}^{+}\,;\,O')\,\big),
$$ 
over the map sending the based point $\ast\in O_{0}^{+}(1)$ to $\ast_{1}'$. As shown in the proof of Theorem \ref{D8}, $\alpha$ is a fibration and the fiber is weakly equivalent to its homotopy fiber. Furthermore, the homotopy fiber is weakly equivalent to $Bimod_{O}(\mathcal{WB}_{1}(O)[1]\,;\,O')$ since the space $Bimod_{O}(O_{0}^{+}\,;\,O')$ is contractible. Indeed, due to the left pushout diagram (\ref{D5}), one has the following identifications:
$$
Bimod_{O}(O_{0}^{+}\,;\,O')\cong O_{0}\downarrow Seq(Y_{0}^{\ast}\,;\,O') = Top(Y_{0}^{\ast}(1)\,;\, O'(1)) = O'(1)\simeq \ast.
$$
Thus proves that the map $\xi_{1}[1]$ is a weak equivalence.

\subsection{Induction: Simplification of the problem}\label{H7}

From now on, we assume that the map $\xi_{k}[l-1]$ is a weak equivalence. In this subsection, we show that $\xi_{k}[l]$ is also a weak equivalence if and only if a specific map of sequences is a homotopy equivalence. The latter is easier to understand and can be expressed using the language of diagrams. Let $g$ be a point in the loop space $\Omega Operad(\mathcal{BV}_{k}(O)[l-1]\,;\,O')$. We denote the fiber over $g$ by $F_{1}$ and the fiber over $\xi_{k}[l-1](g)$ by $F_{2}$. In other words, one has the following diagram:
$$
\xymatrix{
\Omega Operad(\mathcal{BV}_{k}(O)[l-1]\,;\,O')\ar[d]_{\xi_{k}[l-1]}^{\simeq} & \Omega Operad(\mathcal{BV}_{k}(O)[l]\,;\,O')\ar[d]_{\xi_{k}[l]}\ar[l] & F_{1} \ar[d]_{\xi^{g}}\ar[l] \\
Bimod_{O}(\mathcal{WB}_{k}(O)[l-1]\,;\,O') & Bimod_{O}(\mathcal{WB}_{k}(O)[l]\,;\,O')\ar[l] & F_{2}\ar[l]
}
$$
Since the left horizontal maps are fibrations, $\xi_{k}[l]$ is a weak equivalence if the map $\xi^{g}$ between the fibers is a weak equivalence. From the identifications (\ref{C6}) and (\ref{C7}), there is the following diagram:
\begin{equation}\label{C8}
\xymatrix@R=17pt{
F_{1}\ar[d]_{\xi^{g}} &  Seq^{g}\big(\,(X_{k}[l]\times [0\,,\,1]\,,\, \partial'X_{k}[l])\,;\, O'\,\big) \ar[d] \ar[l]_{\hspace{-50pt}\cong} \\
F_{2} & Seq^{\xi_{k}[l-1](g)}\big(\,(Y_{k}[l]\,,\, \partial Y_{k}[l])\,;\, O'\,\big) \ar[l]_{\hspace{-50pt}\cong}
}
\end{equation}

First, one has to express the fiber $F_{1}$ as a subspace of the fiber $F_{2}$. For this purpose, let us remark that there is a map from $X_{k}[l]\times [0\,,\,1]$ to $Y_{k}[l]$ sending a pair $(x\,;\,t)$ to the corolla indexed by $(x\,;\,t)$. For this reason, in order to describe $\partial'X_{k}[l]$ as a sub-sequence of $Y_{k}[l]$, we consider the set $\phi_{k}[l]$ of elements $[T\,;\,\{T_{v}\}]\in \Upsilon_{k}[l]$ (having $k$ geometrical inputs and such that $\sum_{v}|V(T_{v})|=l$ ) in which $T$ is not a corolla. Then, we introduce the following sequences:
\begin{itemize}[leftmargin=11pt]
\item[$\blacktriangleright$] $Y_{k}^{(1)}[l]$ is the quotient of the restriction of the coproduct (\ref{D4}) to the set $\phi_{k}[l]$. The equivalence relation is generated by the compatibility with the symmetric group axioms of Constructions \ref{e7} and \ref{C0} as well as the relation defined as follows: if an inner edge $e$ of the main tree $T$, with $|V(T)|>2$, satisfies $t_{t(e)}=t_{s(e)}$, then $e$ is contracted by using the operadic structure of $\mathcal{BV}(O)$. Roughly speaking, this sequence is equivalent to $\partial X_{k}[l]\times [0\,,\,1]$. For instance, these are points in the sequence $Y_{k}^{(1)}[l]$:\vspace{-5pt}

\begin{figure}[!h]
\begin{center}
\includegraphics[scale=0.30]{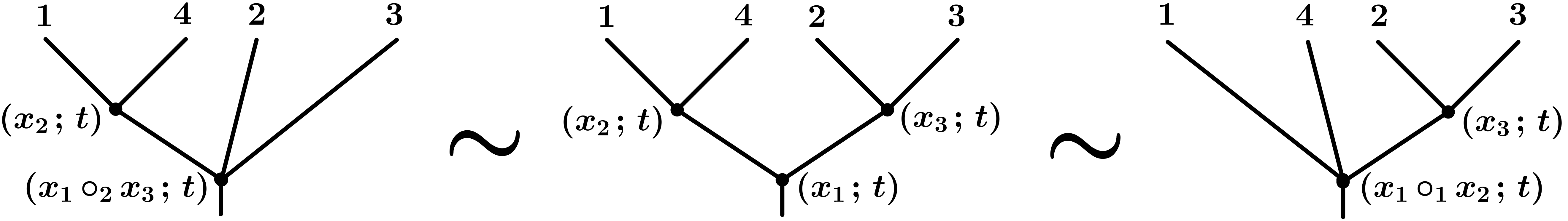}\vspace{-30pt}
\end{center}
\end{figure}

\newpage

\item[$\blacktriangleright$] $Y_{k}^{(2)}[l]$ is the quotient of the restriction of the coproduct (\ref{D4}) to the elements $[T\,;\,\{T_{v}\}]\notin \phi_{k}[l]$ in which the unique vertex of the main tree is indexed by $0$ or $1$. The equivalence relation is generated by the compatibility with the symmetric group axioms of Constructions \ref{e7} and \ref{C0}. Roughly speaking, this sequence is equivalent to $X_{k}[l]\times \{0\,,\,1\}$.

\item[$\blacktriangleright$] $Y_{k}^{(1)}[l]\cap Y_{k}^{(2)}[l]$ is formed by points in $Y_{k}^{(1)}[l]$ in which all the vertices of the main tree are indexed by the same real number $\epsilon\in \{0\,,\,1\}$. Alternatively, this sequence can be defined as the set of points in $Y_{k}^{(2)}[l]$ having at least one inner edge indexed by $1$. Roughly speaking, $Y_{k}^{(1)}[l]\cap Y_{k}^{(2)}[l]$ is equivalent to the sequence $\partial X_{k}[l]\times \{0\,,\,1\}$. For instance, this is a point in the intersection:\vspace{-5pt}
\begin{figure}[!h]
\begin{center}
\includegraphics[scale=0.21]{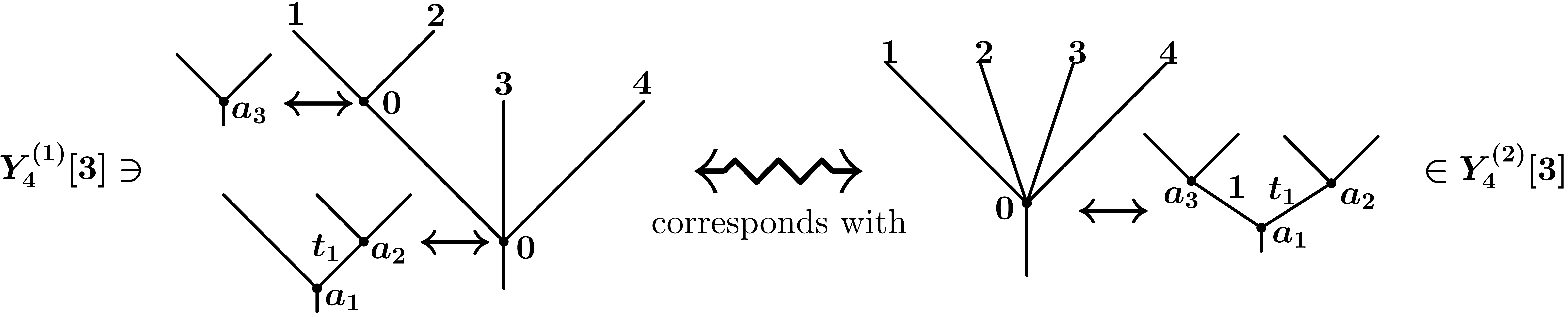}\vspace{-10pt}
\end{center}
\end{figure}
\end{itemize}
Then, we define the pushout product
$$
\partial' Y_{k}[l]:= Y_{k}^{(1)}[l] \underset{Y_{k}^{(1)}[l]\cap Y_{k}^{(2)}[l]}{\coprod} Y_{k}^{(2)}[l].
$$

\begin{lmm}
If $\tilde{g}$ is the map induced by $g$ on the sequence $\partial' Y_{k}[l]$ (see Formula (\ref{H1})), then one has the following homeomorphism:
$$
F: Seq^{\tilde{g}}\big(\,(Y_{k}[l]\,,\, \partial'Y_{k}[l])\,;\, O'\,\big)\leftrightarrows Seq^{g}\big(\,(X_{k}[l]\times [0\,,\,1]\,,\, \partial'X_{k}[l])\,;\, O'\,\big):G.
$$
\end{lmm}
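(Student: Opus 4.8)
The plan is to produce $F$ and $G$ by hand and check directly that they are mutually inverse, continuous, $\Sigma$-equivariant maps; this should be pure book-keeping with the definitions of $Y_{k}[l]$, $\partial'Y_{k}[l]$ and the recursion (\ref{H1}) defining $\tilde g$, with no extra homotopy theory required.

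The first step is to pin down the "corolla part" of $Y_{k}[l]$. The assignment sending $(x;t)$ to the corolla of $Y_{k}[l]$ whose unique vertex carries the $\mathcal{BV}(O)$-label $x$ and is indexed by $t$ identifies $X_{k}[l]\times[0,1]$ with the sub-sequence $C\subseteq Y_{k}[l]$ of points whose main tree is a corolla: among such points the only surviving relation is the $\Sigma$-action, since axiom $(iii)$ of Construction \ref{C0} is vacuous on corollas. Under this identification I would check that $Y_{k}^{(2)}[l]$ becomes $C\cap\bigl(X_{k}[l]\times\{0,1\}\bigr)$, and that the part of $C$ lying in the image of $Y_{k}^{(1)}[l]$ — precisely the corolla points whose $\mathcal{BV}(O)$-label carries an inner edge indexed by $1$, these being the points created by the grafting in axiom $(iii)$ of Construction \ref{C0} — is carried onto the corresponding piece of $\partial X_{k}[l]\times[0,1]$. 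Assembling these, $\partial'Y_{k}[l]=Y_{k}^{(1)}[l]\cup_{Y_{k}^{(1)}[l]\cap Y_{k}^{(2)}[l]}Y_{k}^{(2)}[l]$ should be carried onto $\partial'X_{k}[l]=\partial X_{k}[l]\times[0,1]\cup_{\partial X_{k}[l]\times\{0,1\}}X_{k}[l]\times\{0,1\}$, compatibly with the boundary datum $\tilde g$ on one side and "the map induced by $g$" on the other. This stratum-by-stratum matching, including the degenerate (unit- and $0$-indexed) loci of $\partial X_{k}[l]$, is the step I expect to be the main obstacle, since it forces one to chase the grafting of auxiliary trees prescribed by axiom $(iii)$ through the definitions of $Y_{k}^{(1)}[l]$, $Y_{k}^{(2)}[l]$ and their intersection.

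Granting the matching, $F$ is forced: for $\phi\in Seq^{\tilde g}\bigl((Y_{k}[l],\partial'Y_{k}[l]);O'\bigr)$ the image of the whole non-corolla part $Y_{k}^{(1)}[l]$ lies in $\partial'Y_{k}[l]$, so $\phi$ agrees with $\tilde g$ off $C$ and is determined by $\phi|_{C}$; I would set $F(\phi):=\phi|_{C}$, transported to a map $X_{k}[l]\times[0,1]\to O'$. That $F(\phi)$ satisfies the required condition on $\partial'X_{k}[l]$ then follows from the matching together with the single-vertex case of (\ref{H1}): on the corolla carrying $x$ the value of $\tilde g$ is $g_{|x|}(x;-)$, which is exactly the map that $g$ induces on that stratum through the two towers of fibrations (Sections \ref{F5} and \ref{C5}), and which equals $\eta\circ\mu(x)$ at the parameters $0$ and $1$.

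Conversely, for $\psi\in Seq^{g}\bigl((X_{k}[l]\times[0,1],\partial'X_{k}[l]);O'\bigr)$ I would let $G(\psi)$ be $\psi$ on $C$ and $\tilde g$ on the non-corolla part of $Y_{k}[l]$. The two prescriptions agree on the overlap — the corolla points carrying a $1$-indexed inner edge — because there $\psi$ coincides with the $g$-induced map, which by (\ref{H1}) coincides with $\tilde g$; hence $G(\psi)$ is a well-defined $\Sigma$-equivariant map of sequences, and its continuity I would deduce from the pasting lemma, using that $Y_{k}[l]$ carries the quotient topology of its corolla and non-corolla strata and that the inclusions in play are cofibrations (both already established in the proof of Theorem \ref{D8}). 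By construction $G(\psi)$ restricts to $\tilde g$ on $\partial'Y_{k}[l]$. Finally $F\circ G=\mathrm{id}$ and $G\circ F=\mathrm{id}$ hold on the nose, since $F$ records the corolla part and $G$ restores it while re-imposing the forced value $\tilde g$ elsewhere, so $F$ and $G$ are mutually inverse homeomorphisms.
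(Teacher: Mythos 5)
Your proposal is correct and follows essentially the same route as the paper: the paper also defines $F$ by restricting to the corolla part (sending $(x;t)$ to the value on the corolla indexed by $(x;t)$) and defines $G$ "in the same way as the map $\xi$", which on the non-corolla part of $Y_{k}[l]$ is exactly the value $\tilde{g}$ prescribed by the recursion (\ref{H1}), so your pasting description of $G$ agrees with the paper's recursive one. The well-definedness and stratum-matching checks you flag are precisely the ones the paper leaves to the reader, and your treatment of the overlap (corolla points whose label carries a $1$-indexed inner edge, where $\psi$ and $\tilde{g}$ agree by multiplicativity of $g$) is the correct key point.
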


\begin{proof}
Let $f$ be a point in the space $Seq^{\tilde{g}}(\,(Y_{k}[l]\,,\, \partial'Y_{k}[l])\,;\, O'\,)$. The map $F(f)$ sends a pair $(x\,;\,t)$, with $x\in X_{k}[l]$ and $t\in [0\,,\, 1]$, to the image of the corolla indexed by $(x\,;\,t)$ through the application $f$. Conversely, the map $G$ is defined in the same way as the map $\xi$. The reader can check that the maps $F$ and $G$ are well defined and produce a homeomorphism.  
\end{proof}

As a consequence, Diagram (\ref{C8}) is equivalent to Diagram (\ref{A9}) in which the spaces are easier to understand and the right vertical map is just an inclusion of topological spaces. In particular, we deduce from the following that the map $\xi^{g}$ is a weak equivalence if the inclusion from $\partial Y_{k}[l]$ to $\partial' Y_{k}[l]$ is a homotopy equivalence.
\begin{equation}\label{A9}
\xymatrix@R=15pt{
F_{1}\ar[d]_{\xi^{g}} &  Seq^{\tilde{g}}\big(\,(Y_{k}[l]\,,\, \partial'Y_{k}[l])\,;\, O'\,\big) \ar[d] \ar[l]_{\hspace{-50pt}\cong} \\
F_{2} & Seq^{\xi_{k}[l-1](g)}\big(\,(Y_{k}[l]\,,\, \partial Y_{k}[l])\,;\, O'\,\big) \ar[l]_{\hspace{-50pt}\cong}
}
\end{equation}

Both sequences $\partial Y_{k}[l]$ and $\partial Y'_{k}[l]$ have a cellular decomposition indexing by the same set $\Upsilon_{k}[l]$. We will prove, cells by cells, that the inclusion is a homotopy equivalence (or equivalently a weak equivalence between $\Sigma$-cofibrant sequences since the objects are fibrant). Fortunately, some of the cells are identical and we can easily check that we only need to take care of the cells indexed by the subset $\phi_{k}[l]$. Indeed, let us remark that $\partial Y_{k}[l]$ can also be expressed as the pushout product
$$
\partial Y_{k}[l]:= \big( Y_{k}^{(1)}[l]\cap \partial Y_{k}[l] \big) \underset{Y_{k}^{(1)}[l]\cap Y_{k}^{(2)}[l]}{\coprod} Y_{k}^{(2)}[l].
$$ 
In particular, the diagram below implies that the inclusion from $\partial Y_{k}[l]$ to $\partial' Y_{k}[l]$ is a homotopy equivalence if the inclusion from  $Y_{k}^{(1)}[l]\cap \partial Y_{k}[l]$ to $Y_{k}^{(1)}[l]$ is a homotopy equivalence:
$$
\xymatrix@R=19pt{
Y_{k}^{(1)}[l]\cap \partial Y_{k}[l]\ar[d] & Y_{k}^{(1)}[l]\cap Y_{k}^{(2)}[l]\ar@{=}[d] \ar[r]\ar[l] & Y_{k}^{(2)}[l]\ar@{=}[d]\\
Y_{k}^{(1)}[l] & Y_{k}^{(1)}[l]\cap Y_{k}^{(2)}[l]\ar[r]\ar[l] & Y_{k}^{(2)}[l]
}
$$

\begin{Ccl}
The map $\xi_{k}[l]$ is a weak equivalence if the inclusion $Y_{k}^{(1)}[l]\cap \partial Y_{k}[l]\rightarrow Y_{k}^{(1)}[l]$ is a homotopy equivalence.\vspace{-20pt}
\end{Ccl}

\newpage

However, we cannot prove by induction on the number of vertices of the main tree that the inclusion is a homotopy equivalence. To solve this issue, we express the two sequences in terms of colimit of diagrams indexed by the same Reedy category. Then, we use technical tools coming from the homotopy theory of diagrams to prove the result. For this reason, the next subsection gives a short introduction to the homotopy theory of diagrams.

\subsection{Induction: The homotopy theory of diagrams}

The homotopy theory of diagrams consists in introducing a model  structure on the category of functors from a small category $\mathcal{D}$ to a model category $\mathcal{C}$. This structure is called the Reedy model category structure and we refer the reader to \cite{Chacholski02,Hirschhorn03,Hirschhorn15} for more details about the notions discussed in the following. In particular, we recall the notion of Reedy model category and we introduce the Reedy category $\mathcal{D}_{k}[l]$ used in the next subsection in order to prove the Theorem \ref{B5}.

\begin{defi}
A \textit{Reedy category} is a small category $\mathcal{D}$ endowed with two subcategories $\mathcal{D}_{-}$ (the \textit{inverse category}) and $\mathcal{D}_{+}$ (the \textit{direct category}), both of which contain all the objects of $\mathcal{D}$, in which every object can be assigned a non-negative integer, called the degree, such that:
\begin{itemize}
\item[$\blacktriangleright$] every non-identity map of $\mathcal{D}_{+}$ raises degree,
\item[$\blacktriangleright$] every non-identity map of $\mathcal{D}_{-}$ lowers degree,
\item[$\blacktriangleright$] every map in $\mathcal{D}$ factors uniquely as a map in $\mathcal{D}_{-}$ followed by a map in $\mathcal{D}_{+}$.
\end{itemize}
\noindent Many small categories of diagram shapes are Reedy categories such as
$$
\left(
\xymatrix@C=20pt{
\cdot \ar[r] & \cdot \ar[r] & \cdot \ar[r] & \cdots
}
\right)\hspace{15pt}\text{or}\hspace{15pt}
\left(
\xymatrix@C=20pt{
\cdot  & \ar[l] \cdot \ar[r] & \cdot
}
\right)
$$
\end{defi}

\begin{expl}
\textbf{The Reedy category associated to a directed graph}

\noindent Let $\mathcal{G}$ be a directed graph. The objects of the Reedy category $\mathcal{D}$ associated to the directed graph $\mathcal{G}$ is composed of the set of vertices and the set of ordered pairs of vertices of $\mathcal{G}$. For any object in $\mathcal{D}$ corresponding to an ordered pair $(g_{1}\,;\,g_{2})$ in $\mathcal{G}$, there are two morphisms in the category $\mathcal{D}$
$$
d_{0}:(g_{1}\,;\,g_{2})\longrightarrow g_{1} \hspace{15pt}\text{and}\hspace{15pt} d_{1}:(g_{1}\,;\,g_{2})\longrightarrow g_{2}.
$$   
The objects corresponding to a vertices in the graph $\mathcal{G}$ are assigned to the non-negative integer $1$ while the objects corresponding to ordered pairs of vertices are assigned to $0$. In that case, the direct category is $\mathcal{D}$ itself whereas the inverse category contains only the identity maps. \vspace{-5pt} 
\begin{figure}[!h]
\begin{center}
\includegraphics[scale=0.3]{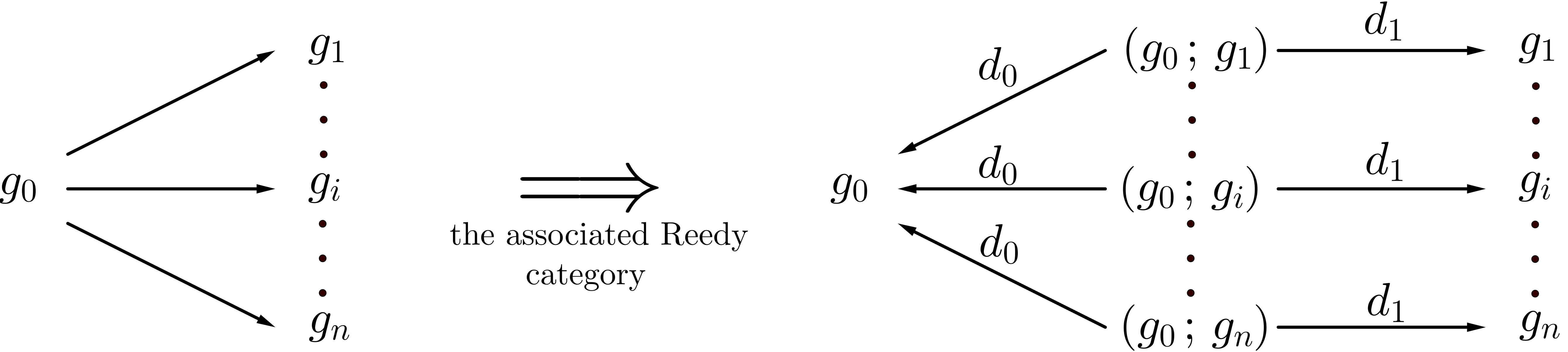}
\caption{Illustration of the directed graph $\mathcal{G}_{n}$ together with the corresponding Reedy category $\mathcal{D}_{n}$.}\label{A5}\vspace{-15pt}
\end{center}
\end{figure}
\end{expl}

\begin{defi}\label{A3}
\textbf{The directed graph $\mathcal{G}_{k}^{i}[l]$ and the corresponding Reedy category $\mathcal{D}_{k}^{i}[l]$}

\noindent Let $\phi_{k}^{p}[l]$ be the set of elements $[T\,;\,\{T_{v}\}]\in \Upsilon$ (see Section \ref{C5}) having $k$ geometrical inputs and such that the trees $T$ and $T_{v}$ are planar, the main tree $T$ is not a corolla and the sum of the vertices of the auxiliary trees $\sum_{v}|V(T_{v})|$ is equal to $l$. 

Analogously to Definition \ref{C9}, one can talk about non-planar isomorphism for the elements in the set $\phi_{k}^{p}[l]$ and more particularly about the automorphisms group $Aut([T\,;\,\{T_{v}\}])$ associated to an element $[T\,;\,\{T_{v}\}]$. More precisely, a non-planar isomorphism from $[T_{1}\,;\,\{T_{v}^{1}\}]$ to $[T_{2}\,;\,\{T_{v}^{2}\}]$  is given by a family of non-planar isomorphisms of trees
$$
f:T_{1}\longrightarrow T_{2}
\hspace{15pt}\text{and}\hspace{15pt}
\{g_{v}:T_{v}^{1}\longrightarrow T_{f(v)}^{2}\}_{v\in V(T_{1})}
$$
such that, for each $v\in V(T_{1})$, the permutation induced by $f$ on the incoming edges of $v$ coincides with the permutation  induced by $g_{v}$ on the leaves of $T_{v}^{1}$. In particular, the non-planar isomorphism of trees $f$ is entirely determined by the family $\{g_{v}\}$. So, the class of an element $[T\,;\,\{T_{v}\}]$ up to a non-planar ismorphism,

\newpage

\noindent  denoted by $\llbracket T\,;\,\{T_{v}\}\rrbracket$, is isomorphic to the product of the classes of the auxiliary trees. For instance, in Figure \ref{H2} the elements $[T_{1}\,;\,\{T_{v}^{1}\}]$ and $[T_{2}\,;\,\{T_{v}^{2}\}]$ are in the same class up to a non-planar isomorphism while the element $[T_{3}\,;\,\{T_{v}^{3}\}]$ represents a different class.
\begin{center} 
\begin{figure}[!h]
\begin{center}
\includegraphics[scale=0.17]{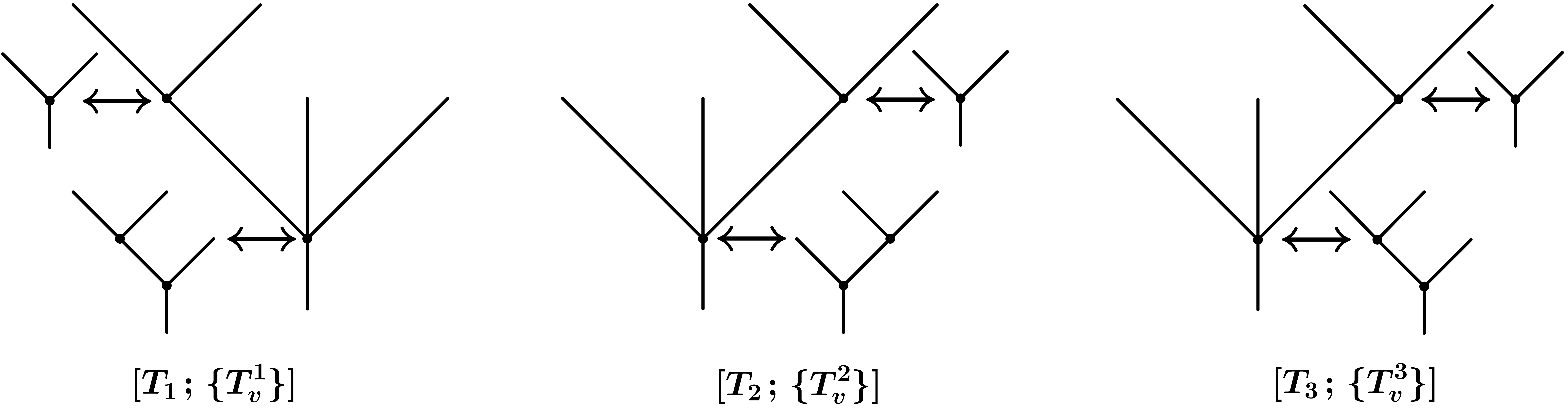}
\caption{Illustration of elements in $\phi^{p}_{4}[3]$.}\vspace{-10pt}\label{H2}
\end{center}
\end{figure}
\end{center}

From now on, we introduce the directed graph $\mathcal{G}_{k}[l]$. Its set of vertices is composed of the set of equivalence classes of the elements in $\phi_{k}^{p}[l]$ up to a non-planar isomorphism. There is an ordered pair $(\mathcal{T}_{1}\,;\,\mathcal{T}_{2})$, with $\mathcal{T}_{1}=\llbracket T_{1}\,;\,\{T^{1}_{v}\}\rrbracket$ and $\mathcal{T}_{2}=\llbracket T_{2}\,;\,\{T^{2}_{v}\}\rrbracket$, if there exists a representative element $[T\,;\,\{T_{v}\}]\in \mathcal{T}_{1}$ such that $T_{2}$ is obtained from $T$ by contracting a unique inner edge $e$. Moreover, if $v'\in V(T_{2})$ is the vertex coming from the contraction of $e=e_{i}(t(e))$, then $T_{v'}^{2}=T_{t(e)}\circ_{i}T_{s(e)}$ and $T_{v}^{2}=T_{v}$ for $v\neq v'$. We denote by $\mathcal{D}_{k}[l]$ the Reedy category associated to the directed graph $\mathcal{G}_{k}[l]$.  By construction, $\mathcal{G}_{k}[l]$ is composed of a finite number of connected components. Each component has an initial element $\mathcal{T}_{0}=\llbracket T_{0}\,;\,\{T^{0}_{v}\}\rrbracket$ satisfying the condition $l-|V(T_{0})|=0$. This condition is equivalent to say that the auxiliary trees $T^{0}_{v}$ are corollas.\vspace{-5pt} 
\begin{figure}[!h]
\begin{center}
\includegraphics[scale=0.145]{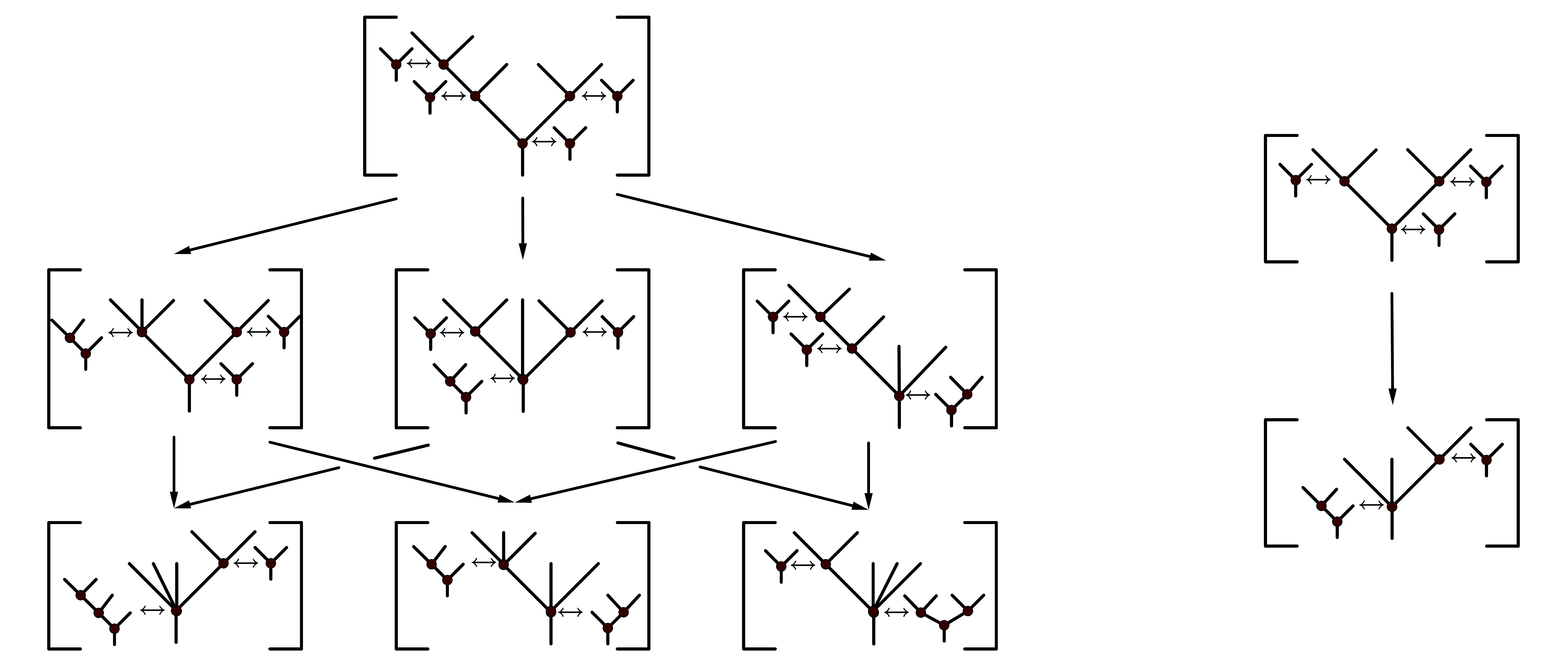}
\caption{Illustration of connected components in $\mathcal{G}_{5}[4]$ and $\mathcal{G}_{4}[3]$, respectively.}\vspace{-5pt}\label{A4}\vspace{-10pt}
\end{center}
\end{figure}

Let us consider $\mathcal{G}_{k}^{i}[l]$, with $0\leq i\leq l-2$, to be the directed graph composed of the vertices $\mathcal{T}=\llbracket T\,;\,\{T_{v}\}\rrbracket$ in $\mathcal{G}_{k}[l]$ satisfying $l-|V(T)|\in \{0\,;\, i\}$. There is an ordered pair $(\mathcal{T}_{1}\,;\,\mathcal{T}_{i})$ in $\mathcal{G}_{k}^{i}[l]$ if there exist equivalences classes $\mathcal{T}_{u}=\llbracket T_{u}\,;\,\{T^{u}_{v}\}\rrbracket$, with $2\leq u\leq i-1$, such that $(\mathcal{T}_{u}\,;\,\mathcal{T}_{u+1})$ are ordered pairs in $\mathcal{G}_{k}[l]$. We denote by $\mathcal{D}_{k}^{i}[l]$ the Reedy category associated to $\mathcal{G}_{k}^{i}[l]$. Furthermore, if $\mathcal{T}$ is an initial element in $\mathcal{G}_{k}^{i}[l]$, then we denote by $\mathcal{G}_{k}^{i}[l]_{\mathcal{T}}$ the full connected sub-graph which contains the element $\mathcal{T}$ and we denote by $\mathcal{D}_{k}^{i}[l]_{\mathcal{T}}$ the Reedy category associated to $\mathcal{G}_{k}^{i}[l]_{\mathcal{T}}$. For instance, the graph $\mathcal{G}_{5}^{2}[4]_{\mathcal{T}}$ associated to the left connected component in Figure \ref{A4} is the following one:\vspace{-4pt}
\begin{figure}[!h]
\begin{center}
\includegraphics[scale=0.13]{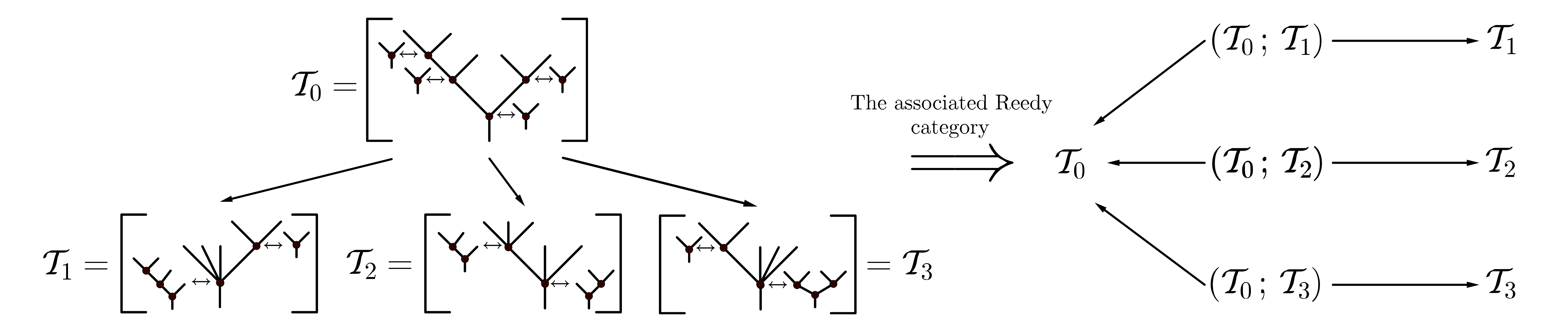}\vspace{-40pt}
\end{center}
\end{figure}
\end{defi}

\newpage

\begin{notat}\label{H6}
Let $F_{1},\,\,F_{2}:\mathcal{D}\rightarrow \mathcal{C}$ be two functors and $t:F_{1}\Rightarrow F_{2}$ be a natural transformation. For $d\in \mathcal{D}$, we denote by $[d]_{\emptyset}$ the category of objects of $\mathcal{D}$ over $d$ containing all of the objects except the identity map of $d$. The latching object of $F_{1}$ and the relative latching object of $t$ at $d$ are respectively:
$$
L_{d}(F_{1}):=\hspace{-7pt} \underset{\hspace{25pt}[d]_{\emptyset}}{colim}F_{1}
\hspace{15pt} \text{and} \hspace{15pt}
L_{d}(f):= F_{1}(d)\underset{L_{d}(F_{1})}{\coprod} L_{d}(F_{2}).
$$  
\end{notat}

\begin{thm}{\cite[Theorem 15.3.4]{Hirschhorn03}}\label{D9}
Let $\mathcal{C}$ be a model category and $\mathcal{D}$ be a reedy category such that $\mathcal{D}_{-}$ contains just the identity maps. There is a model structure on the category $Func(\mathcal{D}\,;\,\mathcal{C})$ of $\mathcal{D}$-diagram in $\mathcal{C}$, called the Reedy model category structure, in which a natural transformation $t:F_{1}\Rightarrow F_{2}$ is:
\begin{itemize}
\item[$\blacktriangleright$] a Reedy weak equivalence (resp. a fibration) if for every object $d\in \mathcal{D}$ the map $t_{d}:F_{1}(d)\rightarrow F_{2}(d)$ is a weak equivalence (resp. a fibration) in $\mathcal{C}$,
\item[$\blacktriangleright$] a Reedy cofibration if for every object $d\in \mathcal{D}$ the map $L_{d}(f)\rightarrow F_{2}(d)$ is a cofibration in $\mathcal{C}$.
\end{itemize}
\noindent In particular, $F_{1}$ is Reedy cofibrant if for every object $d\in \mathcal{D}$ the map $L_{d}(F_{1})\rightarrow F_{1}(d)$ is a cofibration in $\mathcal{C}$. Furthermore, a Reedy weak equivalence $t:F_{1}\Rightarrow F_{2}$ between Reedy cofibrant functors induces a weak equivalence between cofibrant objects in the category $\mathcal{C}$:
$$
\underset{\hspace{20pt}\mathcal{D}}{colim}F_{1}\longrightarrow\hspace{-4pt} \underset{\hspace{20pt}\mathcal{D}}{colim}F_{2}.
$$
\end{thm}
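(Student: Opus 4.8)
The plan is to follow the classical construction of the Reedy model structure, exploiting the hypothesis that $\mathcal{D}_{-}$ reduces to identities to collapse the fibration and weak-equivalence conditions to objectwise ones, and then to identify $\underset{\mathcal{D}}{colim}$ as a left Quillen functor in order to deduce the final assertion.

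First I would record the elementary axioms. Since $\mathcal{C}$ is complete and cocomplete and $\mathcal{D}$ is small, the functor category $Func(\mathcal{D}\,;\,\mathcal{C})$ admits all small limits and colimits, computed objectwise, which gives the existence of limits and colimits. The two-out-of-three and retract axioms for the three declared classes are immediate: weak equivalences and fibrations are defined objectwise, so they inherit these closure properties from $\mathcal{C}$, while closure of the Reedy cofibrations under retracts follows by applying the corresponding property in $\mathcal{C}$ to the relative latching maps $L_{d}(f)\to F_{2}(d)$ of Notation \ref{H6}. The decisive simplification coming from the hypothesis is that every non-identity map of $\mathcal{D}$ lies in $\mathcal{D}_{+}$ and hence strictly raises degree; consequently there are no non-identity endomorphisms and every object of $[d]_{\emptyset}$ has degree strictly smaller than that of $d$. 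Dually, the matching objects are all terminal, so the relative matching condition defining a Reedy fibration collapses precisely to the objectwise condition stated, and likewise $F_{1}$ Reedy cofibrant reduces to the stated latching condition applied to the map from the initial object.

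The heart of the argument is the factorization axiom together with the lifting axiom, both proved by induction on the degree. Given a natural transformation $t:F_{1}\Rightarrow F_{2}$, I would build a functorial factorization $F_{1}\to W\to F_{2}$ one degree at a time: assuming $W$ has been constructed on all objects of degree $<n$ together with its structure maps, for an object $d$ of degree $n$ the latching object $L_{d}(W)$ depends only on these lower-degree values, so one may apply a functorial factorization in $\mathcal{C}$ to the canonical map $F_{1}(d)\underset{L_{d}(F_{1})}{\coprod}L_{d}(W)\to F_{2}(d)$ in order to define $W(d)$ together with the two maps through it. Well-foundedness of this recursion is exactly guaranteed by the degree estimate above. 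The lifting axiom proceeds in the same inductive spirit: to solve a lifting problem between a Reedy (trivial) cofibration and a Reedy (trivial) fibration, one extends a partial lift from degree $<n$ to degree $n$ by solving, at each such $d$, a single lifting problem in $\mathcal{C}$ whose left map is the relative latching map at $d$ and whose right map is $t_{d}$. I expect the main obstacle to be the bookkeeping that forces these degreewise choices to assemble into genuine natural transformations: one must verify that the maps produced on objects of degree $n$ are compatible with all degree-raising morphisms, which is precisely where the universal properties of the latching colimits have to be invoked with care.

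Finally, for the colimit statement I would exhibit $\underset{\mathcal{D}}{colim}$ as the left adjoint of the constant-diagram functor $const:\mathcal{C}\to Func(\mathcal{D}\,;\,\mathcal{C})$. Because fibrations and trivial fibrations in the Reedy structure are objectwise, $const$ carries (trivial) fibrations of $\mathcal{C}$ to Reedy (trivial) fibrations, so this adjunction is a Quillen adjunction and $\underset{\mathcal{D}}{colim}$ is left Quillen. A left Quillen functor preserves cofibrant objects, whence $\underset{\mathcal{D}}{colim}F_{1}$ and $\underset{\mathcal{D}}{colim}F_{2}$ are cofibrant in $\mathcal{C}$; and by Ken Brown's lemma it sends weak equivalences between cofibrant objects to weak equivalences. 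Applying this to the Reedy weak equivalence $t$ between the Reedy cofibrant functors $F_{1}$ and $F_{2}$ yields the desired weak equivalence $\underset{\mathcal{D}}{colim}F_{1}\to\underset{\mathcal{D}}{colim}F_{2}$ between cofibrant objects of $\mathcal{C}$.
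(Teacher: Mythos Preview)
The paper does not supply a proof of this theorem: it is quoted verbatim as \cite[Theorem 15.3.4]{Hirschhorn03} and used as a black box, so there is no in-paper argument to compare against. Your proposal is a correct outline of the standard proof one finds in Hirschhorn, with the appropriate simplifications for the case $\mathcal{D}_{-}=\{\mathrm{id}\}$ (matching objects terminal, hence objectwise fibrations; induction on degree for factorizations and liftings via relative latching maps), and your deduction of the colimit statement via the Quillen adjunction $\underset{\mathcal{D}}{colim}\dashv const$ together with Ken Brown's lemma is exactly the right way to obtain the ``furthermore'' clause.
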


Unfortunately, in the next subsection we introduce functors $\mathbb{F}_{i}^{-}$ and $\mathbb{F}_{i}$ from the Reedy category $\mathcal{D}_{k}^{i}[l]$ to sequences which are not necessarily Reedy cofibrant. However, it doesn't mean that the induced colimits are not cofibrant or that any Reedy weak equivalence between them doesn't induce a weak equivalence between the colimits. For instance, the pushout diagram $\ast\leftarrow S^{1}\rightarrow D^{2}$ in the category of spaces is not Reedy cofibrant since the map $S^{1}\rightarrow \ast$ is not a cofibration. Nevertheless, the colimit $S^{2}$ is cofibrant and the Reedy weak equivalence to the pushout diagram $D^{2}\leftarrow S^{1} \rightarrow D^{2}$ induces a weak equivalence between the colimits. More precisely, one has the following statements: 

\begin{pro}{\cite[Proposition 2.6]{Chacholski02}}\label{D1}
In the case $\mathcal{D}:=(\ast_{1}\longleftarrow \ast_{2}\longrightarrow \ast_{3})$, a natural transformation $t:F_{1}\Rightarrow F_{2}$:
$$
\xymatrix{
\underset{\hspace{20pt}\mathcal{D}}{colim}F_{1}\ar[d]^{f} \ar@{=}[r] & \underset{\hspace{20pt}\mathcal{D}}{colim}\big(\hspace{-31pt} & F_{1}(\ast_{1})\ar[d]_{t_{1}} &\ar[l]\ar[r]\ar[d]_{t_{2}} F_{1}(\ast_{2}) & \ar[d]_{t_{3}} F_{1}(\ast_{3})\big)\\
\underset{\hspace{20pt}\mathcal{D}}{colim}F_{2} \ar@{=}[r] &  \underset{\hspace{20pt}\mathcal{D}}{colim}\big(\hspace{-31pt} & F_{2}(\ast_{1}) &\ar[l]\ar[r] F_{2}(\ast_{2}) & F_{2}(\ast_{3})\big)
}
$$
induces a cofibration $f$ if the maps $t_{3}$ and $L_{1}(t)\rightarrow F_{2}(\ast_{1})$ are cofibrations. Furthermore, $f$ is a weak equivalence between cofibrant objects if the natural transformation $t$ is a Reedy weak equivalence, the object $F_{i}(\ast_{3})$ is cofibrant and the map $F_{i}(\ast_{2})\rightarrow F_{i}(\ast_{1})$ is a cofibration, with $i\in \{1\,;\,2\}$.
\end{pro}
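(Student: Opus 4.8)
The plan is to compute both colimits explicitly and to factor the induced map $f$ into a short chain of cobase changes. Since $\mathcal{D}=(\ast_{1}\leftarrow\ast_{2}\rightarrow\ast_{3})$ is the pushout shape, $\mathrm{colim}_{\mathcal{D}}F_{i}=F_{i}(\ast_{1})\coprod_{F_{i}(\ast_{2})}F_{i}(\ast_{3})$; moreover $L_{\ast_{2}}(F_{i})$ is the initial object while $L_{\ast_{1}}(F_{i})=L_{\ast_{3}}(F_{i})=F_{i}(\ast_{2})$, so the relative latching object in the statement is $L_{1}(t)=F_{1}(\ast_{1})\coprod_{F_{1}(\ast_{2})}F_{2}(\ast_{2})$, equipped with the map to $F_{2}(\ast_{1})$ built from $t_{1}$ and from $F_{2}(\ast_{2})\to F_{2}(\ast_{1})$. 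For the cofibration assertion, put $Q:=(\mathrm{colim}_{\mathcal{D}}F_{1})\coprod_{F_{1}(\ast_{3})}F_{2}(\ast_{3})$, the pushout of $t_{3}$ along the canonical map $F_{1}(\ast_{3})\to\mathrm{colim}_{\mathcal{D}}F_{1}$, and factor $f$ as $\mathrm{colim}_{\mathcal{D}}F_{1}\xrightarrow{a}Q\xrightarrow{b}\mathrm{colim}_{\mathcal{D}}F_{2}$. Then $a$ is a cobase change of $t_{3}$, hence a cofibration. Iterating the pasting law for pushouts and feeding in the naturality squares of $t$, one identifies $Q$ first with $F_{1}(\ast_{1})\coprod_{F_{1}(\ast_{2})}F_{2}(\ast_{3})$, then (inserting $F_{2}(\ast_{2})$ along $t_{2}$) with $L_{1}(t)\coprod_{F_{2}(\ast_{2})}F_{2}(\ast_{3})$, and finally obtains $\mathrm{colim}_{\mathcal{D}}F_{2}=Q\coprod_{L_{1}(t)}F_{2}(\ast_{1})$; hence $b$ is a cobase change of $L_{1}(t)\to F_{2}(\ast_{1})$, a cofibration, and $f=b\circ a$ is a cofibration.

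For the second assertion I would first record that $\mathrm{colim}_{\mathcal{D}}F_{i}$ is cofibrant: the canonical map $F_{i}(\ast_{3})\to\mathrm{colim}_{\mathcal{D}}F_{i}$ is the cobase change of the cofibration $F_{i}(\ast_{2})\to F_{i}(\ast_{1})$, hence a cofibration, and its source $F_{i}(\ast_{3})$ is cofibrant by hypothesis. To see that $f$ is a weak equivalence, use that the ambient category (sequences, and diagrams of sequences in the applications) is left proper, so that a pushout along a cofibration computes the homotopy pushout: since $F_{i}(\ast_{2})\to F_{i}(\ast_{1})$ is a cofibration, $\mathrm{colim}_{\mathcal{D}}F_{i}$ is the homotopy pushout of $F_{i}(\ast_{1})\leftarrow F_{i}(\ast_{2})\to F_{i}(\ast_{3})$. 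Because $t$ is an objectwise weak equivalence it induces a weak equivalence between these homotopy pushouts by homotopy invariance of homotopy colimits; comparing with the strict pushouts through the canonical comparison maps (themselves weak equivalences) and applying the two-out-of-three axiom shows $f$ is a weak equivalence. When $F_{i}(\ast_{2})$ happens to be cofibrant one may instead invoke the usual gluing lemma for the span $F_{i}(\ast_{1})\leftarrow F_{i}(\ast_{2})\to F_{i}(\ast_{3})$, whose left leg is a cofibration.

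The main obstacle is exactly that these spans need not be Reedy cofibrant: only one of the two legs $F_{i}(\ast_{2})\to F_{i}(\ast_{1})$ is a cofibration and the apex $F_{i}(\ast_{2})$ is uncontrolled, so neither Theorem \ref{D9} nor the naive cube lemma applies directly — the pushout diagram $\ast\leftarrow S^{1}\rightarrow D^{2}$ recalled above shows the gap is genuine. Left properness is what closes it, by ensuring that pushing the weak equivalences $t_{1},t_{2},t_{3}$ out along the available cofibrations preserves weak equivalences; granted that, all the remaining work — the identification $\mathrm{colim}_{\mathcal{D}}F_{2}=Q\coprod_{L_{1}(t)}F_{2}(\ast_{1})$, and the fact that $L_{1}(t)\to F_{2}(\ast_{1})$ is itself a weak equivalence when $t$ is one (it is the cobase change of $t_{2}$ along the cofibration $F_{1}(\ast_{2})\to F_{1}(\ast_{1})$, composed with two-out-of-three against $t_{1}$) — is routine pushout bookkeeping powered only by naturality of $t$.
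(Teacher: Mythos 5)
The paper offers no proof of this statement at all: it is quoted directly from Chach\'olski--Scherer, so there is no internal argument to compare yours against and I am judging it on its own. Your proof of the cofibration half is correct and complete. The identification $L_{1}(t)=F_{1}(\ast_{1})\coprod_{F_{1}(\ast_{2})}F_{2}(\ast_{2})$ is right for the Reedy structure in play (only $\ast_{2}\to\ast_{1}$ contributes to the latching category at $\ast_{1}$), and the factorization of $f$ through $Q=\mathrm{colim}_{\mathcal{D}}F_{1}\coprod_{F_{1}(\ast_{3})}F_{2}(\ast_{3})$, with the first map a cobase change of $t_{3}$ and the second exhibited, via the pasting law and naturality of $t$, as a cobase change of $L_{1}(t)\to F_{2}(\ast_{1})$, is exactly the standard argument; your pushout bookkeeping checks out. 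The cofibrancy of the two colimits in the second half is also correctly obtained from the cobase change of $F_{i}(\ast_{2})\to F_{i}(\ast_{1})$ together with the cofibrancy of $F_{i}(\ast_{3})$.

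The one point to be honest about is that your proof of the weak-equivalence half proves a slightly different statement. The proposition, as quoted, carries no properness hypothesis, whereas your argument leans essentially on left properness of the target category (to identify the strict pushouts with homotopy pushouts, equivalently to invoke the left-proper gluing lemma, and again to see that $F_{1}(\ast_{1})\to L_{1}(t)$ and hence $L_{1}(t)\to F_{2}(\ast_{1})$ are weak equivalences). This is not a cosmetic shortcut: without left properness the available cube lemma requires all three objects of each span to be cofibrant, and it is precisely the cofibrancy of $F_{i}(\ast_{2})$ that fails in the situations the paper cares about, as its own example $\ast\leftarrow S^{1}\to D^{2}$ is meant to illustrate. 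So if you want the proposition in the stated generality you must follow the cited reference rather than your route. That said, every category in which the paper actually applies Proposition \ref{D1} --- spaces, $\Sigma_{n}$-spaces, and sequences, with weak equivalences and pushouts computed on underlying spaces --- is left proper, so your argument does establish everything the paper needs; I would only ask you to state the left properness assumption explicitly rather than leaving it inside the phrase ``the ambient category is left proper,'' and to note that only the first half of your factorization (the cobase change of $t_{3}$ along the cofibration $F_{1}(\ast_{3})\to\mathrm{colim}_{\mathcal{D}}F_{1}$, where all three corners are cofibrant) goes through in an arbitrary model category.
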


\begin{cor}\label{A8}
Let $t:F_{1}\Rightarrow F_{2}$ be a Reedy weak equivalence between functors from the Reedy category $\mathcal{D}_{n}$, illustrated in Figure \ref{A5}, to a model category $\mathcal{C}$. For $i\in \{1\,;\,2\}$, if $F_{i}(g_{0})$ is cofibrant and the morphisms $F_{i}(d_{1})$ are cofibrations, then the natural transformation $t$ induces a weak equivalence between cofibrant objects in the category $\mathcal{C}$:
$$
\underset{\hspace{20pt}\mathcal{D}_{n}}{colim}F_{1}\longrightarrow\hspace{-4pt} \underset{\hspace{20pt}\mathcal{D}_{n}}{colim}F_{2}.
$$
\end{cor}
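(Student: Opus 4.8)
The plan is to realise $\mathrm{colim}_{\mathcal{D}_{n}}$ as an iterated pushout, attaching the legs of the fan $\mathcal{G}_{n}$ one at a time and applying Proposition~\ref{D1} at each stage. Write $g_{0}$ for the central vertex of $\mathcal{G}_{n}$, write $g_{1},\ldots,g_{n}$ for the remaining vertices, and for each $1\le j\le n$ let $(g_{0};g_{j})$ be the corresponding ordered pair, with $d_{0}\colon(g_{0};g_{j})\to g_{0}$ and $d_{1}\colon(g_{0};g_{j})\to g_{j}$. For any functor $F\colon\mathcal{D}_{n}\to\mathcal{C}$ one then has $\mathrm{colim}_{\mathcal{D}_{n}}F=C_{n}(F)$, where $C_{0}(F):=F(g_{0})$ and, for $1\le j\le n$,
\[
C_{j}(F):=C_{j-1}(F)\underset{F(g_{0};g_{j})}{\coprod}F(g_{j}),
\]
the map $F(g_{0};g_{j})\to C_{j-1}(F)$ being $F(d_{0})$ followed by the structural map $F(g_{0})\to C_{j-1}(F)$ and the map $F(g_{0};g_{j})\to F(g_{j})$ being $F(d_{1})$. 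The order in which the legs are added is immaterial, since each $C_{j}(F)$ computes the colimit of the corresponding full subcategory. A natural transformation $t\colon F_{1}\Rightarrow F_{2}$ restricts at the $j$-th stage to a natural transformation between the two span diagrams $\big(F_{i}(g_{j})\longleftarrow F_{i}(g_{0};g_{j})\longrightarrow C_{j-1}(F_{i})\big)$, whose colimits are $C_{j}(F_{1})$ and $C_{j}(F_{2})$.

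I would then prove by induction on $j$ that $C_{j}(F_{1})$ and $C_{j}(F_{2})$ are cofibrant and that the induced map $C_{j}(F_{1})\to C_{j}(F_{2})$ is a weak equivalence. The case $j=0$ is exactly the hypothesis that $F_{i}(g_{0})$ is cofibrant, together with the fact that $t_{g_{0}}$ is a weak equivalence ($g_{0}$ being an object of $\mathcal{D}_{n}$ and $t$ a Reedy weak equivalence). For the inductive step one applies Proposition~\ref{D1} to the span category $\mathcal{D}=(\ast_{1}\leftarrow\ast_{2}\rightarrow\ast_{3})$, with $F_{i}$ sending $\ast_{1}\mapsto F_{i}(g_{j})$, $\ast_{2}\mapsto F_{i}(g_{0};g_{j})$ and $\ast_{3}\mapsto C_{j-1}(F_{i})$, the left leg being $F_{i}(d_{1})$ and the right leg being $F_{i}(d_{0})$ composed with $F_{i}(g_{0})\to C_{j-1}(F_{i})$. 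The hypotheses of Proposition~\ref{D1} hold: the map $F_{i}(\ast_{2})\to F_{i}(\ast_{1})$ is $F_{i}(d_{1})$, a cofibration by assumption; the object $F_{i}(\ast_{3})=C_{j-1}(F_{i})$ is cofibrant by the inductive hypothesis; and $t$ is a Reedy weak equivalence on this span because $t_{g_{j}}$ and $t_{(g_{0};g_{j})}$ are weak equivalences while $C_{j-1}(F_{1})\to C_{j-1}(F_{2})$ is one by the inductive hypothesis. Hence $C_{j}(F_{1})\to C_{j}(F_{2})$ is a weak equivalence between cofibrant objects; one may also see directly that $C_{j}(F_{i})$ is cofibrant, since $C_{j-1}(F_{i})\to C_{j}(F_{i})$ is the pushout of the cofibration $F_{i}(d_{1})$ along a map out of the cofibrant object $C_{j-1}(F_{i})$. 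Taking $j=n$ yields the statement.

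The argument is essentially bookkeeping; the only point requiring attention is that at each stage the object playing the role of $\ast_{3}$, namely the running colimit $C_{j-1}$, must simultaneously be cofibrant and be the source of a weak equivalence — and this is precisely what the two conclusions of Proposition~\ref{D1} reproduce, which is what keeps the induction going. No Reedy cofibrancy of $F_{1}$ or $F_{2}$ over $\mathcal{D}_{n}$ is needed, and indeed none holds in general (cf.\ the discussion preceding Proposition~\ref{D1}); this is why Theorem~\ref{D9} cannot be applied directly and must be replaced by the finer Proposition~\ref{D1}.
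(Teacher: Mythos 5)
Your proof is correct, but it organizes the argument differently from the paper. The paper rewrites $\mathrm{colim}_{\mathcal{D}_{n}}F_{i}$ in one step as the colimit of the star-shaped diagram whose centre is $F_{i}(g_{0})$ and whose $j$-th tip is the pushout $F_{i}(g_{0})\coprod_{F_{i}(g_{0};g_{j})}F_{i}(g_{j})$; since each structural map $f_{j}^{i}$ is a pushout of the cofibration $F_{i}(d_{1})$, that reorganized diagram is Reedy cofibrant with cofibrant vertices, and the conclusion follows from a single appeal to Theorem~\ref{D9} (the objectwise weak equivalences at the tips being supplied, implicitly, by Proposition~\ref{D1}). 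You instead attach the legs one at a time and run an induction in which Proposition~\ref{D1} is the only input and Theorem~\ref{D9} is never invoked: the running colimit $C_{j-1}$ plays the role of $\ast_{3}$, and the two conclusions of Proposition~\ref{D1} --- cofibrancy of the resulting pushout and invariance of the induced map --- are precisely what propagates the induction. The two decompositions compute the same wide pushout, so the difference is one of bookkeeping rather than substance; if anything, your serial version makes explicit a point the paper leaves tacit, namely that the comparison maps between the individual pushouts must first be shown to be weak equivalences (via the gluing statement of Proposition~\ref{D1}) before Theorem~\ref{D9} can be applied to the reorganized star diagram.
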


\begin{proof}
For $i\in \{1\,;\,2\}$, the colimit of the functor $F_{i}$ is homeomorphic to the colimit of the diagram
$$
\xymatrix@R=2pt{
 & F_{i}(g_{0})\underset{F_{i}(g_{0}\,;\,g_{1})}{\coprod} F_{i}(g_{1})\ar@{.}[dd] & & & \\
 & & & F_{i}(g_{0}\,;\,g_{j}) \ar[r]^{F_{i}(d_{1})} \ar[dd]_{F_{i}(d_{0})} & F_{i}(g_{j})\ar[dd]\\
 F_{i}(g_{0})\ar[ruu]^{f_{1}^{i}} \ar[r]^{\hspace{-15pt}f_{j}^{i}} \ar[rdd]_{f_{n}^{i}} & F_{i}(g_{0})\underset{F_{i}(g_{0}\,;\,g_{j})}{\coprod} F_{i}(g_{j})\ar@{.}[dd] & \text{with the pushout diagrams} & & \\
 & & & F_{i}(g_{0}) \ar[r]^{\hspace{-20pt}f_{j}^{i}} & F_{i}(g_{0})\underset{F_{i}(g_{0}\,;\,g_{j})}{\coprod} F_{i}(g_{j})\\
 & F_{i}(g_{0})\underset{F_{i}(g_{0}\,;\,g_{n})}{\coprod} F_{i}(g_{n}) & & &
} 
$$ 
Since the pushout diagrams preserve the cofibrations and the morphisms $F(d_{1})$ are supposed to be cofibrations, the morphisms $f_{i}$ are also cofibrations. So, the left diagram is Reedy cofibrant. As a consequence of Theorem \ref{D9}, the natural transformation $t$ induces a weak equivalence between cofibrant objects.\vspace{-20pt}    
\end{proof}

\newpage

\subsection{Induction: The morphism $\xi_{k}[l]$ is a weak equivalence}

The aim of this subsection is to prove that the inclusion from $Y^{(1)}_{k}[l]\cap \partial Y_{k}[l]$ to $Y_{k}^{(1)}[l]$, introduced in Section \ref{H7}, is a homotopy equivalence. Since all the objects considered are fibrant, it is sufficient to prove that the inclusion is a weak equivalence between cofibrant sequences. First, we describe a cellular decomposition indexed by the directed graph $\mathcal{G}_{k}[l]$ (see Definition \ref{A3}) of the two sequences taking into account the symmetric group axiom of Construction \ref{C0}. Then, using the Reedy categories $\mathcal{D}_{k}^{i}[l]$, we give an alternative construction by induction of the two sequences and we prove in each step of the construction that the inclusion is a homotopy equivalence. More precisely, for $0\leq i \leq l-2$ we introduce the following functors: 
$$
\mathbb{F}_{i}^{-};\mathcal{D}_{k}^{i}[l]\longrightarrow Seq \hspace{15pt}\text{and}\hspace{15pt} \mathbb{F}_{i}^{-};\mathcal{D}_{k}^{i}[l]\longrightarrow Seq,
$$
as well as natural transformations $t_{i}:\mathbb{F}_{i}^{-}\Rightarrow \mathbb{F}_{i}$ such that one has the following properties:
\begin{itemize}
\item[$\blacktriangleright$] the colimits of the functors $\mathbb{F}_{l-2}^{-}$ and $\mathbb{F}_{l-2}$ are the sequences $Y^{(1)}_{k}[l]\cap \partial Y_{k}[l]$ and $Y_{k}^{(1)}[l]$ respectively,
\item[$\blacktriangleright$]  the functors $\mathbb{F}_{i}^{-}$ and $\mathbb{F}_{i}$ are obtained from the colimit of the functors $\mathbb{F}_{i-1}^{-}$ and $\mathbb{F}_{i-1}$ respectively,
\item[$\blacktriangleright$] the functors $\mathbb{F}_{i}^{-}$ and $\mathbb{F}_{i}$ satisfy the conditions of Corollary \ref{A8}, 
\item[$\blacktriangleright$] the natural transformations $t_{i}:\mathbb{F}_{i}^{-}\Rightarrow \mathbb{F}_{i}$ are Reedy weak equivalences in $Func(\mathcal{D}_{k}^{i}[l]\,;\,Seq)$.
\end{itemize}

\subsubsection{Cellular decompositions of the sequences $Y^{(1)}_{k}[l]\cap \partial Y_{k}[l]$ and $Y_{k}^{(1)}[l]$}

First, one has to fix some notation. Let $[T\,;\,\{T_{v}\}]$ be an element in $\phi_{k}^{p}[l]$. Two permutations $\sigma,\,\,\sigma'\in \Sigma_{|v'|}$, with $v'\in V(T_{v})$, are said to be equivalent according to $[T\,;\,\{T_{v}\}]$ if the non-planar isomorphisms induced by $\sigma$ and $\sigma'$ are equal up to an automorphism. In other words, there exists $\tau\in \Sigma_{|v'|}$ such that the non-planar isomorphism induced by $\tau$ is an automorphism in $Aut([T\,;\,\{T_{v}\}])$ and $\sigma=\sigma'\circ\tau$. This provides an equivalence relation $\sim$ on the symmetric group $\Sigma_{|v'|}$ and we fix a set of representative elements $Stab([T\,;\,\{T_{v}\}]\,;\,v')$ in which the identity permutation is the representative element of the automorphism:
$$
Stab([T\,;\,\{T_{v}\}]\,;\,v')\in \left\{ \left.
\{\sigma_{i}\}\in \Sigma_{|v'|}^{\times |\Sigma_{|v'|}/\sim|}\,\right|\, \sigma_{1}=id \text{ and } [\sigma_{i}]\neq [\sigma_{j}] \text{ if } i\neq j
\right\}.
$$

Let $\mathcal{T}=\llbracket T\,;\,\{T_{v}\}\rrbracket$ be a vertex in the directed graph $\mathcal{G}_{k}[l]$ and let $[T\,;\,\{T_{v}\}]$ be a representative element. In order to introduce the space labelling the vertices of $[T\,;\,\{T_{v}\}]$ taking into account the symmetric group axiom of Construction \ref{C0}, let us remark that the operad $O$ is $\Sigma$-cofibrant and each space $O(n)$ is of the form $\Sigma_{n}[V_{n}]$ where $\Sigma_{n}[-]:Top\rightarrow \Sigma_{n}\text{-}Top$ is the free functor and $V_{n}$ is a cofibrant space. Then we consider the following spaces:
$$
M(\mathcal{T})= \underset{v\in V(T)}{\prod}\,\,\underset{v'\in V(T_{v})}{\prod} V_{|v'|}\times Stab([T\,;\,\{T_{v}\}]\,;\,v')
\hspace{15pt}\text{and}\hspace{15pt} 
H(\mathcal{T})\subset\underset{v\in V(T)}{\prod} [0\,,\,1]\,\, \times \underset{e\in E^{int}(T_{v})}{\prod} [0\,,\,1],
$$ 
where $H(\mathcal{T})$ is the subspace formed by points satisfying the restriction of Construction \ref{C0} on the real numbers $\{t_{v}\}$ indexing the vertices of the main tree $T$. It means that if $e$ is an inner edge of $T$, then one has $t_{s(e)}\geq t_{t(e)}$. Finally, the following sequences give rise to a cellular decomposition of the sequence $Y_{k}^{(1)}[l]$:
$$
\mathbb{F}(\mathcal{T})=M(\mathcal{T}) \times H(\mathcal{T}) \times \Sigma_{|T|}.
$$

To introduce the cellular decomposition of the sequence $Y^{(1)}_{k}[l]\cap \partial Y_{k}[l]$, we consider the subspace $M^{-}(\mathcal{T})$ formed by points in $M(\mathcal{T})$ having at least one bivalent vertex labelled by the unit $\ast_{1}$ of the operad $O$. Similarly, we introduced the subspace $H^{-}(\mathcal{T})$ formed by families of real numbers in $H(\mathcal{T})$ having inner edges of auxiliary trees indexed by $0$ or having vertices of the main tree indexed by $0$ or $1$. Then, we consider the following sequence:
$$
\mathbb{F}^{-}(\mathcal{T})=(M\times H)^{-}(\mathcal{T})\times \Sigma_{|T|} 
\hspace{15pt}\text{with}\hspace{15pt}
(M\times H)^{-}(\mathcal{T})= M^{-}(\mathcal{T}) \times H(\mathcal{T}) \underset{M^{-}(\mathcal{T}) \times H^{-}(\mathcal{T})}{\coprod} M(\mathcal{T}) \times H^{-}(\mathcal{T}).
$$

Finally, the inclusions $M^{-}(\mathcal{T})\rightarrow  M(\mathcal{T})$ and $H^{-}(\mathcal{T})\rightarrow  H(\mathcal{T})$ induce a morphism from the sequence $
\mathbb{F}^{-}(\mathcal{T})$ to $\mathbb{F}(\mathcal{T})$. As proved in the next proposition, these two cellular decompositions are weakly equivalent in the sense that, for each $\mathcal{T}=\llbracket T\,;\,\{T_{v}\}\rrbracket$, the morphism $\mathbb{F}^{-}(\mathcal{T})\rightarrow\mathbb{F}(\mathcal{T})$ is a weak equivalence. However, it is not sufficient to conclude that the sequences $Y^{(1)}_{k}[l]\cap \partial Y_{k}[l]$ and $Y_{k}^{(1)}[l]$ are weakly equivalent since these cellular decompositions don't take into account the axiom $(iii)$ of Construction \ref{C0}. 

\newpage

\begin{pro}\label{B1}
Let $\mathcal{T}=\llbracket T\,;\,\{T_{v}\}\rrbracket$ be an element in the directed graph $\mathcal{G}_{k}[l]$. The morphism of sequences from $\mathbb{F}^{-}(\mathcal{T})$ to $\mathbb{F}(\mathcal{T})$ is a weak equivalence.
\end{pro}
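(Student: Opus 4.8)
The plan is to discard the discrete factor $\Sigma_{|T|}$, which is irrelevant to the homotopy type, and to recognise the remaining map $(M\times H)^{-}(\mathcal{T})\rightarrow M(\mathcal{T})\times H(\mathcal{T})$ as a pushout product. Indeed, by construction $(M\times H)^{-}(\mathcal{T})$ is exactly the domain of the pushout product of the two inclusions $i\colon M^{-}(\mathcal{T})\hookrightarrow M(\mathcal{T})$ and $j\colon H^{-}(\mathcal{T})\hookrightarrow H(\mathcal{T})$. Hence, by Lemma \ref{refB} (taken with the trivial group), it is enough to check that $i$ is a cofibration of spaces and that $j$ is an \emph{acyclic} cofibration of spaces.

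For $i$ I would use that $O$ is well pointed and $\Sigma$-cofibrant: the space $O(1)$ is of the form $\Sigma_{1}[V_{1}]=V_{1}$ with $V_{1}$ cofibrant, and well pointedness means precisely that $\ast_{1}\hookrightarrow V_{1}$ is a cofibration. Listing the bivalent vertices $v'_{1},\ldots,v'_{s}$ of the auxiliary trees (those with $|v'_{j}|=1$, for which $Stab([T\,;\,\{T_{v}\}]\,;\,v'_{j})$ is a point), the space $M(\mathcal{T})$ is a product $\big(\prod_{j}V_{1}\big)\times W$ for a fixed space $W$ of labels and stabilisers of the remaining vertices, and $M^{-}(\mathcal{T})$ is the product of $W$ with the fat wedge $\bigcup_{j}\big(\{\ast_{1}\}\times\prod_{j'\neq j}V_{1}\big)\subset\prod_{j}V_{1}$; that fat wedge inclusion is an iterated pushout product of copies of $\ast_{1}\hookrightarrow V_{1}$, hence a cofibration, and so is $i$. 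For $j$ I would decompose $H(\mathcal{T})=P\times C$, where $P\subset[0\,,\,1]^{V(T)}$ is the polytope defined by the inequalities $t_{s(e)}\geq t_{t(e)}$ of Construction \ref{C0} over the inner edges $e$ of the main tree $T$, and $C$ is the cube over the inner edges of the auxiliary trees. Then $H^{-}(\mathcal{T})=(P^{-}\times C)\cup(P\times C^{-})$, where $P^{-}\subset P$ is the locus with some $t_{v}\in\{0\,,\,1\}$ and $C^{-}\subset C$ the locus with some coordinate equal to $0$; so $j$ is the pushout product of the inclusions $P^{-}\hookrightarrow P$ and $C^{-}\hookrightarrow C$. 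Both of these are inclusions of unions of faces, hence cofibrations, so by Lemma \ref{refB} the whole statement reduces to showing that $P^{-}\hookrightarrow P$ is a weak equivalence.

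This reduction is where the real content lies, and I expect it to be the only delicate step. The polytope $P$ is convex, hence contractible, so what remains is to show that $P^{-}$ is contractible. The root $r$ is the lowest vertex of the main tree, so the inequalities force $t_{r}\leq t_{v}$ for all $v$, whence $t_{v}=0$ for some $v$ implies $t_{r}=0$; dually, $t_{v}=1$ for some $v$ implies $t_{w}=1$ for every maximal vertex $w$ lying above $v$. Therefore $P^{-}=\{t_{r}=0\}\cup\bigcup_{w\ \text{maximal}}\{t_{w}=1\}$, a finite union of convex subsets of $P$ (intersections of $P$ with coordinate hyperplanes). Since $T$ is not a corolla, $r$ is not maximal, so the point $p\in[0\,,\,1]^{V(T)}$ with $t_{r}=0$ and $t_{v}=1$ for $v\neq r$ lies in $P$ and in each of these convex pieces; the straight-line homotopy $(x,\tau)\mapsto(1-\tau)x+\tau p$ preserves each piece, hence preserves $P^{-}$, and contracts it onto $p$. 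Thus $P^{-}$ is contractible, $P^{-}\hookrightarrow P$ is a weak equivalence, and the pushout product axiom (Lemma \ref{refB}) yields that $j$ is an acyclic cofibration. Combining this with the fact that $i$ is a cofibration gives that $(M\times H)^{-}(\mathcal{T})\rightarrow M(\mathcal{T})\times H(\mathcal{T})$, and hence $\mathbb{F}^{-}(\mathcal{T})\rightarrow\mathbb{F}(\mathcal{T})$, is a weak equivalence.
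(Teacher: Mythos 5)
Your proof is correct, and its skeleton is the same as the paper's: discard the discrete $\Sigma_{|T|}$ factor, observe that $(M\times H)^{-}(\mathcal{T})\rightarrow M(\mathcal{T})\times H(\mathcal{T})$ is the pushout product of $M^{-}(\mathcal{T})\rightarrow M(\mathcal{T})$ and $H^{-}(\mathcal{T})\rightarrow H(\mathcal{T})$, invoke the pushout-product lemma, and reduce everything to a contractibility statement about the parameter space. Where you diverge is in the execution of that last step. The paper does not factor $H(\mathcal{T})$ any further: it proves directly that both $H^{-}(\mathcal{T})$ and $H(\mathcal{T})$ are contractible by an explicit two-stage piecewise-linear homotopy $\mathcal{H}$ (first sending the vertices of $Max(\mathcal{T})$ to $1$, the root to $0$ and the remaining vertices to $1/2$, then sending the auxiliary edge parameters to $1$), and the fact that this homotopy preserves $H^{-}(\mathcal{T})$ rests implicitly on exactly the observation you make explicit: a vertex parameter equal to $0$ forces $t_{r}=0$, and a vertex parameter equal to $1$ forces $t_{w}=1$ for some maximal $w$. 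You instead split $H(\mathcal{T})=P\times C$, apply the pushout-product lemma a second time to isolate $P^{-}\hookrightarrow P$, identify $P^{-}$ as a finite union of faces of the polytope $P$ all containing the point $p=(t_{r}=0,\,t_{v}=1\text{ for }v\neq r)$, and contract by the straight-line homotopy. This buys a cleaner, formula-free contractibility argument and makes the role of the hypothesis that $T$ is not a corolla (namely, that $r$ is not maximal, so $p$ lies in every piece) completely transparent; the paper's version buys a single self-contained homotopy that it can reuse verbatim for the subspaces $H^{-}(\mathcal{T}\,;\,\mathcal{T}')\rightarrow H(\mathcal{T}\,;\,\mathcal{T}')$ in the proof of Proposition \ref{B6}, so if you adopt your route you should check that your convexity argument restricts to those subspaces as well (it does, since requiring an auxiliary edge parameter to equal $1$ only cuts $C$, not $P$).
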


\begin{proof}
First, we have to show that the inclusion from $(M\times H)^{-}(\mathcal{T})$ to $M(\mathcal{T})\times H(\mathcal{T})$ is a weak equivalence in the category of topological spaces. Since the operad $O$ is supposed to be well pointed, the inclusion $M^{-}(\mathcal{T})\rightarrow M(\mathcal{T})$ is a cofibration. Similarly, the inclusion $H^{-}(\mathcal{T})\rightarrow H(\mathcal{T})$ is a cofibration as an inclusion of $CW$-complexes. As a consequence of Lemma \ref{refA}, the pushout product map
$$
(M\times H)^{-}(\mathcal{T})= M^{-}(\mathcal{T})\times H(\mathcal{T})\underset{M^{-}(\mathcal{T})\times H^{-}(\mathcal{T})}{\coprod} M(\mathcal{T})\times H^{-}(\mathcal{T}) \longrightarrow M(\mathcal{T})\times H(\mathcal{T})
$$
is an acyclic cofibration if the inclusion $H^{-}(\mathcal{T})\rightarrow H(\mathcal{T})$ is a weak equivalence. For this purpose, it is sufficient to prove that the $CW$-complexes $H^{-}(\mathcal{T})$ and $H(\mathcal{T})$ are contractible. In order to define the homotopies, we consider the following set:
$$
Max(\mathcal{T}):=\left\{ \left.
v\in V(T)\,
\right|
\, \nexists e\in E^{int}(T),\,\,t(e)=v
\right\}.
$$
Since $[T\,;\,\{T_{v}\}]\in \phi_{k}^{p}[l]$, the main tree $T$ is not a corolla. As a consequence, the root $r$ is not an element in $Max(\mathcal{T})$ and $Max(\mathcal{T})$ is not empty. First, let $\mathcal{H}_{1}$ be the homotopy bringing the real numbers indexing the vertices in $Max(\mathcal{T})$ to $1$, the real number indexing the root to $0$ and the other real numbers to $1/2$. In other words, $\mathcal{H}_{1}:H(\mathcal{T})\times [0\,,\,1]\rightarrow H(\mathcal{T})$ sends $(\{t_{v}\}\,;\,\{t_{v}^{e}\})\times u$ to the point $(\{H_{\mathcal{T}}(t_{v}\,;\,u)\}\,;\,\{t_{v}^{e}\})$ where
$$
H_{\mathcal{T}}(t_{v}\,;\,u):=
\left\{
\begin{array}{ll}\vspace{3pt}
(1-t_{v})u+t_{v} & \text{if } v\in Max(\mathcal{T}), \\ \vspace{3pt}
(1-u)t_{v} & \text{if } v=r, \\ 
(1/2-t_{v})u+t_{v} & \text{otherwise}.
\end{array} 
\right.
$$
Thereafter, we use the homotopy $\mathcal{H}_{2}$ bringing the real numbers indexing the inner edges of auxiliary trees to $1$. Finally, the homotopy, showing that the space $H(\mathcal{T})$ is contractible, is given by:
\begin{equation}\label{B3}
\begin{array}{rcl}\vspace{3pt}
\mathcal{H}:H(\mathcal{T})\times [0\,,\,1] & \longrightarrow & H(\mathcal{T}); \\ 
(x\,;\,u) & \longmapsto & \left\{
\begin{array}{ll}\vspace{3pt}
\mathcal{H}_{1}(x\,;\,2u) & \text{if } u\leq 1/2, \\ 
\mathcal{H}_{2}(\mathcal{H}_{1}(x\,;\,1)\,;\,2u-1) & \text{if } u\geq 1/2.
\end{array} 
\right.
\end{array} 
\end{equation}
\begin{figure}[!h]
\begin{center}
\includegraphics[scale=0.3]{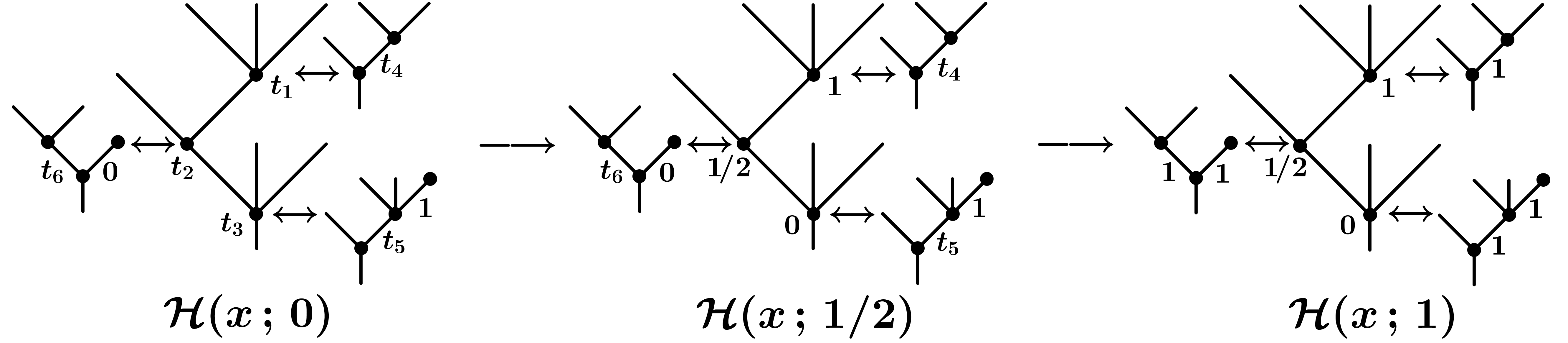}
\caption{Illustration of the homotopy $\mathcal{H}$.}\vspace{-10pt}
\end{center}
\end{figure}

\noindent The restriction of the homotopy $\mathcal{H}$ to the subspace $H^{-}(\mathcal{T})$ is well defined and it proves that $H^{-}(\mathcal{T})$ is also contractible. Thus shows that the inclusion $i:(M\times H)^{-}(\mathcal{T})\rightarrow M(\mathcal{T})\times H(\mathcal{T})$ is a acyclic cofibration and the morphism from $\mathbb{F}^{-}(\mathcal{T})$ to $\mathbb{F}(\mathcal{T})$, which coincides with $\Sigma_{|T|}[i]$, is an acyclic $\Sigma$-cofibration. In particular, it is a weak equivalence.  
\end{proof}

\subsubsection{Construction of the functors $\mathbb{F}^{-}_{0},\,\mathbb{F}_{0}:\mathcal{D}_{k}^{0}[l]\rightarrow Seq$}

By definition, the directed graph $\mathcal{G}_{k}^{0}[l]$ is only composed of the initial elements in $\mathcal{G}_{k}[l]$ and it doesn't have ordered pair of vertices. Consequently, for any object $\mathcal{T}=\llbracket T\,;\,\{T_{v}\}\rrbracket$ in the Reedy category $\mathcal{D}_{k}^{0}[l]$, the sequences $\mathbb{F}^{-}_{0}(\mathcal{T})$ and $\mathbb{F}_{0}(\mathcal{T})$, concentrated in arity $|T|$, are defined as follows:
$$
\mathbb{F}^{-}_{0}(\mathcal{T})= \mathbb{F}^{-}(\mathcal{T}) 
\hspace{15pt} \text{and} \hspace{15pt}
\mathbb{F}_{0}(\mathcal{T})=\mathbb{F}(\mathcal{T}).
$$
The natural transformation $t_{0}:\mathbb{F}^{-}_{0}\Rightarrow \mathbb{F}_{0}$ is induced by the inclusion from $\mathbb{F}^{-}(\mathcal{T})$ to $\mathbb{F}(\mathcal{T})$. Since there is no ordered pair of vertices in $\mathcal{G}_{k}^{0}[l]$, the colimits of the functors $\mathbb{F}^{-}_{0}$ and $\mathbb{F}_{0}$ are just coproduct along the objects in the category $\mathcal{D}_{k}^{0}[l]$. In particular, one has the following statement:

\newpage

\begin{pro}
The natural transformation $t_{0}:\mathbb{F}_{0}^{-}\Rightarrow \mathbb{F}_{0}$ induces a weak equivalence between cofibrant sequences:
$$
\underset{\hspace{20pt}\mathcal{D}_{k}^{0}[l]}{colim}\mathbb{F}_{0}^{-}\longrightarrow\hspace{-4pt} \underset{\hspace{20pt}\mathcal{D}_{k}^{0}[l]}{colim}\mathbb{F}_{0}.
$$
\end{pro}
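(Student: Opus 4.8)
The plan is to reduce everything to objectwise considerations by exploiting that $\mathcal{D}_{k}^{0}[l]$ is a discrete category. Since the directed graph $\mathcal{G}_{k}^{0}[l]$ consists only of the initial vertices of $\mathcal{G}_{k}[l]$ and has no ordered pairs of vertices, the Reedy category $\mathcal{D}_{k}^{0}[l]$ has only identity maps; hence the colimit of a functor on it is the coproduct of its values, every latching object $L_{\mathcal{T}}(-)$ is the initial sequence, and a functor is Reedy cofibrant exactly when each of its values is a cofibrant sequence. So the statement will follow from Theorem \ref{D9} as soon as I check that (a) $t_{0}$ is a Reedy weak equivalence and (b) $\mathbb{F}_{0}^{-}$ and $\mathbb{F}_{0}$ are objectwise cofibrant. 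Point (a) is precisely Proposition \ref{B1}, so the only real work is point (b).

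For (b), I would first observe that $\mathbb{F}(\mathcal{T})=M(\mathcal{T})\times H(\mathcal{T})\times\Sigma_{|T|}$, concentrated in arity $|T|$, with $\Sigma_{|T|}$ acting freely on the last factor (this factor records the leaf-labelling), so that as an object of $Seq$ it is $\Sigma_{|T|}[M(\mathcal{T})\times H(\mathcal{T})]$ placed in arity $|T|$. Since the free functor $\Sigma_{|T|}[-]$ is left Quillen and hence preserves cofibrant objects, it then suffices to see that the space $M(\mathcal{T})\times H(\mathcal{T})$ is cofibrant: the factor $M(\mathcal{T})$ is a finite product of the spaces $V_{|v'|}$, which are cofibrant because $O$ is $\Sigma$-cofibrant, together with finite discrete sets, hence cofibrant; and $H(\mathcal{T})$ is the subcomplex of a finite cube carved out by the inequalities $t_{s(e)}\geq t_{t(e)}$, hence a finite $CW$-complex and in particular cofibrant. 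The same argument, applied to $\mathbb{F}^{-}(\mathcal{T})=\Sigma_{|T|}[(M\times H)^{-}(\mathcal{T})]$, works once one notes — as in the proof of Proposition \ref{B1} — that $M^{-}(\mathcal{T})\rightarrow M(\mathcal{T})$ (using well-pointedness of $O$) and $H^{-}(\mathcal{T})\rightarrow H(\mathcal{T})$ are cofibrations of cofibrant spaces, so $(M\times H)^{-}(\mathcal{T})$ is a pushout of cofibrant spaces along cofibrations and hence cofibrant.

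Having established (a) and (b), I would conclude by applying Theorem \ref{D9}: the colimit of $\mathbb{F}_{0}^{-}$ and the colimit of $\mathbb{F}_{0}$ are coproducts of cofibrant sequences, hence cofibrant, and the Reedy weak equivalence $t_{0}$ induces a weak equivalence between them. (Equivalently, one can argue directly that a coproduct of weak equivalences between cofibrant objects is a weak equivalence.) I expect the only point requiring care — the "hard part", such as it is — to be the bookkeeping around the free $\Sigma_{|T|}$-action: one must use that the $Stab$-construction in the cellular decomposition makes the $\Sigma_{|T|}$-action on $\mathbb{F}(\mathcal{T})$ free, so that $\mathbb{F}(\mathcal{T})$ is genuinely of the form $\Sigma_{|T|}[-]$ applied to a cofibrant space, which is what makes it cofibrant in $Seq$ rather than merely a $\Sigma_{|T|}$-space whose underlying space is cofibrant.
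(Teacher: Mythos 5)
Your proposal is correct and follows essentially the same route as the paper: both reduce to the objectwise statement because $\mathcal{D}_{k}^{0}[l]$ has only identity morphisms, invoke Proposition \ref{B1} for the weak equivalence, and establish $\Sigma$-cofibrancy of $\mathbb{F}_{0}(\mathcal{T})=\Sigma_{|T|}[M(\mathcal{T})\times H(\mathcal{T})]$ and $\mathbb{F}_{0}^{-}(\mathcal{T})=\Sigma_{|T|}[(M\times H)^{-}(\mathcal{T})]$ via cofibrancy of the factors and the pushout description of $(M\times H)^{-}(\mathcal{T})$ (the paper phrases the last step as an application of Proposition \ref{D1}, which is the same pushout-along-cofibrations argument you give).
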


\begin{proof}
Since there is no morphism other than the identity maps in the Reedy category $\mathcal{D}_{k}^{0}[l]$, we only have to check that the morphism $t_{\mathcal{T}}: \mathbb{F}^{-}_{0}(\mathcal{T})\rightarrow \mathbb{F}_{0}(\mathcal{T})$, with $\mathcal{T}$ an object in $\mathcal{D}_{k}^{0}[l]$, is a weak equivalence between cofibrant sequences. Nevertheless, we also know from Proposition \ref{B1} that the morphism $t_{\mathcal{T}}$ is a weak equivalence. So, it is sufficient to show that the two sequences are $\Sigma$-cofibrant.

By definition the spaces $V_{n}$ are cofibrant. So, $M(\mathcal{T})$ is cofibrant as a finite product of cofibrant spaces. Furthermore, $H(\mathcal{T})$ is cofibrant as a $CW$-complex. Consequently, $M(\mathcal{T})\times H(\mathcal{T})$ is a cofibrant space and the sequence $\mathbb{F}_{0}(\mathcal{T})=\Sigma_{|T|}[M(\mathcal{T})\times H(\mathcal{T})]$ is $\Sigma$-cofibrant. 

Similarly, the map from  $M^{-}(\mathcal{T})$ to  $M(\mathcal{T})$ is a cofibration between cofibrant spaces since the operad $O$ is assumed to be well pointed. Moreover, the map $H^{-}(\mathcal{T})\rightarrow H(\mathcal{T})$ is a cofibration between cofibrant spaces as an inclusion of $CW$-complexes. In particular, the maps which compose the following diagram are cofibrations:
$$
\xymatrix@R=15pt{
\emptyset \ar[d] & \emptyset \ar[l] \ar[r] \ar[d] & \emptyset \ar[d] \\
M^{-}(\mathcal{T}) \times H(\mathcal{T}) & M^{-}(\mathcal{T}) \times H^{-}(\mathcal{T}) \ar[r] \ar[l] & M(\mathcal{T}) \times H^{-}(\mathcal{T})
}
$$ 
Proposition \ref{D1}, applied to the above diagram, implies that $(M\times H)^{-}(\mathcal{T})$ is a cofibrant space and the sequence $\mathbb{F}_{0}^{-}(\mathcal{T})=\Sigma_{|T|}[(M\times H)^{-}(\mathcal{T})]$ is $\Sigma$-cofibrant.
\end{proof}

\subsubsection{Construction of the functors $\mathbb{F}^{-}_{i},\,\mathbb{F}_{i}:\mathcal{D}_{k}^{0}[l]\rightarrow Seq$}

The colimit of the functor $\mathbb{F}_{0}$, defined in the previous subsection, is the sub-sequence formed by points in $Y_{k}^{(1)}[l]$ indexed by elements $[T\,;\,\{T_{v}\}]\in \Upsilon_{k}[l]$ in which the auxiliary trees are corollas (i.e. $l-|V(T)|=0$). Roughly speaking, the functor $\mathbb{F}_{1}$ is built such that its colimit is obtained from the colimit of $\mathbb{F}_{0}$ by gluing cells (indexed by element $[T\,;\,\{T_{v}\}]\in \Upsilon_{k}[l]$ satisfying $l-|V(T)|=1$) taking into account the axiom $(iii)$ of Construction \ref{C0}. 

More generally, let us assume that the functors $\mathbb{F}^{-}_{i-1}$, $\mathbb{F}_{i-1}:\mathcal{D}_{k}^{i-1}[l]\rightarrow Seq$ as well as the natural transformation $t_{i-1}:\mathbb{F}_{i-1}^{-}\Rightarrow \mathbb{F}_{i-1}$, with $0<i\leq l-2$, are defined such that the colimits of the functors are $\Sigma$-cofibrant and the morphism induced by $t_{i-1}$ between the colimits is a weak equivalence. If $\mathcal{T}=\llbracket T\,;\,\{T_{v}\}\rrbracket$ is an initial element in the Reedy category $\mathcal{D}_{k}^{i}[l]$, then $\mathcal{T}$ is also an initial element in $\mathcal{D}_{k}^{i-1}[l]$ and we denote by $\mathbb{F}^{-}_{i-1\,|\, \mathcal{T}}$ and $\mathbb{F}_{i-1\,|\, \mathcal{T}}$ the restriction of the functors $\mathbb{F}^{-}_{i-1}$ and $\mathbb{F}_{i-1}$ to the category $\mathcal{D}_{k}^{i-1}[l]_{\mathcal{T}}$ described in Definition \ref{A3}. Finally, the sequences $\mathbb{F}^{-}_{i}(\mathcal{T})$ and $\mathbb{F}_{i}(\mathcal{T})$, concentrated in arity $|T|$, are defined as follows:
$$
\mathbb{F}^{-}_{i}(\mathcal{T})=\left\{
\begin{array}{cl}\vspace{4pt}
\mathbb{F}^{-}(\mathcal{T}) & \text{if } l-|V(T)|\neq 0, \\ 
colim_{\mathcal{D}_{k}^{i-1}[l]_{\mathcal{T}}} \mathbb{F}^{-}_{i-1\,|\, \mathcal{T}} & \text{if } l-|V(T)|= 0,
\end{array} 
\right. \hspace{5pt}\text{and}\hspace{10pt} 
\mathbb{F}_{i}(\mathcal{T})=\left\{
\begin{array}{cl}\vspace{4pt}
\mathbb{F}(\mathcal{T}) & \text{if } l-|V(T)|\neq 0, \\ 
colim_{\mathcal{D}_{k}^{i-1}[l]_{\mathcal{T}}} \mathbb{F}_{i-1\,|\, \mathcal{T}} & \text{if } l-|V(T)|= 0.
\end{array} 
\right.
$$

Let $(\mathcal{T}\,;\,\mathcal{T}')$, with $\mathcal{T}=\llbracket T\,;\,\{T_{v}\}\rrbracket$ and $\mathcal{T}'=\llbracket T'\,;\,\{T'_{v}\}\rrbracket$, be an ordered pair of vertices in $\mathcal{G}^{i}_{k}[l]$. In particular, $\mathcal{T}$ is necessarily an initial element. In some sense, we want to define the sequence $\mathbb{F}_{i}(\mathcal{T}\,;\,\mathcal{T}')$ as the common points between the colimit of $\mathbb{F}_{i-1\,|\, \mathcal{T}}$ and $\mathbb{F}_{i}(\mathcal{T}')$ coming from the axiom $(iii)$ of Construction \ref{C0}. For this purpose we introduce the subspaces  $H^{-}(\mathcal{T}\,;\,\mathcal{T}')$ and $H(\mathcal{T}\,;\,\mathcal{T}')$ formed by points in $H^{-}(\mathcal{T}')$ and $H(\mathcal{T}')$, respectively, having at least one inner edge of an auxiliary tree indexed by $1$. Then, we consider the product space $(M\times H)(\mathcal{T}\,;\,\mathcal{T}')= M(\mathcal{T}')\times H(\mathcal{T}\,;\,\mathcal{T}')$ as well as pushout product
$$
(M\times H)^{-}(\mathcal{T}\,;\,\mathcal{T}')=M^{-}(\mathcal{T}') \times H(\mathcal{T}\,;\,\mathcal{T}') \underset{M^{-}(\mathcal{T}') \times H^{-}(\mathcal{T}\,;\,\mathcal{T}')}{\coprod} M(\mathcal{T}') \times H^{-}(\mathcal{T}\,;\,\mathcal{T}').
$$
Finally, the sequences $\mathbb{F}_{i}^{-}(\mathcal{T}\,;\,\mathcal{T}')$ and $\mathbb{F}_{i}(\mathcal{T}\,;\,\mathcal{T}')$ are defined as follows:
$$
\mathbb{F}_{i}^{-}(\mathcal{T}\,;\,\mathcal{T}')=(M\times H)^{-}(\mathcal{T}\,;\,\mathcal{T}')\times \Sigma_{|T|} 
\hspace{15pt}\text{and}\hspace{15pt} \mathbb{F}_{i}(\mathcal{T}\,;\,\mathcal{T}')=M(\mathcal{T}')\times H(\mathcal{T}\,;\,\mathcal{T}')\times \Sigma_{|T|}.
$$

The morphisms $\mathbb{F}_{i}^{-}(d_{1})$ and $\mathbb{F}_{i}(d_{1})$ are induced by the inclusions $H(\mathcal{T}\,;\,\mathcal{T}')\rightarrow H(\mathcal{T}')$ and $H^{-}(\mathcal{T}\,;\,\mathcal{T}')\rightarrow H^{-}(\mathcal{T}')$, respectively. The morphisms $\mathbb{F}_{i}^{-}(d_{0})$ and $\mathbb{F}_{i}(d_{0})$ consist in cutting an inner edge of an auxiliary tree indexed by $1$. As shown in Figure \ref{A7}, some of the permutation in $Stab(\mathcal{T}'\,;\,v')$ are not necessarily elements in $Stab(\mathcal{T}\,;\,v')$. In that case, we fix it by applying the appropriate non-planar isomorphism. The morphisms $\mathbb{F}_{i}^{-}(d_{0})$ and $\mathbb{F}_{i}(d_{0})$ doesn't depend on the choice of the inner edge indexed by $1$ since $\mathbb{F}_{i}^{-}(\mathcal{T})$ and $\mathbb{F}_{i}(\mathcal{T})$ are obtained as colimits.\vspace{-15pt}

\newpage

\begin{figure}[!h]
\begin{center}
\includegraphics[scale=0.14]{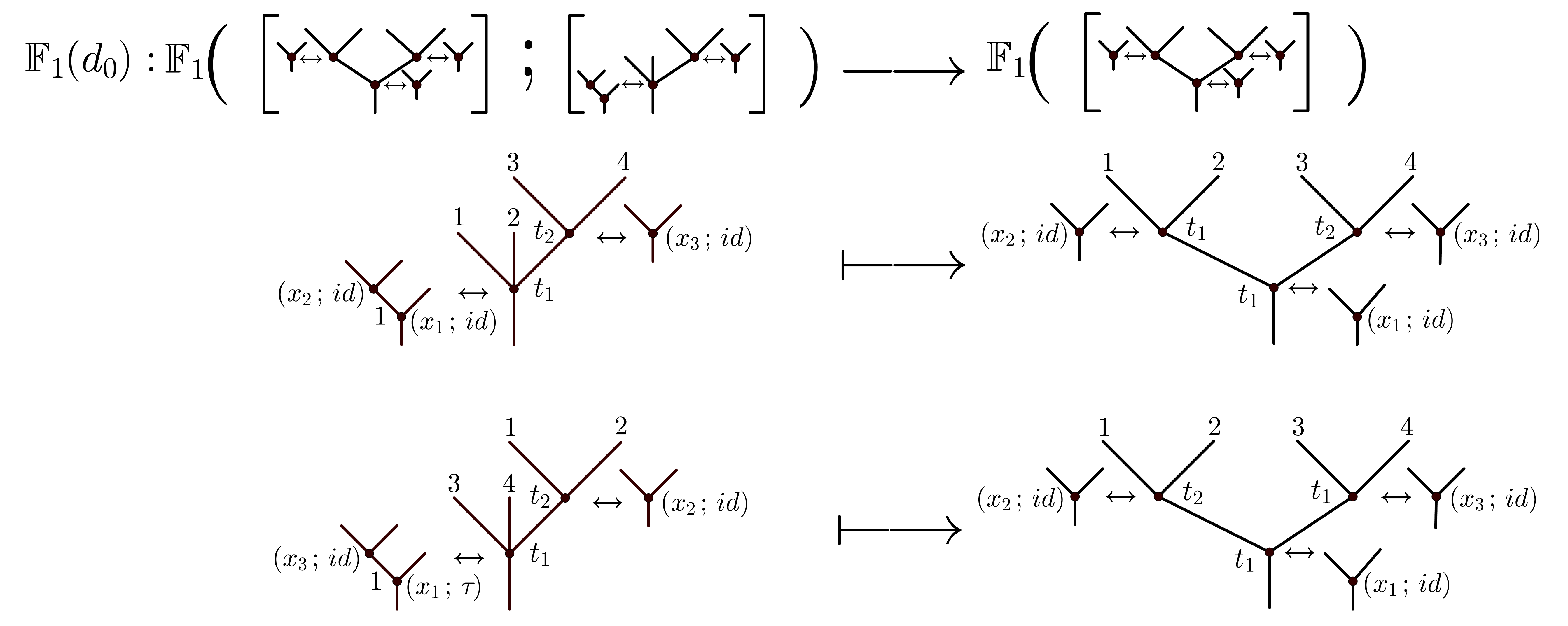}
\caption{Illustration of the morphism $\mathbb{F}_{1}(d_{0})$ with $\tau=(12)\in \Sigma_{2}$.}\label{A7}
\end{center}
\end{figure}

The inclusion $H^{-}(\mathcal{T}\,;\,\mathcal{T}')\rightarrow H(\mathcal{T}\,;\,\mathcal{T}')$ induces a natural transformation $t_{i}:\mathbb{F}_{i}^{-}\Rightarrow \mathbb{F}_{i}$. Finally, let us notice that, as shown in the above picture, the morphisms $\mathbb{F}_{i}(d_{0})$ are not necessarily injective (just take $t_{1}=t_{2}$ in Figure \ref{A7}) and they don't seem to be cofibrations. Nevertheless, one has the following statement:

\begin{pro}\label{B6}
The natural transformation $t_{i}:\mathbb{F}_{i}^{-}\Rightarrow \mathbb{F}_{i}$ induces a weak equivalence between cofibrant sequences:
$$
\underset{\hspace{20pt}\mathcal{D}_{k}^{i}[l]}{colim}\mathbb{F}_{i}^{-}\longrightarrow\hspace{-4pt} \underset{\hspace{20pt}\mathcal{D}_{k}^{i}[l]}{colim}\mathbb{F}_{i}.
$$
\end{pro}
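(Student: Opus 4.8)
The plan is to prove Proposition \ref{B6} by induction on $i$, using Corollary \ref{A8} as the main engine and the fact (established in the preceding subsections) that the statement holds for $i=0$. So I would assume that the induced map $\underset{\mathcal{D}_{k}^{i-1}[l]}{colim}\,\mathbb{F}_{i-1}^{-}\rightarrow \underset{\mathcal{D}_{k}^{i-1}[l]}{colim}\,\mathbb{F}_{i-1}$ is a weak equivalence between $\Sigma$-cofibrant sequences, and more precisely that the restrictions to each sub-Reedy-category $\mathcal{D}_{k}^{i-1}[l]_{\mathcal{T}}$ (for $\mathcal{T}$ initial) have $\Sigma$-cofibrant colimits and induce weak equivalences, since $\mathcal{G}_{k}^{i-1}[l]$ splits into connected components indexed by its initial elements. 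This is what lets us define the values $\mathbb{F}_{i}(\mathcal{T})= colim_{\mathcal{D}_{k}^{i-1}[l]_{\mathcal{T}}}\mathbb{F}_{i-1\,|\,\mathcal{T}}$ at the initial objects and conclude they are $\Sigma$-cofibrant, and that $t_{i}$ restricted to these objects is already a weak equivalence.

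Next I would verify that each connected component $\mathcal{G}_{k}^{i}[l]_{\mathcal{T}}$ is a directed graph of the shape $\mathcal{G}_{n}$ from Figure \ref{A5}: it has a single initial element $\mathcal{T}$ and a finite set of non-initial vertices $\mathcal{T}'$ (those with $l-|V(T')|=i$), each connected to $\mathcal{T}$ by a single ordered pair $(\mathcal{T}\,;\,\mathcal{T}')$, with no other morphisms. Hence $\mathcal{D}_{k}^{i}[l]_{\mathcal{T}}\cong \mathcal{D}_{n}$ for the appropriate $n$, and the colimit over $\mathcal{D}_{k}^{i}[l]$ is the coproduct over initial elements of the colimits over the $\mathcal{D}_{k}^{i}[l]_{\mathcal{T}}$. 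To apply Corollary \ref{A8} componentwise, I must check its hypotheses for $\mathbb{F}_{i}$ and $\mathbb{F}_{i}^{-}$: that the value at the initial object $\mathbb{F}_{i}(\mathcal{T})$ (resp. $\mathbb{F}_{i}^{-}(\mathcal{T})$) is cofibrant, and that each morphism $\mathbb{F}_{i}(d_{1})\colon\mathbb{F}_{i}(\mathcal{T}\,;\,\mathcal{T}')\rightarrow\mathbb{F}_{i}(\mathcal{T}')$ is a $\Sigma$-cofibration. The cofibrancy of $\mathbb{F}_{i}(\mathcal{T})$ comes from the inductive hypothesis; the cofibrancy of the structure maps comes from the fact that $H(\mathcal{T}\,;\,\mathcal{T}')\rightarrow H(\mathcal{T}')$ is an inclusion of $CW$-complexes (the subspace where some auxiliary inner edge is indexed by $1$), hence a cofibration, and similarly for the decorated version; then I invoke Lemma \ref{refA} together with Proposition \ref{D1} exactly as in the $i=0$ case to see that the pushout-product $(M\times H)^{-}(\mathcal{T}\,;\,\mathcal{T}')\rightarrow M(\mathcal{T}')\times H(\mathcal{T}\,;\,\mathcal{T}')$ is a cofibration and that crossing with $\Sigma_{|T|}$ keeps everything $\Sigma$-cofibrant. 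Finally I check that $t_{i}\colon\mathbb{F}_{i}^{-}\Rightarrow\mathbb{F}_{i}$ is a Reedy weak equivalence: on non-initial objects this is Proposition \ref{B1} (applied to $\mathbb{F}^{-}(\mathcal{T}')\rightarrow\mathbb{F}(\mathcal{T}')$, noting the argument localizes to the sub-cell where an auxiliary edge is indexed by $1$ — the contracting homotopy $\mathcal{H}$ of (\ref{B3}) restricts); on the initial object it is the inductive hypothesis; on ordered pairs it is again the localized version of Proposition \ref{B1}.

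With all hypotheses of Corollary \ref{A8} in place on each connected component, the corollary gives that $t_{i}$ induces a weak equivalence $colim_{\mathcal{D}_{k}^{i}[l]_{\mathcal{T}}}\mathbb{F}_{i}^{-}\rightarrow colim_{\mathcal{D}_{k}^{i}[l]_{\mathcal{T}}}\mathbb{F}_{i}$ between cofibrant objects; taking the coproduct over the (finitely many) initial elements $\mathcal{T}$ preserves both $\Sigma$-cofibrancy and weak equivalences, yielding the claim for $i$ and closing the induction. I expect the main obstacle to be the bookkeeping around the stabilizer sets $Stab(\mathcal{T}'\,;\,v')$ versus $Stab(\mathcal{T}\,;\,v')$ flagged in Figure \ref{A7}: one has to check that the maps $\mathbb{F}_{i}(d_{0})$ (cutting an edge indexed by $1$ and re-indexing leaves by the appropriate non-planar isomorphism) are genuinely well defined, compatible across different choices of the $1$-indexed edge, and natural — so that $\mathbb{F}_{i}$ and $\mathbb{F}_{i}^{-}$ really are functors on $\mathcal{D}_{k}^{i}[l]$ — and that the identification of $colim_{\mathcal{D}_{k}^{i}[l]}\mathbb{F}_{l-2}$ with $Y_{k}^{(1)}[l]$ (and the $\mathbb{F}^{-}$ version with $Y_{k}^{(1)}[l]\cap\partial Y_{k}[l]$) correctly encodes axiom $(iii)$ of Construction \ref{C0}. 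The homotopy-theoretic part, by contrast, is essentially a componentwise application of Corollary \ref{A8} and should be routine once the combinatorial setup is pinned down.
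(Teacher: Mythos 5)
Your proposal is correct and follows essentially the same route as the paper: decompose $\mathcal{D}_{k}^{i}[l]$ into its connected components $\mathcal{D}_{k}^{i}[l]_{\mathcal{T}}\cong\mathcal{D}_{n}$, verify the hypotheses of Corollary \ref{A8} componentwise (cofibrancy at the initial object from the inductive step, the $d_{1}$-maps being $\Sigma$-cofibrations via the $CW$-inclusions $H(\mathcal{T}\,;\,\mathcal{T}')\rightarrow H(\mathcal{T}')$ and Proposition \ref{D1}, and the Reedy weak equivalence via Proposition \ref{B1} together with the restriction of the homotopy (\ref{B3})). The bookkeeping issues you flag about $Stab$ and the $d_{0}$-maps are real but are treated by the paper as routine verifications, exactly as you anticipate.
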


\begin{proof}
As mentioned in Definition \ref{A3}, the Reedy category $\mathcal{D}_{k}^{i}[l]$ is composed of connected components $\{\mathcal{D}_{k}^{i}[l]_{\mathcal{T}}\}$ indexed by the initial elements of the directed graph $\mathcal{G}_{k}^{i}[l]$. In order to prove the proposition, we only have to check that the restriction of the natural transformation $t_{i\,|\,\mathcal{T}}:\mathbb{F}_{i\,|\,\mathcal{T}}^{-}\Rightarrow \mathbb{F}_{i\,|\,\mathcal{T}}$ induces a weak equivalence between cofibrant sequences:
$$
\underset{\hspace{20pt}\mathcal{D}_{k}^{i}[l]_{\mathcal{T}}}{colim}\mathbb{F}_{i\,|\, \mathcal{T}}^{-}\longrightarrow\hspace{-4pt} \underset{\hspace{20pt}\mathcal{D}_{k}^{i}[l]_{\mathcal{T}}}{colim}\mathbb{F}_{i\,|\, \mathcal{T}}.
$$

Furthermore, each sub-category $\{\mathcal{D}_{k}^{i}[l]_{\mathcal{T}}$ is of the form $\mathcal{D}_{n}$ with $n$ an integer (see Figure \ref{A5}). In order to apply Corollary \ref{A8} to the natural transformation $t_{i\,|\,\mathcal{T}}$, let us remark that, by assumption, the inclusion from $\mathbb{F}_{i\,|\,\mathcal{T}}^{-}(\mathcal{T})$ to $\mathbb{F}_{i\,|\,\mathcal{T}}(\mathcal{T})$ is a weak equivalence between cofibrant sequences since this map is obtained from the natural transformation $t_{i-1\,|\,\mathcal{T}}$. For any pair of vertices $(\mathcal{T}\,;\,\mathcal{T}')$ in the directed graph $\mathcal{G}_{k}[l]$, Proposition \ref{B1} implies that the inclusion from $\mathbb{F}_{i\,|\,\mathcal{T}}^{-}(\mathcal{T}')$ to $\mathbb{F}_{i\,|\,\mathcal{T}}(\mathcal{T}')$ is a weak equivalence. Moreover, the reader can easily check that the homotopy (\ref{B3}) restricts to the subspaces $H^{-}(\mathcal{T}\,;\,\mathcal{T}')$ and $H(\mathcal{T}\,;\,\mathcal{T}')$. Thus proves that the inclusion from $\mathbb{F}_{i\,|\,\mathcal{T}}^{-}(\mathcal{T}\,;\,\mathcal{T}')$ and $\mathbb{F}_{i\,|\,\mathcal{T}}(\mathcal{T}\,;\,\mathcal{T}')$ is also a weak equivalence. Consequently, the natural transformation $t_{i\,|\,\mathcal{T}}$ is a weak equivalence and we only have to show that the morphisms $\mathbb{F}_{i\,|\,\mathcal{T}}^{-}(d_{1})$ and $\mathbb{F}_{i\,|\,\mathcal{T}}(d_{1})$ are $\Sigma$-cofibrations.

Since the inclusion from $H(\mathcal{T}\,;\,\mathcal{T'})$ to $H(\mathcal{T}')$ is a cofibration as an inclusion of $CW$-complexes, the map $i:M(\mathcal{T'})\times H(\mathcal{T}\,;\,\mathcal{T'})\rightarrow M(\mathcal{T'})\times H(\mathcal{T'})$ is also a cofibration. Consequently, $\mathbb{F}_{i\,|\,\mathcal{T}}(d_{1})=\Sigma_{|T|}[i]$ is a $\Sigma$-cofibration. Similarly, $H^{-}(\mathcal{T}\,;\,\mathcal{T'})\rightarrow H^{-}(\mathcal{T}')$ is a cofibration and the morphisms which compose the following diagram are cofibrations:
$$
\xymatrix{
M^{-}(\mathcal{T}')\times H(\mathcal{T}\,;\,\mathcal{T'}) \ar[d] & M^{-}(\mathcal{T}')\times H^{-}(\mathcal{T}\,;\,\mathcal{T'}) \ar[d]\ar[l] \ar[r] & M(\mathcal{T}')\times H^{-}(\mathcal{T}\,;\,\mathcal{T'}) \ar[d]\\
M^{-}(\mathcal{T}')\times H(\mathcal{T'})  & M^{-}(\mathcal{T}')\times H^{-}(\mathcal{T'}) \ar[l] \ar[r] & M(\mathcal{T}')\times H^{-}(\mathcal{T'})
}
$$ 
Proposition \ref{D1}, applied to the above diagram, implies that the inclusion $i^{-}:(M\times H)^{-}(\mathcal{T}\,;\,\mathcal{T}')\rightarrow (M\times H)^{-}(\mathcal{T}')$  is a cofibration. Consequently, $\mathbb{F}_{i\,|\,\mathcal{T}}^{-}(d_{1})=\Sigma_{|T|}[i^{-}]$ is a $\Sigma$-cofibration. Thus proves the proposition.
\end{proof}  

\newpage

\begin{pro}\label{B4}
There are the following identifications:
$$
Y^{(1)}_{k}[l]\cap \partial Y_{k}[l]=colim_{\mathcal{D}_{k}^{l-2}[l]} \mathbb{F}^{-}_{l-2}
\hspace{15pt}\text{and}\hspace{15pt}
Y_{k}^{(1)}[l]=colim_{\mathcal{D}_{k}^{l-2}[l]} \mathbb{F}_{l-2}.
$$
\end{pro}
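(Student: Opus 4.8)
The plan is to prove the following more precise statement by induction on $i$, for $0\leq i\leq l-2$: the colimit $colim_{\mathcal{D}_{k}^{i}[l]}\mathbb{F}_{i}$ (resp.\ $colim_{\mathcal{D}_{k}^{i}[l]}\mathbb{F}_{i}^{-}$) is the sub-sequence $Y_{k}^{(1)}[l]_{\leq i}$ of $Y_{k}^{(1)}[l]$ (resp.\ the sub-sequence of $Y_{k}^{(1)}[l]\cap\partial Y_{k}[l]$) spanned by the cells $\mathbb{F}(\mathcal{T})$ (resp.\ $\mathbb{F}^{-}(\mathcal{T})$) indexed by those classes $\mathcal{T}=\llbracket T\,;\,\{T_{v}\}\rrbracket$ with $l-|V(T)|\leq i$. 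Since every element of $\phi_{k}[l]$ has a non-corolla main tree, one has $|V(T)|\geq 2$, hence $l-|V(T)|\leq l-2$ for all indexing classes; taking $i=l-2$ then gives exactly the identifications in the statement.

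For the base case $i=0$, the directed graph $\mathcal{G}_{k}^{0}[l]$ has no ordered pair of vertices, so $colim_{\mathcal{D}_{k}^{0}[l]}\mathbb{F}_{0}$ is the coproduct of the cells $\mathbb{F}(\mathcal{T})$ over the level-zero classes (those whose auxiliary trees are corollas). All the symmetric group axioms of Constructions \ref{e7} and \ref{C0} have already been absorbed into the cells $\mathbb{F}(\mathcal{T})$ through the free functors $\Sigma_{|T|}[-]$ and the chosen representative sets $Stab(\mathcal{T}\,;\,v')$, and no axiom-$(iii)$ identification of Construction \ref{C0} can relate two level-zero points since contracting an inner edge of the main tree strictly increases $l-|V(T)|$. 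Hence this coproduct is precisely the level-zero part of $Y_{k}^{(1)}[l]$, and likewise for $\mathbb{F}_{0}^{-}$.

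For the inductive step, assume the statement at level $i-1$. By definition, for an initial class $\mathcal{T}$ one has $\mathbb{F}_{i}(\mathcal{T})=colim_{\mathcal{D}_{k}^{i-1}[l]_{\mathcal{T}}}\mathbb{F}_{i-1\,|\,\mathcal{T}}$, which by the inductive hypothesis is the part of $Y_{k}^{(1)}[l]_{\leq i-1}$ lying in the connected component of $\mathcal{T}$; since $\mathcal{D}_{k}^{i}[l]$ is the disjoint union of its components, gluing these reassembles $Y_{k}^{(1)}[l]_{\leq i-1}$. The colimit over $\mathcal{D}_{k}^{i}[l]$ then attaches, for each class $\mathcal{T}'$ with $l-|V(T')|=i$, the cell $\mathbb{F}(\mathcal{T}')$ along the pair objects $(\mathcal{T}\,;\,\mathcal{T}')$: the map $\mathbb{F}_{i}(d_{1})$ is the inclusion of the sub-locus where at least one inner edge of an auxiliary tree carries the label $1$, while $\mathbb{F}_{i}(d_{0})$ cuts such an edge and lands in a level-$(i-1)$ cell inside $\mathbb{F}_{i}(\mathcal{T})$. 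The key point is that this attaching datum is exactly the axiom-$(iii)$ identification of Construction \ref{C0}: contracting a main-tree inner edge $e$ with equal endpoint labels $t_{t(e)}=t_{s(e)}$ merges its two endpoints into a single vertex whose $\mathcal{BV}(O)$-label is the operadic composite of the two old labels, i.e.\ the grafted auxiliary tree with the new inner edge labelled $1$; thus the sub-locus $\{t_{t(e)}=t_{s(e)}\}$ of $\mathbb{F}(\mathcal{T}')$ is carried homeomorphically onto the sub-locus of the contracted cell where that new auxiliary edge equals $1$, and this is inverse to the cutting move defining $\mathbb{F}_{i}(d_{0})$. Moreover the side conditions match: Construction \ref{C0}$(iii)$ is imposed in $Y_{k}^{(1)}[l]$ only when $|V(T')|>2$, which is exactly the requirement that the contracted main tree still be non-corolla, and on the other side $\mathbb{F}_{i}(d_{0})$ always lands in a cell indexed by an element of $\phi_{k}[l]$, whose main tree is non-corolla by definition; independence of $\mathbb{F}_{i}(d_{0})$ of the chosen auxiliary edge labelled $1$ holds because its target $\mathbb{F}_{i}(\mathcal{T})$ is itself a colimit in which the level-$(i-1)$ cells are already glued consistently. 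Putting these together, $colim_{\mathcal{D}_{k}^{i}[l]}\mathbb{F}_{i}$ is obtained from $Y_{k}^{(1)}[l]_{\leq i-1}$ by adjoining the level-$i$ cells along precisely the axiom-$(iii)$ relations, which is the description of $Y_{k}^{(1)}[l]_{\leq i}$. The same reasoning applies verbatim to $\mathbb{F}_{i}^{-}$, since the pushout-product cells $\mathbb{F}^{-}(\mathcal{T})$ are the boundary part of $\mathbb{F}(\mathcal{T})$ (a bivalent vertex labelled by $\ast_{1}$, an auxiliary edge labelled $0$, or a main-tree vertex labelled $0$ or $1$) and all the gluing maps $d_{0}$, $d_{1}$ preserve these conditions.

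The step I expect to require the most care is the verification, inside the inductive step, that $\mathbb{F}_{i}(\mathcal{T})$ really is the \emph{whole} level-$\leq i-1$ sub-sequence over the component of $\mathcal{T}$ and nothing more; equivalently, that the equivalence relation generated on $\coprod_{\mathcal{T}}\mathbb{F}(\mathcal{T})$ by the axiom-$(iii)$ moves is confluent, so that each class has a unique representative of minimal level and the natural maps from the partial colimits are injective. This follows from the combinatorics of trees --- un-contracting along auxiliary edges labelled $1$ can be performed in any order with the same outcome and never creates new auxiliary edges labelled $1$, so the process terminates at a well-defined normal form of minimal level --- but it is the point where the bookkeeping is delicate; once it is granted, everything else is a matter of unwinding the definitions of the functors $\mathbb{F}_{i}$, $\mathbb{F}_{i}^{-}$ and of the sequences $Y_{k}^{(1)}[l]$ and $\partial Y_{k}[l]$.
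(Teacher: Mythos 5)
Your proposal is correct and follows essentially the same route as the paper: the paper's own proof simply asserts that the representative choices and label restrictions encode the symmetric-group axiom, that the colimit identifications encode axiom $(iii)$ of Construction \ref{C0}, and leaves the verification that the induced maps are homeomorphisms to the reader. Your level-by-level induction, the matching of the cutting/contraction moves, and the confluence remark are exactly the content of that omitted check, worked out explicitly.
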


\begin{proof}
The choice of a representative element for each class $\llbracket T\,;\,\{T_{v}\}\rrbracket$ and the restriction on the space of labels $M(\llbracket T\,;\,\{T_{v}\}\rrbracket)$ for the construction of the colimit $\mathbb{F}_{i}^{-}$ and $\mathbb{F}_{i}$ are equivalent to the symmetric group axiom $(ii)$ of Construction \ref{C0}.  Similarly, the equivalence relation induced by the colimits is equivalent to the axiom $(iii)$ of Construction \ref{C0}. Finally, the reader can check that the inclusions from the colimits to the sequences induce homeomorphisms. 
\end{proof}

\begin{proof}[Proof of Theorem \ref{B5}] 
As seen in Section \ref{H7}, the map $\xi_{k}[l]$ is a weak equivalence if the map of sequences from $Y^{(1)}_{k}[l]\cap \partial Y_{k}[l]$ to $Y_{k}^{(1)}[l]$ is a homotopy equivalence. Since all the objects in the categories considered are fibrant, it is sufficient to prove that the map of sequences is a weak equivalence between cofibrant sequences. In Proposition \ref{B4}, we express these two sequences in terms of colimits using the Reedy category $\mathcal{D}_{k}^{l-2}[l]$:
\begin{equation}\label{H9}
Y^{(1)}_{k}[l]\cap \partial Y_{k}[l]=colim_{\mathcal{D}_{k}^{l-2}[l]} \mathbb{F}^{-}_{l-2}\longrightarrow colim_{\mathcal{D}_{k}^{l-2}[l]} \mathbb{F}_{l-2}=Y_{k}^{(1)}[l].
\end{equation}
More precisely, the above map arises from the natural transformation $t_{l-2}:\mathbb{F}_{l-2}^{-}\Rightarrow \mathbb{F}_{l-2}$. Nevertheless, we prove in Proposition \ref{B6} that the map of sequences induced by the natural transformation $t_{l-2}$ is a weak equivalence between cofibrant sequences. Thus proves that $\xi_{k}[l]$ is a weak equivalence.
\end{proof}

\begin{merci}
I would like to thank Muriel Livernet for her help in preparing this paper. I also wish to express my gratitude to Victor Turchin and Thomas Willwacher  for many helpful comments. I am also grateful to the Max Planck Institute and the ETH Zurich for their hospitality and their financial support.
\end{merci}

\bibliographystyle{amsplain}
\bibliography{bibliography}

\end{document}